\title[The versal deformation of conic bundle manifolds]{The versal deformation of small resolutions of conic bundles over $\P^1\times \P^1$ with two sections blown down}
\author{Bernd Kreu\ss ler}
\address{Mary Immaculate College,
South Circular Road,
Limerick,
V94 VN26,
Ireland}
\email{bernd.kreussler@mic.ul.ie} 
\author{Jan Stevens}
\address{Matematiska vetenskaper, G\"oteborgs universitet 
och Chalmers tekniska h\"ogskola, 41296
G\"oteborg, Sweden}
\email{stevens@chalmers.se}
\dedicatory{Dedicated to Herbert Kurke on the occasion of his 85$^{\text{th}}$
  birthday.} 
\def\KLB{SRCB }
\def\wt#1{\widetilde{#1}}
\def\wh#1{\widehat{#1}}
\def\wl#1{\overline{#1}}
\def\cal{\mathcal}
\def\Bbb{\mathbb}
\def\sier#1{{\cal O}_{#1}}
\def\C{{\Bbb C}} 
\def\P{{\Bbb P}} 
 \def\F{{\Bbb F}}
\def\cw{{\cal W}} \def\ce{{\cal E}}
 \def\cv{{\cal V}}
\def\cy{{\cal Y}} \def\cR{{\cal R}}
\def\ct{{\cal T}} \def\cz{{\cal Z}}
 \def\ci{{\cal I}}
\def\cn{{N}}
 \def\cs{{\cal S}}
\def\co{{\cal O}} \def\cx{{\cal X}}
\def\ep{\varepsilon}
\def\vp{\varphi} \def\la{\lambda}
\def\al#1{\alpha_{#1}}
\def\Rank{\mathop{\rm Rank}}
\def\End{\mathop{\rm End}}
\def\Def{\mathop{\rm Def}}
\def\Pic{\mathop{\rm Pic}}
\def\Im{\mathop{\rm Im}}
\def\Sym{\mathop{\rm Sym}}
\def\pr#1{\mathop{\rm pr}_{#1}}
\def\lra{\longrightarrow}
\def\mapright#1{\mathrel{%
\smash{\mathop{\longrightarrow}\limits^{#1}}}}
\def\mapdown#1{\Big\downarrow
 \rlap{$\vcenter{\hbox{$\scriptstyle #1$}}$}}
\def\semic{\mathrel{;}}
\newcommand{\D}{\mathrm{d}}
\newtheorem{theorem}{Theorem}[section]
\newtheorem*{theorem*}{Theorem}
\newtheorem{proposition}[theorem]{Proposition}
\newtheorem{lemma}[theorem]{Lemma}
\theoremstyle{definition}
\newtheorem*{defn}{Definition}
\theoremstyle{remark}
\newtheorem{remark}[theorem]{Remark} 
\newtheorem{example}[theorem]{Example}
\newtheorem*{notation}{Notational convention}
\def\FuncX{(4*t^4+7*t^3-2*t)/(4+7*t-2*t^3)}
\def\FuncY{(2*t^4-7*t^2-4*t)/(2*(t+1)^2*(t+2)^2*(2*t+1)^2)}
\pgfplotsset{every axis plot/.append style={line width=0.8pt,samples=50}}
\begin{document}
\begin{abstract}
  Twistor spaces are certain compact complex threefolds with an additional
  real fibre bundle structure.
  We focus here on twistor spaces over $\P^2\#\P^2\#\P^2$.
  Such spaces are either small resolutions of
  double solids or they can be described as modifications of conic
  bundles. The last type is the more special one: they deform into
  double solids. We give an explicit description of this deformation,
  in a more general context.
\end{abstract}

\subjclass{Primary  32L25, Secondary  32J17; 14J30; 32G05; 14D20}

\keywords{Twistor spaces, deformations of manifolds, quartics with double points}

\maketitle

\section*{Introduction}
This paper describes the deformation of a special non-projective complex
manifold to another one, which is also Moishezon, but is given by different equations.
Such manifolds firstly occurred as twistor spaces. 

The original Penrose twistor correspondence interprets four-dimensional Minkowski space as
a space of lines in complex projective three-space.
The twistor construction in the context of Riemannian geometry was first
worked out in detail by Atiyah, Hitchin and Singer \cite{ahs}.
They showed that the conformal structure of a self-dual four-dimensional Riemannian manifold
determines a three-dimensional complex manifold, the twistor space, from which
the conformal structure can be reconstructed with the aid of an
anti-holomorphic involution (real structure) and a covering family of ``lines''.
For an introduction from an algebro-geometric point of view, see LeBrun's
survey \cite{lb3}.

Most twistor spaces are not projective: only $\P^{3}$ and the flag variety $\F(1,2)$ are K\"ahler
twistor spaces, as  shown by Hitchin \cite{h1}, and independently by Friedrich and Kurke \cite{fk}.
Indeed, the K\"ahler condition implies that the self-dual metric on the Riemannian four-manifold has 
positive scalar curvature; moreover, it can only be $S^4$, $\P^2$, $\P^2\#\P^2$ or $\P^2\#\P^2\#\P^2$. 
The twistor space can only be projective 3-space, the flag variety,
the intersection of two quadrics or a double solid with quartic branch surface, and the last two are excluded,
because of their Euler characteristic. Poon \cite{po86} showed that a twistor space with
real fibre bundle structure over $\P^2\#\P^2$ is a non-projective small resolution of a singular
intersection of two quadrics; there is a $1$-parameter
family of such twistor spaces.

The existence of twistor spaces over  $\P^2\#\cdots\#\P^2$, the connected sum of $n\ge 3$
copies of $\P^{2}$ considered as a real $4$-manifold, was for all $n$ shown by Donaldson--Friedman \cite{df}
with a deformation argument, using $d$-semistable reducible spaces. 
For $n\ge 4$, twistor spaces do no longer need to be Moishezon. LeBrun \cite{lb1} constructed 
by more elementary methods explicit metrics for all values of $n$. The associated twistor spaces
are called LeBrun twistor spaces. They are Moishezon, and are bimeromorphic
to a small resolution of a certain conic bundle over the quadric
$Q=\P^1 \times \P^1$ \cite{lb1}.

In the case $n=3$, all twistor space are Moishezon, but the LeBrun twistor spaces
are not the most general. Also small resolutions of double solids, branched over a singular
quartic, are twistor spaces, by the work of Poon (see  \cite{po92}) and of Kurke and the first Author
\cite{krs,k-k}. In the course of this work, Kurke realised that not only double solids but also 
modifications of conic bundles are candidates for twistor spaces (see \cite[Theorem 2.3]{krs}).
From a general deformation theory argument \cite{df}, 
it was clear that there must be a deformation of a LeBrun twistor space into a small resolution 
of a double solid.
The purpose of this paper is to explain in detail how this can be done.
This answers a question asked by Herbert Kurke at the Berlin--Hamburg--Oslo
Seminar on Singularities which was hosted by Arnfinn Laudal on 25--27 October
1991 in Oslo.

During the past two decades, Honda has shed much light on the structure of
twistor spaces over $\P^2\#\cdots\#\P^2$, see, for example,
\cite{ho00}, \cite{ho}, \cite{ho2}, \cite{ho3}, \cite{ho09}, \cite{ho10},
\cite{ho14}, \cite{ho15-1}, \cite{ho15-2}. 
For example, for $n>4$, the general twistor space has algebraic dimension $0$.
Many examples of twistor spaces of algebraic dimension $1$ or $3$ have been
described explicitly for all $n\ge4$.
Examples of algebraic dimension $2$ are known \cite{ck99}, \cite{ho00},
\cite{ho06} for $n=4$, but remain mysterious for $n>4$, see \cite{hok}.

Recently, in \cite{ho23}, Honda proved the astonishing result that a
Moishezon twistor space over $\P^2\#\cdots\#\P^2$, whose fundamental linear
system is a pencil, either is bimeromorphic to a conic bundle over a possibly
singular surface or to a branched double cover over a three-dimensional
variety. This manifests in full generality the dichotomy we have seen in case
$n=3$.
Our work is a first step towards understanding how Moishezon
twistor spaces of one type deform into the other.


Before stating our results in  more detail, we recall the description
of LeBrun twistor spaces over  $\P^2\#\cdots\#\P^2$. 
Such a space  is bimeromorphic
to a small resolution of a certain conic bundle over the quadric
$Q=\P^1 \times \P^1$ \cite{lb1}, see also \cite{ku}. Let $\vp_1$,
\dots, $\vp_n$ be bihomogeneous forms, defining smooth curves of
type $(1,1)$ on $Q$. Consider the hypersurface $W$ in the $\P^2$-bundle
$\P(\sier Q\oplus \sier Q (1-n,-1)\oplus \sier Q(-1,1-n))$ over $Q$ with
bihomogeneous equation 
\begin{equation}\label{conic}
 w_1w_2-\vp_1\cdots\vp_n w_0^2=0 \;.
\end{equation}
The singular points of $W$ lie over the intersection points of the
curves $\vp_i$ in  $w_1=w_2=0$. Let $\wt W$ be a suitably
chosen small resolution of
$W$ (the resolution has to be compatible
with the real structure on $W$; only one choice  leads 
to a twistor space, see \cite{lb1} and \cite{ku} for details). The sections $E_1\colon
w_0=w_1=0$ and $E_2\colon w_0=w_2=0$ in $W$ do not pass through the
singular locus, and therefore, they can also be considered as divisors
in $\wt W$. Each can be blown down  to a curve, along opposite
rulings of the quadric, both in $W$ and $\wt W$. As a result, we
get a singular threefold $Z$ and its small resolution $\wt Z$, the
LeBrun twistor space.

As the real structure and the twistor lines play no role in our main
construction, we can consider more general discriminant curves, only
subject to the condition that the singularities of $W$ admit a small
resolution. In particular, we can replace the product
$\vp_1\cdots\vp_n$ by any non-singular form of type $(n,n)$. We call a
smooth space obtained by blowing down the two sections $E_1$, $E_2$
an \emph{\KLB manifold}\footnote{\KLB stands for \textbf{s}mall
  \textbf{r}esolution of a \textbf{c}onic \textbf{b}undle}. 
In this Introduction, we mainly restrict ourselves to the case of the product
$\vp_1\cdots\vp_n$, with $n=3$.

The general twistor space over $\P^2\#\P^2\#\P^2$ is a small
resolution of a double cover of $\P^3$, branched along a quartic
surface with 13 ordinary double points \cite{po92,k-k}, with equation
of the form
\begin{equation}\label{double}
Q^2-L_1L_2L_3L_4=0\;.
\end{equation}
A similar phenomenon, of two families, occurs for $K3$-surfaces with
a polarisation of degree $2$ \cite{sh}. The general one is a double
cover of $\P^2$, branched along a sextic curve, but it can also
happen that the linear system only maps the surface to a conic.
Twice the linear system exhibits such surfaces as double covers of
the projective cone over the rational normal curve of degree $4$.
This cone deforms to the Veronese surface, which is an embedding of
$\P^2$. Similarly, the Veronese embedding of $\P^3$ in $\P^9$ is a
deformation of the projective cone over $\P^1\times\P^1$, embedded
with $\co(2,2)$ in $\P^8\subset\P^9$. 

The previous observation was the
starting point for our investigations. The first problem one runs
into is that the conic bundle involves three planes, whereas the
double solid has four linear forms in its equation, each giving rise
to two surfaces of degree $1$ on the double cover (here degree is
measured by intersecting with twistor lines), and these are the
only surfaces of degree $1$. The conic bundle twistor spaces have
two pencils of surfaces of degree $1$ (the inverse images of the
rulings on the quadric), so most surfaces do not survive under the
deformation. The solution to this problem is to take two surviving
divisors as additional information, and consider the deformation
theory of the manifold $\wt Z$ together with two divisors $\wt S_1$,
$\wt S_2$, which are the strict transforms of the  inverse image of
two intersecting lines on $Q$. To specify the two surfaces, we just
give a tangent plane to the quadric, thereby introducing a fourth
plane; it intersects the quadric in two lines.

Instead of working with double covers, we extend the $\P^2$-bundle
over $\P^3$. This is possible if we add the two lines to the
discriminant curve, which then becomes a curve of degree $(4,4)$.
The bundle is
\[
\P(\co\oplus \co(-2)\oplus \co(-2))\rightarrow \P^{3}\;.
\]

We connect conic bundles of the form \eqref{conic} (with $n=3$) and
double solids of the form \eqref{double} by a family  $\cy$ with
equations
\begin{equation}\label{family}
\begin{split}
y_{1}y_{2}-L_1L_2L_3L_4y_{0}^{2}&=0\;,\\
\al2y_{1}+\al1y_{2}-Qy_{0}&=0\;,
\end{split}
\end{equation}
where $L_1$, $L_2$ and $L_3$ are linear forms on $\P^3$, which
restrict on the quadric to $\vp_1$, $\vp_2$ and $\vp_3$, respectively,
and $L_4$ depends on the product $\al1\al2$ in a way to be
specified; for $\al1\al2=0$, it gives the mentioned tangent plane,
whereas for $\al1\al2\neq0$, there should be a 13th singular point.
The general fibre is indeed a double solid, for if $\al1\al2\neq0$
we can eliminate,  say $y_1$,  and find the double cover of $\P^3$,
branched along the quartic with equation
\[
Q^{2}-4\al1\al2L_1L_2L_3L_4=0\;.
\]
Equations \eqref{family}
are the correct equations to write down, but to obtain a deformation
of an \KLB manifold, we need to change the family by
birational transformations.
The central fibre of the family $\cy$  is reducible, with the sections
$y_0=y_1=0$ and $y_0=y_2=0$ as extra components. Furthermore, the
remaining component is only birational to a conic bundle with
discriminant curve of degree $(3,3)$. Fortunately, we have here a
case of two wrongs making one right.

The rational map  to a conic bundle can be
factored as a small resolution of the singularities introduced by
the tangent plane $L_4$ (seven ordinary double points for a
tangent plane in general position) and the fibre-wise blow down of
two ruled surfaces. We realise this factorisation, and
the subsequent map to an \KLB manifold, by a sequence of
birational transformations of the total space:
\[
 \cy \longleftarrow \cy^-
\dashrightarrow\cy^+ \lra \cz
\longleftarrow \wt\cz\;.
\]
The morphism $\cy^-\to \cy$ is the simultaneous small  resolution of
seven isolated singularities of the fibres of the family. Each ruled
surface survives over one of the divisors $\alpha_i=0$, with normal
bundle in the total space restricting
to each line of the ruling as $\co(-1)\oplus\co(-1)$. Therefore, all
the lines can be simultaneously flopped in the total space. The flop 
$\cy^-\dashrightarrow\cy^+$ induces a
blowing down in the components in which the surfaces lie. On the
other hand, it gives a blow-up of the extra components, which 
become relatively exceptional, and can be contracted to lines,
by the Castelnuovo--Moishezon--Nakano criterion.
This is achieved by  the morphism $\cy^+\to\cz$. Finally, one
resolves the remaining singularities. The manifold $\wt\cz$ is the
total space of a deformation of the original  \KLB manifold.
By varying the coefficients of $L_1, L_2, L_3, L_4$ in an appropriate way, we
obtain a family $\wt\cz\rightarrow\Pi$. Our main result is the
following.

\begin{theorem*}
The fibres of the so-constructed family $\wt\cz\rightarrow\Pi$ over
$\al1=\al2=0$  are \KLB manifolds, and the deformation
$\wt\cz\rightarrow\Pi$ is for small $\al1,\al2$
versal for deformations of triples $(\wt
Z,\wt S_1,\wt S_2)$.
\end{theorem*}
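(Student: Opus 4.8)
The plan is to prove the two assertions separately: first that the central fibres are Kurke--LeBrun manifolds, and then versality of the family of triples. The first part is essentially a matter of tracing through the birational factorisation $\cy \leftarrow \cy^- \dashrightarrow \cy^+ \to \cz \leftarrow \wt\cz$ over the locus $\al1=\al2=0$. When $\al1=\al2=0$, the second equation of \eqref{family} forces $Qy_0=0$, and on the branch $Q=0$ one is left with $y_1y_2-L_1L_2L_3L_4y_0^2=0$ restricted to the quadric $Q=0$; since $L_1,L_2,L_3$ restrict to $\vp_1,\vp_2,\vp_3$ and $L_4$ restricts to a product of two $(1,1)$-forms (the two lines cut out by the tangent plane), this is exactly a conic bundle of the form \eqref{conic} with discriminant of type $(3,3)$ after the blow-downs are accounted for. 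I would check that the prescribed small resolution $\cy^-\to\cy$, the flop, the contraction $\cy^+\to\cz$, and the final resolution $\wt\cz\to\cz$ restrict over the central locus to precisely the construction of $\wt Z$ together with the divisors $\wt S_1,\wt S_2$ described in the Introduction, namely the strict transforms of the inverse images of the two intersecting lines.

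For versality the strategy is the standard infinitesimal criterion: an analytic deformation over a base $\Pi$ (smooth, or at worst a complete intersection) with central fibre a triple $(\wt Z,\wt S_1,\wt S_2)$ is versal if and only if the Kodaira--Spencer map $T_0\Pi \to T^1_{(\wt Z,\wt S_1,\wt S_2)}$ is surjective and $T^1$ is finite-dimensional of the expected rank (so that a dimension count upgrades the differential statement to versality; if obstructions are present one needs $\Pi$ to be cut out in a smooth space by the obstruction map). So the work splits into: (a) compute $T^1_{(\wt Z,\wt S_1,\wt S_2)}$, the tangent space to deformations of the triple, and ideally show $T^2=0$ so that the deformation functor is smooth and $\dim\Pi$ equals $\dim T^1$; (b) identify $\dim\Pi$ with the number of effective parameters in $L_1,L_2,L_3,L_4,Q,\al1,\al2$ modulo the automorphisms used to normalise the construction; (c) compute the Kodaira--Spencer map of $\wt\cz\to\Pi$ and show it is an isomorphism (equivalently surjective) at a central point.

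For step (a) I would use the long exact sequence relating deformations of the pair $(\wt Z, \wt S_1+\wt S_2)$ — or of the triple, keeping the two components separate — to $H^1(\wt Z,\Theta_{\wt Z})$, $H^1(\wt S_i,\Theta_{\wt S_i})$ and the normal-bundle cohomology $H^0(\wt S_i, N_{\wt S_i/\wt Z})$, via the sheaf $\Theta_{\wt Z}(-\log(\wt S_1+\wt S_2))$ of logarithmic vector fields. Since $\wt Z$ is birationally a conic bundle over $Q=\P^1\times\P^1$ and the $\wt S_i$ are the (strict transforms of the) pullbacks of the two rulings, these groups should be computable by pushing forward to $Q$ and using the explicit equation \eqref{conic}; the dependence on $n=3$ and on the positions of the curves $\vp_i$ enters here. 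The key identification is that moving the $\vp_i$ and moving the tangent plane $L_4$ exhausts $T^1$ of the triple — the role of $\wt S_1,\wt S_2$ is precisely to rigidify the "choice of ruling" directions that would otherwise make $H^1(\Theta_{\wt Z})$ too big (or too small in the wrong directions) for a clean versality statement, as explained in the Introduction.

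The main obstacle I expect is step (c) combined with the vanishing of $T^2$: one must show that the abstract deformation carried by $\wt\cz\to\Pi$ is not merely tangent to the versal family but realises it, and this requires controlling how the birational transformations behave in families — in particular that the flop $\cy^-\dashrightarrow\cy^+$ and the Castelnuovo--Nakano contraction $\cy^+\to\cz$ remain valid over a full neighbourhood of the central locus (not just fibrewise), so that $\wt\cz$ is genuinely flat over $\Pi$ with the right fibres. Establishing that the seven nodes introduced by $L_4$ admit the simultaneous small resolution in the family, that the flopped surfaces have the stated normal bundle $\co(-1)\oplus\co(-1)$ on each ruling line throughout the family, and that the relative contraction criterion applies uniformly, is the technical heart; once that is in place, surjectivity of Kodaira--Spencer follows by exhibiting enough first-order deformations (varying $L_i$ and the tangent plane) whose images span $T^1$, and the dimension count from (a)–(b) closes the argument.
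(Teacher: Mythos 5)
Your overall framework---trace the birational factorisation over $\al1=\al2=0$ to identify the central fibres, then prove versality by combining smoothness of the deformation functor of the triple, smoothness of the base $\Pi$, and surjectivity of the Kodaira--Spencer map---matches the paper's strategy, and your parts (a) and (b) correspond to what the paper does in Theorem \ref{raakzss} (via the deformation theory of the map $\wt S_1\amalg\wt S_2\to\wt Z$ rather than log vector fields, an inessential difference) and in Lemma \ref{lem:smooth}. But step (c) as you state it contains a genuine gap: you propose to establish surjectivity of the Kodaira--Spencer map ``by exhibiting enough first-order deformations (varying $L_i$ and the tangent plane) whose images span $T^1$.'' They do not span $T^1_f$. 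Varying $L_1,L_2,L_3$ moves the discriminant curve within its stratum, and varying the tangent plane $L_4$ moves the pair of lines on $Q$; both keep the fibre a Kurke--LeBrun triple. If these directions alone spanned $T^1_f$, the Kurke--LeBrun locus would be open in the deformation space of the triple and no deformation into a double solid could exist, contradicting the purpose of the whole construction. In fact the image of these directions has codimension exactly $2$ in $T^1_f$: by the diagram \eqref{comdia} and the sequence \eqref{thetawz}, what is missed is precisely the two-dimensional kernel of $H^1(\cn_{\wt C_1})\oplus H^1(\cn_{\wt C_2})\to H^1(\cn_{\wt S_1}|_{\wt C_1})\oplus H^1(\cn_{\wt S_2}|_{\wt C_2})$, i.e.\ $H^1(\cn_{\wt C_1/\wt S_1})\oplus H^1(\cn_{\wt C_2/\wt S_2})$.

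The missing idea---and the technical heart of the paper's proof---is the computation of the Kodaira--Spencer images of the two transverse directions $\partial/\partial\al1$, $\partial/\partial\al2$, and the proof that they span this two-dimensional complement. For a one-parameter family $T\subset\Delta_2$ one computes the connecting homomorphisms of the normal bundle sequences of the two curves $\wt C_{1,p}$, $\wt C_{2,p}$ in the total space $\cz_T$: since $C_2$ sits in a $\P^1$-bundle over $\Delta_2$, one gets $N_{C_{2,p}/\cz_T}\cong\co\oplus\co(-2)\oplus\co(-2)$ and the connecting map is zero; whereas $C_{1,p}$ arises by contracting the $\P^2$-bundle $\ce_1^+|_p\cong\P(\co\oplus\co(-1)\oplus\co(-1))$, which forces $N_{C_{1,p}/\cz_T}\cong\co(-2)\oplus\co(-1)\oplus\co(-1)$ and a nontrivial connecting map. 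The symmetric statement for $\Delta_1$ then shows the two $\al{i}$-directions span the kernel. Without an argument of this kind (which uses the detailed structure of the contractions $\cy^+\to\cz$, not just their existence) the surjectivity claim, and hence versality, is not established. Your worries about flatness and the uniform validity of the flop and contraction in families are legitimate but are resolved in the construction itself, prior to the proof of the theorem.
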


We prove the theorem (as stated) for  
general \KLB manifolds, that is,
for general discriminant curves. Also, there are almost no
restrictions on the position of the tangent plane. To prove
versality, we need to know the spaces of infinitesimal deformations
and obstructions. We compute them in the needed generality.
With no extra effort, this can be done for general $n$. In the
twistor case, the computations were performed by LeBrun \cite{lb2},
under the additional assumption that all singularities are of type
$A_1$. We follow the  same strategy.

We spend some time to define the family $\wt\cz\rightarrow\Pi$,
by specifying a family $\cy\rightarrow\Pi$, making 
the above statement, that we vary the occurring  coefficients 
in an appropriate way, more precise. The problem is that the base space
$\Pi$ can in general only be given implicitly. 
In the twistor case, we can give a rather explicit
description. In two other cases, we have been able to
give precise formulas, namely if the general fibre is 
a double solid, branched along a 14-nodal quartic, or along a
Kummer surface.

It is known that a small real deformation 
of a LeBrun twistor space has again the structure of a twistor space.
Our formulas simplify if the
LeBrun twistor space has a torus action. Such twistor spaces
and their deformations  have been
studied by Honda  \cite{ho,ho2,ho3}.
Therefore, our construction gives a new proof of a result of
Honda \cite[Theorem 2.1]{ho4}, which states the existence of degenerate double
solids (double solids with branch surface with higher singularities)
as twistor spaces. Finally, it is interesting to note that in our
construction, we can interchange the rulings of the quadric. The effect 
on the double solid is a flop of all exceptional curves. Therefore, there
are two small resolutions admitting a twistor structure.

The content of this paper is as follows. In the first section, we
define and describe \KLB manifolds, for general $n$. Their 
infinitesimal deformations are studied in the next section.
The third section contains the main result, the construction of 
a versal family. Some fibres, which cannot have a real structure,
lie halfway between \KLB manifolds and double solids, and they are
studied in more detail in a separate section. Finally, we 
specialise to the twistor case. We also
give some other examples of our construction.


\begin{notation} Often, we will have two objects,
for example, $E_1$, $E_2$,  under study, which behave in a similar way. To
have efficient notation, we treat both cases at the same time by
dropping all indices, that is, we write $E, \Delta, C$, and so on, instead of
$E_i, \Delta_{i}, C_{i}$. But it is understood that all the new
objects we introduce will pick up an index $i$ if we return to deal
with the global situation.
\end{notation}

\section{\KLB manifolds}\label{sec:KL}
In this section, we define and describe the manifolds we consider.
Although our deformation of \KLB manifolds works only for
$n=3$, we give the definition for arbitrary $n$.
We remark that, on the one hand, Honda has constructed Moishezon twistor
spaces, for arbitrary $n$, which are generalisations of the double solids for
$n=3$ \cite{ho3}, \cite{ho15-1}, \cite{ho23}.
On the other hand, Honda has also constructed Moishezon twistor spaces, for
arbitrary $n$, which are generalisations of LeBrun twistor spaces \cite{ho10}.
It would be interesting to explore in which way our construction
of a versal deformation could be extended to $n\ge4$ and whether this could
give a deformation of (generalised) LeBrun twistor spaces into Honda's
generalised double solid twistor spaces.

We start with the $\P^2$-bundle $\P(\ce_Q):=\P(\sier Q\oplus \sier Q
(1-n,-1)\oplus \sier Q(-1,1-n))$ over the quadric $Q=\P^1 \times
\P^1$, in which we consider the hyper-surface $W$ with bihomogeneous
equation
\begin{equation}\label{klb}
w_1w_2-\vp w_0^2=0 \;.
\end{equation}
Here,  $(w_{0}:w_{1}:w_{2})$ are fibre coordinates corresponding to
the direct summands in this order and $\vp\in H^0(Q,\sier Q(n,n))$.
\begin{remark}
We shall consistently work with homogeneous coordinates.
The bundle $\P(\ce_Q)$ is obtained as the quotient of 
$\left(\C^{2}\setminus 0\right) \times \left(\C^{2}\setminus 0\right)\times
\left(\C^{3}\setminus 0\right)$ under the $\left(\C^{\ast}\right)^{3}$-action
which is given by $(a,b,c)\cdot(s_0,s_1\semic t_0,t_1\semic w_{0},w_{1},w_{2})
= (as_0,as_1\semic bt_0,bt_1\semic cw_{0},ca^{n-1}bw_{1},cab^{n-1}w_{2})$.

Obviously, Equation \eqref{klb} defines a family of conic
bundles which is contained in 
$\P (\ce_Q) \times
H^0(\sier Q(n,n))$. Because we get an isomorphic conic bundle, if we replace
$\varphi$ with a non-zero multiple of it, it is desirable to use the equation
to define a family of conic bundles over 
$\P(H^0(\sier Q(n,n))^*)$.
It turns out that we do not get a family of conic bundles in the product space
$\P (\ce_Q) \times
\P(H^0(\sier Q(n,n))^*)$ but in a non-trivial bundle over  
$\P(H^0(\sier Q(n,n))^*)$. This bundle is most conveniently described as the
quotient of 
$\left(\C^{2}\setminus 0\right) \times \left(\C^{2}\setminus 0\right)\times
\left(\C^{3}\setminus 0\right) \times \left(H^0(\sier Q(n,n))\setminus
  0\right)$ under the $\left(\C^{\ast}\right)^{4}$-action which is given by 
\begin{multline*}
\qquad
(a,b,c,\lambda)\cdot(s_0,s_1\semic t_0,t_1\semic w_{0},w_{1},w_{2}
\semic \varphi)\\
{}= (as_0,as_1\semic bt_0,bt_1\semic cw_{0},\lambda ca^{n-1}bw_{1},cab^{n-1}w_{2}\semic
\lambda\varphi)\;.\qquad
\end{multline*} 
The zero set of Equation \eqref{klb} is invariant
under this action, hence defines a family of conic bundles $\cw\lra\P(H^0(\sier
Q(n,n))^*)$. Note that if we evaluate Equation \eqref{klb} at the point
$(as\semic bt\semic cw_{0},ca^{n-1}bw_{1},cab^{n-1}w_{2}\semic \varphi)$, 
the symbol $\varphi$ is to be seen as $\varphi(as\semic
bt)=a^{n}b^{n}\varphi(s\semic t)$.  
Briefly speaking, the family of $\P^{2}$-bundles constructed this way is
the quotient of $\P (\ce_Q) \times H^0(\sier Q(n,n))$ 
under the $\C^{\ast}$-action
which is given by 
$\lambda\cdot (s\semic t\semic w_{0},w_{1},w_{2}\semic \varphi) = 
(s\semic t\semic w_{0},\lambda w_{1},w_{2}\semic \lambda\varphi)$. 
We could equally well let $\lambda\in\C^{\ast}$ act on the $w_{2}$-component
rather than on $w_{1}$. The two choices are related by the $\C^\ast$-action
on the conic bundle. In fact, there is a $\left(\C^{\ast}\right)^{2}$-action
on $\P (\ce_Q) \times H^0(\sier Q(n,n))$ (or a 
$\left(\C^{\ast}\right)^{5}$-action if we include the $(a,b,c)$), given
by 
$(\lambda,\mu)\cdot (s\semic t\semic w_{0},w_{1},w_{2}\semic \varphi) = 
(s\semic t\semic w_{0},\lambda w_{1},\mu w_{2} \semic \lambda\mu\varphi)$. 
The subgroup $\lambda\mu=1$ survives as  $\C^{\ast}$-action
on the conic bundle.
\end{remark}

The quadric $Q=\P^1 \times \P^1$ has two projections onto $\P^1$,
the projection $\pr1$ on the first factor, and $\pr2$ on the second
factor. The section $E_1\cong Q$ of the $\P^2$-bundle $\P(\ce_Q)$, given by
$w_0=w_1=0$, is contained in $W$, and its  normal bundle  in $W$ is
$\sier Q (-1,1-n)$, so it restricts to $\sier{\P^1}(-1)$ on each
line $\pr2^{-1}(t)$ of the second ruling. Therefore, $E_1$  can be
contracted along the second ruling, according to the
Castelnuovo--Moishezon--Nakano criterion \cite[6.11]{ar},
\cite[Theorem 3.1]{pet}.

\begin{theorem}[Castelnuovo--Moishezon--Nakano criterion]
\label{CMN}
Let $Y$ be smooth of codimension 1 in the manifold $X$ and let $Y\to
Y'$ be a fibration with fibres projective spaces. If the normal
bundle $N_{Y/X}$ restricts to each fibre as $\sier{}(-1)$, then
there exists a contraction $X\to X'$ which blows down $Y$ to
$Y'\subset X'$.
\end{theorem}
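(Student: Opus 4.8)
The plan is to prove the statement analytically locally over $Y'$ and to glue, the outcome being that a neighbourhood of $Y$ in $X$ is recognised as the blow-up of a smooth space $X'$ along a smooth centre $Y'$, with $Y$ as exceptional divisor. Write $\P^r$ for the fibres of $\pi\colon Y\to Y'$. First I would reduce to a local model: since $\P^r$ is infinitesimally rigid, $H^1(\P^r,T_{\P^r})=0$, the proper smooth family $\pi$ is a projective bundle $\P(F)$ for some rank-$(r+1)$ bundle $F$ on $Y'$, hence is analytically locally trivial. Restricting to a Stein (polydisc) open set $V\subset Y'$ I may therefore assume $Y=\P^r\times V$ with $\pi=\pr2$ and with $N_{Y/X}$ restricting to $\sier{\P^r}(-1)$ on each slice $\P^r\times\{y\}$. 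Because the contraction produced below is canonical, the local pieces will glue to a global morphism.

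The core is to contract each fibre $\P^r\times\{y\}$ to a point. Let $\ci=\ci_Y$ be the ideal sheaf of $Y$; as $Y$ is a smooth divisor it is invertible, and its conormal $\ci/\ci^2=N^{*}_{Y/X}$ restricts on each fibre to $\sier{\P^r}(1)$, which is fibrewise ample. I would then study the graded algebra $\bigoplus_{m\ge0}\pi_*(\ci^m/\ci^{m+1})=\bigoplus_{m\ge0}\pi_*\bigl((N^{*}_{Y/X})^{\otimes m}\bigr)$. The decisive input is the vanishing $H^i(\P^r,\sier{\P^r}(m))=0$ for all $i>0$ and $m\ge0$: by cohomology and base change each $\pi_*(N^{*}_{Y/X})^{\otimes m}$ is locally free on $Y'$, and the pushforwards of the filtration sequences $0\to\ci^{m+1}\to\ci^m\to\ci^m/\ci^{m+1}\to0$ have vanishing connecting maps. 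This identifies $\bigoplus_m\pi_*(\ci^m/\ci^{m+1})$ with the symmetric algebra $\Sym^{\bullet}_{\sier{Y'}}G^{\vee}$ on the rank-$(r+1)$ bundle $G=(\pi_*N^{*}_{Y/X})^{\vee}$, which is exactly the associated graded ring of a smooth subvariety of codimension $r+1$. Consequently the contraction exists formally, exhibiting the formal completion $\widehat{X}_Y$ as the blow-up along $Y'$ of a space that is formally smooth along $Y'$.

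Next I would upgrade the formal contraction to an honest analytic one. Fibrewise negativity of $N_{Y/X}$ — it is $\sier{}(-1)$ on each fibre — makes a neighbourhood of $Y$ fibrewise strongly pseudoconvex over $Y'$, so the relative form of Grauert's contractibility theorem (Fujiki--Nakano) yields a proper bimeromorphic morphism $\sigma\colon U\to X'$ on a neighbourhood $U$ of $Y$, with $X'$ normal, collapsing each fibre of $\pi$ to a point, restricting to an isomorphism over $U\setminus Y$, and with $\sigma|_Y=\pi$. The formal computation of the previous step then forces $X'$ to be smooth, $Y'\hookrightarrow X'$ to be a smooth submanifold of codimension $r+1$ with $N_{Y'/X'}\cong G$, and $\sigma$ to be the blow-up of $X'$ along $Y'$: one checks that $\sigma^{-1}\ci_{Y'}\cdot\sier X$ is the invertible ideal $\ci_Y$ and that $X=\mathrm{Proj}_{X'}\bigoplus_m\ci_{Y'}^{m}$, whence $Y=\P(N_{Y'/X'})$ and $N_{Y/X}=\sier{}(-1)$, as required. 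Uniqueness of this contraction makes the local $\sigma$ glue over $Y'$ into the asserted global $X\to X'$.

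The hard part will be precisely this analytic upgrade: passing from the infinitesimal and formal picture to a convergent holomorphic contraction with a \emph{normal} target. The cohomological bookkeeping delivers the formal blow-down almost for free, and once analytic existence and normality of $X'$ are secured, smoothness and the blow-up description follow formally from the single vanishing statement $H^{>0}(\P^r,\sier{}(m))=0$. The genuine analytic content is therefore carried entirely by Grauert's direct-image and contraction machinery, equivalently the Fujiki--Nakano estimates, which is what the cited references \cite[6.11]{ar} and \cite[Theorem 3.1]{pet} supply.
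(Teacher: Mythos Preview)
The paper does not prove this theorem; it is stated as a classical result with references to Artin \cite[6.11]{ar} and Peternell \cite[Theorem 3.1]{pet}, and is used as a black box throughout. There is therefore no in-paper proof against which to compare your sketch.

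That said, your outline is in the spirit of the cited proofs: infer formal contractibility from the vanishing $R^i\pi_*(N^{*}_{Y/X})^{\otimes m}=0$ for $i>0$, $m\ge0$, and then invoke Grauert-type analytic machinery to pass from formal to actual. Two small caveats. First, infinitesimal rigidity of $\P^r$ gives analytic local triviality of $\pi$ (Fischer--Grauert), but not that $Y$ is globally of the form $\P(F)$ for a vector bundle $F$ on $Y'$---there can be a Brauer obstruction; fortunately you only use local triviality, so this is harmless. Second, identifying the associated graded $\bigoplus_m\pi_*(\ci^m/\ci^{m+1})$ with a symmetric algebra does not by itself identify the formal completion with a blow-up: one must also lift the identification through the successive extensions, which again uses the same cohomology vanishing (this is the substance of Nakano's argument). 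You correctly flag that the genuine analytic content---convergence and normality of the target---is carried entirely by the cited references, so your sketch is honest about where the real work lies.
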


In the case at hand, one easily can write down the contraction
explicitly. On $W$, consider the chart  $U_1=W-(w_2)\supset E_1$.
With the homogeneous coordinates $(s_0:s_1\semic t_0:t_1)$ on $Q$
introduced above, 
we define functions
\begin{equation}\label{chartcoord}
v_{i\semic j}:=\frac{w_0}{w_2}s_0^{1-i}s_1^it_0^{n-1-j}t_1^j, \qquad
i=0,1,\quad j=0,\dots,n-1.
\end{equation}
Together with the homogeneous coordinates $(t_0:t_1)$ these
functions map  $U_1$ onto the subset $V_1$ of $\C^{2n}\times \P^1$
defined by the $2\times2$ minors of the matrix
\[
\begin{pmatrix}
t_0^{n-1} & t_0^{n-2}t_1 & \dots & t_1^{n-1} \\
v_{0\semic 0}   & v_{0\semic 1}      & \dots & v_{0\semic n-1} \\
v_{1\semic 0}   & v_{1\semic 1}      & \dots & v_{1\semic n-1} \\
\end{pmatrix}\;.
\]
As $t_0$ and $t_1$ are not both zero, two equations defining
a subspace of $\C^{4}\times \P^1$ suffice:
\begin{equation}\label{chart}
\begin{split}
v_{0\semic 0}t_1^{n-1}-v_{0\semic n-1} t_0^{n-1}&=0\;, \\
v_{1\semic 0}t_1^{n-1}-v_{1\semic n-1} t_0^{n-1}&=0\;. 
\end{split}
\end{equation}
This is the description of the blow-down given by Kurke \cite{ku}.
The section $E_1$ is blown down to the curve $C_1\colon \{0\}\times
\P^1$. From the equations, it is immediate that its normal bundle is
$\co(1-n)\oplus\co(1-n)$.

Likewise, the section $E_2\colon w_0=w_2=0$ can be contracted along  
the first ruling. An explicit  
contraction $U_{2}=W-(w_{1}) \to V_2$ 
is constructed from the above formulas by reversing the role of
the $s_i$ and $t_i$ and of $w_1$ and $w_2$. Altogether we obtain a blowing
down map $\beta\colon W\to Z$.

The space $Z$ may have singularities, but we require that they admit
a small resolution. To study this condition, we first observe that
$W$ and $Z$ have the same singularities, as the singular points of
$W$ lie  in $w_1=w_2=0$ over the singular points of the curve
$D\colon \vp=0$, outside the charts $U_1$ and $U_2$.
We could also resolve first and then blow down. 
This leads to the commutative diagram 
\[
\begin{matrix} \wt W & \mapright{\sigma_W}  & W \\ \mapdown{\wt \beta}
&& \mapdown{\beta} \\[2mm] \wt Z & \mapright{\sigma_Z}  & Z
\makebox[0pt][l]{$\;.$}
\end{matrix}\hphantom{\;.}
\]
The postulated existence of a small resolution limits the possible
types of singularities of $W$ and $Z$. Around each point, one can
find local coordinates such that the singularity has an equation of
the form $w_1w_2-g(x,y)=0$. This is a so-called $cA_k$ point, with
general hyperplane section of type $A_k$, where $k+1$ is the
multiplicity of the plane curve singularity $g$. Such a singularity
admits a small resolution if and only if $g$ factors as a product of
$k+1$ factors, so defines a curve singularity $\Gamma$ with $k+1$
smooth branches \cite[p.~676]{fr}.

\begin{defn} An {\em \KLB manifold\/} $\wt Z$ is a small resolution of
the threefold $Z$ obtained by blowing down the sections $E_1\colon
w_0=w_1=0$ and $E_2\colon w_0=w_2=0$ along opposite rulings in the
hypersurface $W$ in
\[
\P \left(\sier Q\oplus \sier Q (1-n,-1)\oplus \sier Q(-1,1-n)\right)
\]
with bihomogeneous Equation \eqref{klb}:
\[
w_1w_2-\vp w_0^2=0 \;,
\]
where $\vp$ is a section of $\sier Q(n,n)$, 
defining a curve, whose singularities have only smooth branches.
\end{defn}

A $cA_k$ singularity $X$ of the form  $w_1w_2-g(x,y)=0$, where $g$
defines a curve singularity $\Gamma$ with $k+1$ smooth branches, has
$(k+1)!$ small resolutions. Simultaneous small resolutions of such
singularities have been studied by Friedman \cite{fr}. 
Each deformation of a given  small resolution $\wt X$ of $X$
blows down to  a deformation of $X$, as $H^1(\sier{\wt X})=0$
\cite{rie}. This gives a map $\Def_{\wt X}\to \Def_X$ of deformation
spaces (actually a map between the deformation functors), which is
an inclusion of germs: 
the induced map $H^1(\Theta_{\wt X})\to T^1_X$ of tangent spaces
has as kernel 
the vanishing local cohomology group  $H^1_C(\Theta_{\wt X})$, where
$C\subset\wt X$ is the exceptional curve \cite[Prop.~2.1]{fr}. 

The miniversal deformation of the threefold singularity
$X$, which is a double suspension of the curve singularity $\Gamma$,
is obtained by double suspension of the miniversal deformation of
$\Gamma$: it is of the form $w_1w_2-G(x,y;u_1,\dots,u_\tau)=0$.
For the infinitesimal deformations, one has the isomorphism
$T^1_X=\C\{w_1,w_2,x,y\}/(w_2,w_1,g_x,g_y, w_1w_2-g)\cong
\C\{x,y\}/(g_x,g_y, g)=T^1_{\Gamma}$.

The image of $\Def_{\wt X}$ in $\Def_X$ corresponds to the 
locus in the
deformation space of $\Gamma$, where one still  has the
factorisation in $k+1$ branches. Such deformations are obtained by
deforming the parametrisation of the curve $\Gamma$;
this means that we deform the composed map germ $\nu\colon \wt \Gamma
\to \C^2$, where  $\wt \Gamma$ is
the normalisation of $\Gamma$. 
These deformations are
exactly the  $\delta$-constant deformations, where $\delta$ is the
number of (virtual) double points of the singularity $\Gamma$.
The  locus of $\delta$-constant deformations in $\Def_{\Gamma}$
is smooth of codimension $\delta$, as 
$\Gamma$ has smooth branches (a convenient reference is
\cite[Sect.~II.2.7]{GLS}). 

Seen the other way round, the above discussion means that,
given a small resolution $\wt X$ of $X$, and a small deformation 
of $X$, obtained from a $\delta$-constant deformation of $\Gamma$,
there is a unique
simultaneous small resolution extending the given one.

But monodromy may prevent the existence of global
simultaneous small resolutions: even if for a family $\cx \to S$
all small resolutions of the singularities of the fibres $X_s$ extend
to a neighbourhood of $s\in S$, this does not mean that they
extend to a simultaneous small resolution   $\wt\cx\to S$.

\begin{example} \label{examp}
Consider the 1-parameter deformation
$w_1w_2-x(x+y^2-\ep)=0$ of the $cA_1$ singularity $w_1w_2-x(x+y^2)=0$.
The corresponding deformation of the $A_3$ curve singularity $\Gamma
\colon x(x+y^2)=0$
is the simplest $\delta$-constant deformation one can imagine:
the curve $\Gamma$ consists of a parabola and its vertical tangent, and
we move the parabola. So, the general fibre has two singular points,
lying at  $(w_1,w_2,x,y) =(0,0,0,\pm \sqrt{\ep})$, which are ordinary
$A_1$-singularities. Therefore, there are four small
resolutions. 
The central fibre has only one singularity, and 
two small resolutions, both given as closure of the graph of a map to $\P^1$.
These resolutions extend over the whole $\ep$-axis.  The maps are
\[
\frac{w_1}x=\frac{x+y^2-\ep}{w_2}=\frac\sigma\tau
\qquad\mbox{and}\qquad
\frac{w_1}{x+y^2-\ep}=\frac x{w_2}=\frac\sigma\tau\;.
\]
In both cases, the exceptional curve is a smooth $\P^1$ with normal bundle
$\co\oplus\co(-2)$, which splits under deformation in two curves
with normal bundle $\co(-1)\oplus \co(-1)$, see also \cite[p.~678]{fr}.
The extension of these two small resolutions to the general fibre
gives us two of the four small resolutions of the general fibre.
The other two are obtained by resolving  the singularity at $y=\sqrt{\ep}$ 
with one of the two resolutions above and the singularity at $y=-\sqrt{\ep}$
with the other. But this cannot be done consistently for all
$\ep\neq0$; one needs a base change. We consider for each $\ep$ in the unit disk
the set of all possible small resolutions. By local extendability, these fit together
into three components, two disks corresponding to the two global
small resolutions, and one punctured disk which is a $2\!:\!1$ covering 
of the punctured disk.
\end{example}

The description of the singularities translates into the following
description of the discriminant curve $D\colon \vp=0$. Let $\wt D$ be
the normalisation of $D$ and consider the composed map $\nu\colon \wt D
\to Q$. Then, $\nu$ is an immersion. We can deform the \KLB
manifold $Z$ by deforming $\wt D$ and the map $\nu$.
Let $r$ be the number of components
of  $D$ and $g$  the sum of the geometric genera of these components.
Let $\delta$ be the sum of the delta invariants
of the singular points of $D$. As the arithmetic genus of $D$, a curve of type
$(n,n)$,  is $(n-1)^2$, the cohomology exact sequence of the sequence
\[
0 \lra \sier D \lra \sier{\wt D} \lra {\textstyle \bigoplus_p}
 \sier{\wt D,p} /\sier{D,p} \lra 0
\]
gives the relation
$1+\delta + g = r + (n-1)^2$ between these invariants.

The space  $\P(H^0\sier Q(n,n)^*)$ parametrising discriminant
curves $D$ splits naturally into different strata, according to the
values of the total $\delta$. 
These strata may
have several components, which sometimes can be distinguished by the
number $r$ of components of $D$; for example, a curve of type $(3,3)$ with
$\delta=4$ can be a rational curve with four double points, or the
intersection of an elliptic curve (type $(2,2)$) with a conic (type
$(1,1)$).  
In each stratum, we consider the open set
where the normalisation $\nu$ is an immersion.
As will be shown as a consequence of Lemma \ref{lem} in the
next section, this open set 
is smooth of codimension $\delta$. 
It constitutes a versal (but not miniversal) deformation at each of its
points (versal for $\delta$-const deformations). 
We prefer to work with this larger family 
and not with the miniversal deformation of a given curve $D$.
In fact, as we shall see, the dimension of the miniversal deformation
depends not only on $\delta$, but also on the automorphism group of $D$,
whereas the dimension of the stratum depends on $\delta$ only.
Locally, one can always find a miniversal deformation by taking a
suitable transverse slice to the orbit of the group of
coordinate transformations. 

Let $\wl\Lambda_0\subset\P(H^0(\sier Q(n,n))^*)$ be a maximal open set in
a stratum, parametrising possible discriminant curves $D$ with 
smooth branches everywhere.
For each fibre of the family of conic bundles over it, we consider
all possible small resolutions. These fit together to a covering
$\Lambda_0\lra \wl\Lambda_0$ 
with finite fibres, which may not be locally trivial (see  Example 
\ref{examp}), but which still is a local homeomorphism. In this way, we get a family 
$W_{\Lambda_0} \to \Lambda_0$ with simultaneous small resolution
$\wt W_{\Lambda_0} \to \Lambda_0$. Blowing down sections finally
gives a family   $\wt Z_{\Lambda_0} \to \Lambda_0$, where each fibre
$\wt Z_{\lambda_0}$ is an \KLB manifold.

\begin{remark}
The largest stratum, where $\delta=0$, parametrises smooth curves
$D$, so no small resolution is needed. The next stratum $(\delta=1)$
gives irreducible curves with one ordinary double point or a cusp; we only consider
the open set parametrising ordinary double points, so there
are two small resolutions. Presumably, the double cover $\Lambda_0$
is globally irreducible. For $\delta=2$, the local situation of
Example \ref{examp}  can occur.
In the twistor case, we have $g=0$ and $r=n$ and so $\delta=n(n-1)$, which is
equal to $6$ if $n=3$, the case we specialise to later on.
\end{remark}

\section{Infinitesimal deformations}\label{sec:infdef}
To study the deformation theory of an \KLB manifold $\wt Z$, we
first determine the cohomology of its tangent sheaf. We also
consider infinitesimal deformations of $\wt Z$ together with two
surfaces $\wt S_1$, $\wt S_2$. In the twistor case, such computations
were done by LeBrun \cite{lb2} under the assumption that all
singularities are of type $A_1$. A closely related proof that
$h^i(\Theta_{\wt Z})=0$ for $i\geq 2$ was given by Campana
\cite{ca1,ca2}, under the same assumptions. Honda \cite{ho2} considered
the case of extra symmetry. All these computations do not use the
additional real structure and they generalise to our situation. We
follow the same strategy. As our computations mostly take place on
the singular spaces $Z$ and $W$, we 
use the general theory
of the cotangent complex, but for our purposes, the treatment in
\cite{pa} suffices.

Let $W$ be a conic bundle over $Q=\P^1\times \P^1$ with Equation
\eqref{klb}: $w_1w_2-\vp w_0^2=0$.
Let $r$ be the number of components of the discriminant curve
$D\colon\vp=0$ and $g$  the sum of the geometric genera of these
components. Let $\delta$ be the sum of the delta invariants of the
singular points of $D$. 

\begin{proposition}
\label{raakw} Let $\sigma\colon\wt W\to W$ be any small resolution
of the threefold $W$. Suppose $n\ge3$. Then, $h^0(\Theta_{\wt W})=2$,
if $D$ admits a $1$-dimensional symmetry group, and $h^0(\Theta_{\wt
W})=1$ otherwise, $h^1(\Theta_{\wt W})-h^0(\Theta_{\wt
W})=n^2+2n-7-\delta=g-r+4n-7$ and $h^j(\Theta_{\wt W})=0$ for
$j\geq2$.
\end{proposition}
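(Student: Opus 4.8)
The plan is to compute the cohomology of $\Theta_{\wt W}$ by pushing everything down to $W$ and then to $Q$, using that $\sigma$ is small. First I would observe that since $\sigma\colon \wt W\to W$ is a small resolution, $\sigma_*\Theta_{\wt W}$ is a reflexive sheaf on $W$ agreeing with $\Theta_W$ (the sheaf $\cal{H}om(\Omega_W,\sier W)$) away from the exceptional curves, and $R^j\sigma_*\Theta_{\wt W}=0$ for $j\ge 1$ (the fibres are trees of $\P^1$'s and $\Theta_{\wt W}$ restricted to an exceptional $\P^1$ with normal bundle $\sier{}(-1)\oplus\sier{}(-1)$ has no higher cohomology). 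Hence $h^j(\Theta_{\wt W})=h^j(W,\sigma_*\Theta_{\wt W})$ for all $j$, and it remains to identify $\sigma_*\Theta_{\wt W}$ on $W$. The subtle point is that $\sigma_*\Theta_{\wt W}$ need not be all of $\Theta_W$: the discussion after Friedman's result in the excerpt shows that $H^1_C(\Theta_{\wt X})$ is the vanishing cohomology, so locally near a $cA_k$ point the image of $H^1(\Theta_{\wt W})$ in $T^1_W$ is the $\delta$-constant locus, which has codimension $\delta$ in $T^1_\Gamma$. I expect this to be exactly the source of the term $-\delta$ in the formula, so I would set up a local-to-global spectral sequence or the exact sequence relating $\Theta_{\wt W}$, $\sigma_*\Theta_{\wt W}$ and the sheaf $T^1_{W}$ supported on the singular locus, and read off the correction.

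Next I would compute the cohomology of $\Theta_W$ itself (or rather of $\sigma_*\Theta_{\wt W}$) by pushing down to $Q$ via the conic bundle map $\pi\colon W\to Q$. Using the tangent sequence $0\to\Theta_{W/Q}\to\Theta_W\to\pi^*\Theta_Q\to (\text{normal-bundle contribution})$ — more precisely the relative Euler-type sequence adapted to the hypersurface $W$ inside $\P(\ce_Q)$ — one gets $\pi_*\Theta_W$ and $R^1\pi_*\Theta_W$ in terms of explicit sheaves on $Q$ built from $\sier Q$, $\sier Q(1-n,-1)$, $\sier Q(-1,1-n)$ and $\sier Q(n,n)$. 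The $h^0$ statement (the dimension $1$ or $2$ depending on whether $D$ has a one-dimensional symmetry group) should come out of $H^0(Q,\pi_*\Theta_W)$: the generic vector field is the $\C^\ast$-action on the conic bundle described in the Remark, and an extra one appears precisely when $\vp$ is preserved by a one-parameter subgroup of $\mathrm{Aut}(Q)$. The vanishing $h^j=0$ for $j\ge 2$ follows because $Q$ is a surface, so $H^j(Q,-)=0$ for $j\ge 3$, and the $H^2$ terms of the relevant sheaves on $Q$ vanish by Serre duality once $n\ge 3$ (this is where the hypothesis $n\ge 3$ is used — it makes the twists sufficiently positive/negative that the obstruction group dies).

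Finally, the Euler-characteristic computation $h^1-h^0=n^2+2n-7-\delta$ I would obtain by Riemann-Roch / additivity of $\chi$ along the pushforwards: $\chi(\Theta_{\wt W})=\chi(W,\sigma_*\Theta_{\wt W})=\chi(Q,\pi_*\Theta_W)-\chi(Q,R^1\pi_*\Theta_W)$ minus the $\delta$ correction coming from the non-surjectivity onto $T^1_W$. Each $\chi$ on $Q=\P^1\times\P^1$ is a routine computation with the given line bundles; I would organise it so that the "$\delta=0$" answer $n^2+2n-7$ appears as the Euler characteristic of the smooth conic bundle and the $-\delta$ is the defect measured by $\sum_p \dim(T^1_{\Gamma_p}/(\delta\text{-const locus}))=\sum_p\delta_p=\delta$. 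The identity $1+\delta+g=r+(n-1)^2$ from the exact sequence for $\sier D\hookrightarrow\sier{\wt D}$ in the excerpt then converts $n^2+2n-7-\delta$ into $g-r+4n-7$, giving the second form of the answer.

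The main obstacle I anticipate is the middle step: correctly identifying $\sigma_*\Theta_{\wt W}$ as a subsheaf of $\Theta_W$ and keeping precise track of the local contribution at each $cA_k$ point. The global statement "monodromy may prevent a simultaneous small resolution" in the excerpt warns that one must be careful that the formula is independent of which small resolution $\sigma$ is chosen; the cleanest way to see this is that $\chi(\Theta_{\wt W})$ only sees $\chi$ of the pushforward, which is resolution-independent, and that $h^j=0$ for $j\ge 2$ kills any potential jumping — so the only thing to nail down carefully is $h^0$, which is a transparent symmetry count. Everything else is bookkeeping with sheaves on $\P^1\times\P^1$.
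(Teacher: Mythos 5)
The first step of your reduction fails: $R^1\sigma_*\Theta_{\wt W}$ does not vanish in general. Vanishing of $H^1$ of $\Theta_{\wt W}$ on the reduced exceptional fibre is not enough (by the theorem on formal functions one must control all infinitesimal neighbourhoods), and in fact at a $cA_k$ point $w_1w_2=g(x,y)$ with plane curve singularity $\Gamma$ one has, on a Stein neighbourhood, $H^0(R^1\sigma_*\Theta_{\wt W})=H^1(\Theta_{\wt W})\hookrightarrow T^1_\Gamma$ with image the tangent space to the $\delta$-constant stratum; so $R^1\sigma_*\Theta_{\wt W}$ is a skyscraper of length $\tau(\Gamma)-\delta(\Gamma)$ at each singular point. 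This is zero only for ordinary nodes; already for the $cA_1$ point $w_1w_2-x(x+y^2)=0$ of Example \ref{examp} it has length one. Hence $h^j(\Theta_{\wt W})\ne h^j(W,\sigma_*\Theta_{\wt W})$ in general and your computation takes place on the wrong sheaf. (The identification $\sigma_*\Theta_{\wt W}=\ct^0_W$ is correct and is what the paper uses; the correction you are looking for sits in $R^1\sigma_*$, not in the zeroth direct image, and your worry that $\sigma_*\Theta_{\wt W}$ might be a proper subsheaf of $\Theta_W$ is misplaced.)

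Second, the vanishing $h^j(\Theta_{\wt W})=0$ for $j\ge2$ is not a positivity or Serre-duality statement on $Q$, and the $-\delta$ is not pure local bookkeeping. Two global inputs are needed. First, $T^2_W=0$: in the paper this comes from the sequence $0\to\ct^0_W\to\Theta_{\P}|_W\to\cn\to\ct^1_W\to0$ together with the fact that $H^1(\Theta_{\P})\to H^1(\cn)$ is an isomorphism; both groups have dimension $2(n-1)(n-3)$ and are nonzero for $n>3$, so nothing dies for degree reasons and an explicit map must be checked. Second, even granting $T^2_W=0$, the map $T^1_W\to H^0(\ct^1_W)$ need not be surjective (the paper exhibits a discriminant curve with two $\wt E_7$ points whose singularities do not impose independent conditions), so $H^2(\ct^0_W)=H^2(\Theta_W)\ne0$ in general; your reduction would then output a nonzero $h^2$. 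The actual vanishing of $H^2(\Theta_{\wt W})$ is a cancellation: one compares the Leray sequence for $\sigma$ with the local-to-global sequence for $T^i_W$ and shows that $H^1(\Theta_{\wt W})\to H^0(R^1\sigma_*\Theta_{\wt W})$ and $T^1_W\to H^0(\ct^1_W)$ have the same cokernel. That requires the \emph{global} codimension of $H^1(\Theta_{\wt W})$ in $T^1_W$ to equal the sum $\delta$ of the local codimensions, i.e.\ that the $\delta$-constant stratum of the discriminant curve is smooth of codimension exactly $\delta$; this is Lemma \ref{lem} ($H^1(\cn_\nu)=0$ for the normalisation $\nu\colon\wt D\to Q$), and it is precisely where the hypothesis that all branches of $D$ are smooth enters. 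Your proposal asserts the equality of the global defect with $\sum_p\delta_p$ without this transversality; without it both the $h^2$ vanishing and the formula $h^1-h^0=n^2+2n-7-\delta$ can fail.
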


\begin{proof} We first study the tangent cohomology of $W$. It turns
out that every deformation of $W$ comes about by changing the
equation inside the $\P^2$-bundle 
\[
\P:= \P \left(\sier Q\oplus
\sier Q (1-n,-1)\oplus \sier Q(-1,1-n)\right) \mapright\pi Q\;.
\] 
To prove this, we look at  global tangent cohomology. Consider the
embedding $i\colon W\to \P$. The functor $\Def_{W/\P}$ describes the
deformations of $W$ and $i$ inside $\P$. We want to compare it with
the deformation functor of $W$. For the corresponding tangent
cohomology sheaves, we have a long exact sequence
\cite[(4.3.1)]{pa}
\[
0\lra \ct^0(W/\P) \lra \ct^0_W \lra \ct^0(\P,\sier W) \lra  \ct^1(W/\P)
 \lra \ct^1_W \lra \cdots\;.
\]
As $i$ is an embedding, $\ct^0(W/\P)=0$ and $\ct^1(W/\P)=\cn$, the
normal sheaf of $W$ in $\P$. The long exact sequence reduces to the
familiar exact sequence
\[
0 \lra \ct^0_W \lra \Theta_{\P}|_W \lra \cn \lra \ct^1_W \lra 0\;,
\]
because $\ct^i(\P,\sier W) =0$  for $i>0$ ($\P$ is smooth) and
$\ct^i_W=0$ for $i\geq2$ as $W$ has only hypersurface singularities.
The local-to-global spectral sequence now gives $T^0(W/\P)=0$ and
$T^i(W/\P)=H^{i-1}(\cn)$. The global version of the long exact
sequence is
\[
0 \lra T^0(W/\P) \lra T^0_W \lra T^0(\P,\sier W) \lra  T^1(W/\P)
 \lra T^1_W \lra \cdots\;.
\]
As (again by a local-to-global argument) $T^i(\P,\sier
W)=H^i(\Theta_{\P}|_W) $, one obtains
\begin{equation}\label{eq:theta}
0  \lra T^0_W \lra H^0(\Theta_{\P}|_W) \lra  H^{0}(\cn)
 \lra T^1_W \lra \cdots\;.
\end{equation}
The relevant groups have been  computed by Campana \cite[Lemma 3.8]{ca2} and
LeBrun (in the  proof of \cite[Prop.~1]{lb2}). 
The tangent bundle to $\P$ is described by the Euler sequence
\[
0\lra \sier{\P} \lra (\pi^* \ce_{Q}^{\vee})(1) \lra \Theta_{\P} \lra 
  \pi^*\Theta_{Q} \lra 0\;,
\]
where $\ce_{Q}$, as before, is the vector bundle we have used to define 
$\P=\P(\ce_{Q})  \mapright\pi Q$. 
We first twist this sequence
with the ideal sheaf $\sier{\P}(-W)=\sier{\P}(-2)\otimes
\pi^*\sier Q(-n,-n)$ to compute $H^i(\Theta_{\P}(-W))$
using the Leray spectral sequence for the map $\pi$.
As $H^i(\sier{\P^2}(-1))=H^i(\sier{\P^2}(-2))=0$
for all $i$, we find that
$H^i(\Theta_{\P}(-W))=0$. This implies that
$H^i(\Theta_{\P}|_W)\cong H^i(\Theta_{\P})$, which is independent of
$W$. We compute these groups using the exact sequence 
\[
0\lra \Theta_\pi \lra \Theta_{\P} \lra 
  \pi^*\Theta_{Q} \lra 0\;,
\]
defining the sheaf $\Theta_\pi$ of vertical vector fields.  
Using $\pi_{*}\sier{\P}=\sier{Q}$, $R^{1}\pi_{*}\sier{\P}=0$ and
$\pi_{*}\sier{\P}(1)=\ce_{Q}$, the exact sequence 
\(0\rightarrow \sier{\P} \rightarrow (\pi^* \ce_{Q}^{\vee})(1)
\rightarrow \Theta_\pi \rightarrow 0\), obtained from the Euler
sequence, implies $H^i(\Theta_\pi)=H^i(Q,\End_0(\ce_{Q}))$,
where $\End_0(\ce_{Q})$ is the sheaf of traceless endomorphisms of $\ce_{Q}$.
One finds $h^0(\Theta_{\P})=4n+8$,
$h^1(\Theta_{\P})=2(n-1)(n-3)$ and $h^i(\Theta_{\P})=0$ for
$i\geq2$.

The cohomology of $\cn$ can be computed from
\[
0 \lra \sier\P \lra \sier\P(2)\otimes\pi^*\sier Q(n,n) \lra \cn
\lra 0 \;.
\]
The result is \cite{lb2}: $h^0(\cn)=n^2+6n+1$, $h^1(\cn)=2(n-1)(n-3)$
and $h^i(\cn)=0$ for $i\geq2$. Here, one uses
$\pi_*(\sier{\P}(k))=\mathop{\rm Sym}^k(\ce_{Q})$ for $k\geq0$.

All these groups occur  in our long exact sequence
\eqref{eq:theta}, which now
reads
\[
0 \to T^0_W \lra H^0(\Theta_{\P})  \lra H^0(\cn)
  \lra T^1_W
\lra H^1(\Theta_{\P})  \lra H^1(\cn)
  \lra T^2_W \to 0\;.
\]
The map  $H^1(\Theta_{\P}) \to H^1(\cn)$ is an isomorphism.
For $n=3$, both groups are zero. For $n>3$, the
proof is given in \cite[3.11]{ca2} and \cite{lb2}.
One realises  $H^1(\Theta_{\P})$ as
$H^1(\sier Q(2-n,n-2))w_2\frac{\partial}{\partial w_1}
\oplus H^1(\sier Q(n-2,2-n))w_1\frac{\partial}{\partial w_2}$
and $ H^1(\cn)$ as 
$H^1(\sier Q(2-n,n-2))w_1^2\oplus H^1(\sier Q(n-2,2-n))w_2^2$.
The map is given by evaluating a vector field on the polynomial
$w_1w_2-\vp w_0^2$ defining $W$; it clearly is an isomorphism. 
We note that it is independent
of the precise form of the polynomial $\vp$.
We conclude that $T^i_W=0$
for $i\ge2$, so the deformation space of $W$ is smooth. Furthermore,
every deformation of $W$ comes from deforming the equation of $W$
inside $\P$ and we get the exact sequence
\[
0\lra T^0_W\lra H^0(\Theta_{\P}) \to H^0(\cn)\lra T^1_W\lra0\;,
\]
giving us a formula for the dimension of $T^1_W$, once we know the
symmetry group of $W$.
The automorphism group of $\P$ has dimension $4n+8$. Automorphisms
of the  form 
\[
(w_0:w_1:w_2)\mapsto (w_0:w_1+\psi_1 w_0: w_2+\psi_2 w_0)\;,
\]
with $\psi_1\in H^0(\sier Q(n-1,1))$ and $\psi_2\in H^0(\sier Q(1,n-1))$,
which form a $4n$-dimensional family, do not preserve the defining equation
$w_1w_2-\vp w_0^2=0$ for $W$. The $1$-parameter subgroup 
$\la\cdot(w_0,w_1,w_2)=(w_0,\la w_1,\la^{-1}w_2)$ of the
two-dimensional diagonal action on $(w_0:w_1:w_2)$
leaves $W$ invariant. Finally, one has the automorphisms of the base quadric 
$Q$. The dimension of the subgroup of automorphisms preserving the
curve $D$ is at most $1$. A description of the occurring
cases is given in remark \ref{rmk:automor} below.
This implies $1\le \dim T^{0}_{W} \le 2$.

To get to $\wt W$, we use the Leray spectral sequence
$H^p(R^q\sigma_* \Theta_{\wt W}) \Rightarrow H^{p+q}(\Theta_{\wt
W})$ and compare it to the local-to-global spectral sequence for
$T^i_W$, as in \cite[(3.4)]{fr}: 
\[
\xymatrix
{
  0                              \ar[r]
  &H^1(\ct^0_W)                   \ar[r]\ar[d]
  &H^1(\Theta_{\wt W})              \ar[r]\ar[d]
  &H^{0}(R^1\sigma_*\Theta_{\wt W})  \ar[r]\ar[d]
  &H^2(\ct^0_W)                   \ar[r]\ar[d]
  &H^2(\Theta_{\wt W})              \ar[r]\ar[d]
  &0
  \\
  0             \ar[r]
  &H^1(\ct^0_W)  \ar[r]
  &T^1_W         \ar[r]
  &H^0(\ct^1_{W}) \ar[r]
  &H^2(\ct^0_W)  \ar[r]
  &T^2_{W}       \ar[r]
  &0\makebox[0pt][l]{$\;.$}
}\hphantom{\;.}
\]
We have shown that $T^2_{W}=0$. In general the map
$ T^1_W \to  H^0(\ct^1_{W}) $ will not be surjective,
that is, the singular points do not impose independent conditions. So,
$H^2(\ct^0_W)$ will be non-zero. 
We compare the image of the deformations of $\wt W$ in the deformations of 
$W$ with the image of the  $\delta$-constant 
deformations of the curve $D\colon \vp=0$ (the discriminant of the conic fibration
$W$) in all deformations of $D$.

Given a deformation of $W$, we get a deformation of the
discriminant curve and conversely, given a deformation $\vp'$ of
$\vp$, we can write the equation $w_1w_2-\vp'w_0^2$. This shows that
the natural map $\Def_{W/\P}\to \Def_{D/Q}$ is a smooth surjection
of functors. On the tangent space level, the  kernel is the space of
global sections of vertical vector fields.

To study $\delta$-constant deformations of $D$, we let $\wt D$ be the
normalisation of $D$ and we consider the composed map $\nu\colon \wt
D\to Q$. We study the deformation functor  $\Def_{\wt D/Q}$. The
tangent cohomology $\ct^1(\wt D/Q)$ is the normal bundle along the map $\nu$
occurring in the exact sequence
\[
0\lra \Theta_{\wt D} \lra \nu^*\Theta_Q \lra \cn_\nu \lra 0 \;.
\]
The assumption that all branches at the singular points of $D$ are
smooth gives that $\cn_\nu$ is a line bundle.
The tangent space to  $\Def_{\wt D/Q}$ is $H^0(\cn_\nu)$ and obstructions
lie in $H^1(\cn_\nu)$.
A straightforward calculation shows 
$\chi\left(\nu^*\Theta_Q\right) = 4n+2(r-g)$,
$\chi\left(\Theta_{\wt D}\right) = 3(r-g)$ and hence
$\chi\left(\cn_\nu\right) = 4n-(r-g)=(n+1)^{2}-1-\delta$.

\begin{lemma}\label{lem}
$H^1(\cn_\nu)=0$.
\end{lemma}

\begin{proof}
As the degree of $\nu^*\Theta_Q$ is positive on every component
$\wt D_i$ of $\wt D$, we have $\deg \cn_\nu|_{\wt D_i}>2g(\wt D_i)-2$
and $H^1(\wt D_i,\cn_\nu)=0$.
\end{proof}

\noindent We conclude that $\Def_{\wt D/Q}$ is smooth of the
expected codimension $\delta$ in $\Def_{D/Q}$, so $H^1(\Theta_{\wt W})$ 
also has the expected codimension in $T^1_W$. This is also the
codimension of $H^{0}(R^1\sigma_*\Theta_{\wt W}) $ in 
$ H^0(\ct^1_{W})$. Therefore, 
$H^1(\Theta_{\wt W}) \to H^{0}(R^1\sigma_*\Theta_{\wt W})$ and $T^1_W \to
H^0(\ct^1_{W})$ have the same cokernel. This shows that $ H^2(\Theta_{\wt
W})=0$.
\end{proof}

\begin{remark} \label{rmk:automor}
The symmetries of the
curve $D$ on $Q=\P^1\times \P^1$ can be determined with the same
methods as for plane curves \cite{wdp}. We are grateful to C.T.C.\ Wall
for help on this point. Only a few cases arise. If
the symmetry group is semisimple, the action can be diagonalised.
The monomials $s_0^is_1^{n-i}t_0^jt_1^{n-j}$ occurring in the
equation $\vp=0$ must be such that  the points $(i,j)$ lie on a
straight line in $[0,n]\times [0,n]$. If this line joins two
opposite corners, we get an equation of the form
$\prod_{i=1}^n(a_is_0t_0-b_is_1t_1)$ or with $t_0$ and $t_1$ interchanged; 
the twistor spaces with torus
action studied by Honda \cite{ho2} are of this type. The total
$\delta$ increases if one or two of the conics degenerate, giving the
cases $s_0t_0\prod_{i=1}^{n-1}(a_is_0t_0-b_is_1t_1)$ and
$s_0s_1t_0t_1\prod_{i=1}^{n-2}(a_is_0t_0-b_is_1t_1)$. The line from
$(1,0)$ to $(n,n-1)$  leads to
$s_0t_1\prod_{i=1}^{n-1}(a_is_0t_0-b_is_1t_1)$, containing the
degenerate conic $s_0t_1$. For general $n$, these are the only
possibilities to obtain a reduced curve with singularities with
smooth branches, but for $n=3$ and $n=4$, one also has
$t_0t_1(s_0^nt_0^{n-2}-s_1^nt_1^{n-2})$. For unipotent symmetry
group, there are a priori two cases, but only
$\la\cdot(s_0:s_1\semic t_0:t_1) = (s_0:s_1+\la s_0\semic t_0:t_1+\la t_0)$
gives reduced curves, namely tangent conics
$\prod_{i=1}^n(s_0t_1-s_1t_0-c_is_0t_0)$, one of which may
degenerate to the line pair $s_0t_0$.
\end{remark}
 
\begin{example}
In our discussion of Honda's deformation with torus action
\cite{ho2} in Example \ref{nonumber} in Section \ref{examples},
we consider a  discriminant  curve with one node
and two singularities of type $\wt E_7$ (four lines through one
point). In suitable coordinates its equation is
\[
s_0t_0(a_1s_0t_0-b_1s_1t_1)(a_2s_0t_0-b_2s_1t_1)(a_3s_0t_0-b_3s_1t_1)=0\;.
\]
The cross ratio of the four lines at the two singular points
$(1:0;0:1) $ and $(0:1;1:0)$ is the same. This shows that the stratum
does not form a versal $\delta$-const deformation of 
each of the singular points separately.
We have here an example where 
the singular points do not impose independent conditions.
\end{example}

\begin{theorem}
\label{raakz} Let $\wt Z$ be an \KLB manifold, that is,
$\sigma\colon\wt Z\to Z$ is any small resolution of the threefold
$Z$, obtained by blowing down sections of $W$ along opposite
rulings. Suppose $n\ge3$. Then, $h^0(\Theta_{\wt Z})=2$, if the
discriminant curve $D$ admits a one-dimensional symmetry group, and
$h^0(\Theta_{\wt Z})=1$ otherwise, $h^1(\Theta_{\wt
Z})-h^0(\Theta_{\wt Z})=n^2+6n-15-\delta=g-r+8n-15$ and
$h^j(\Theta_{\wt Z})=0$ for $j\geq2$. In particular, the deformation
space is smooth.
\end{theorem}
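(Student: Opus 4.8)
The plan is to derive the statement from Proposition~\ref{raakw} by analysing the blowing-down morphism $\wt\beta\colon\wt W\to\wt Z$. Since the sections $E_1$, $E_2$ avoid the singular locus, $\wt\beta$ is an isomorphism near the exceptional set of $\sigma_Z$, and in a neighbourhood of $E_i$ it coincides with $\beta\colon W\to Z$. Reading off the chart~\eqref{chart} and its counterpart for $E_2$, one sees that there $V_i$ is the total space of $\co(1-n)\oplus\co(1-n)$ over $C_i\cong\P^1$ and that $U_i\to V_i$ is the blow-up of $V_i$ along the zero-section $C_i$, the extra $\P^1$-factor of $E_i\cong\P^1\times\P^1$ being the projectivised normal direction. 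As $C_1\cap C_2=\emptyset$, the morphism $\wt\beta$ is globally the blow-up of the smooth threefold $\wt Z$ along the disjoint smooth curves $C:=C_1\sqcup C_2$, with $N_{C_i/\wt Z}=\co(1-n)\oplus\co(1-n)$.

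With this in hand, I would compute the direct images $R^q\wt\beta_*\Theta_{\wt W}$. Applying $\wt\beta_*$ to the natural inclusion $\Theta_{\wt W}\hookrightarrow\wt\beta^*\Theta_{\wt Z}$ and using $\wt\beta_*\sier{\wt W}=\sier{\wt Z}$ gives an inclusion $\wt\beta_*\Theta_{\wt W}\hookrightarrow\Theta_{\wt Z}$, whose image a local computation in the chart above identifies with the subsheaf $\Theta_{\wt Z}\langle C\rangle\subset\Theta_{\wt Z}$ of vector fields preserving the ideal sheaf of $C$ (a vector field lifts to a blow-up precisely when it is tangent to the centre). For the higher direct images, let $F\cong\P^1$ be a fibre of $\wt\beta$ over a point of $C_i$; it is a line of the ruling along which $E_i$ is contracted, so $N_{E_i/\wt W}|_F=\co(-1)$ (Section~\ref{sec:KL}), while $N_{F/E_i}=\co$. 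The two exact sequences
\[
0\to\co\to N_{F/\wt W}\to\co(-1)\to0,\qquad
0\to\Theta_F\to\Theta_{\wt W}|_F\to N_{F/\wt W}\to0
\]
then give $h^1(F,\Theta_{\wt W}|_F)=0$, and the same vanishing persists on every infinitesimal neighbourhood of $F$, so the theorem on formal functions yields $R^q\wt\beta_*\Theta_{\wt W}=0$ for $q\ge1$. Consequently $H^j(\wt W,\Theta_{\wt W})\cong H^j(\wt Z,\Theta_{\wt Z}\langle C\rangle)$ for all $j$.

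It remains to combine this with the exact sequence
\[
0\to\Theta_{\wt Z}\langle C\rangle\to\Theta_{\wt Z}\to i_*N_{C/\wt Z}\to0
\]
on $\wt Z$, where $i\colon C\hookrightarrow\wt Z$ is the inclusion. Taking cohomology yields the long exact sequence
\[
\cdots\to H^j(\Theta_{\wt W})\to H^j(\Theta_{\wt Z})\to H^j\bigl(C,N_{C/\wt Z}\bigr)\to H^{j+1}(\Theta_{\wt W})\to\cdots .
\]
Since each $C_i\cong\P^1$ has $N_{C_i/\wt Z}=\co(1-n)^{\oplus2}$ and $n\ge3$, one has $h^0(C,N_{C/\wt Z})=0$, $h^1(C,N_{C/\wt Z})=4(n-2)$ and $h^j(C,N_{C/\wt Z})=0$ for $j\ge2$. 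Combined with Proposition~\ref{raakw} (in particular $h^j(\Theta_{\wt W})=0$ for $j\ge2$), this gives at once $h^j(\Theta_{\wt Z})=0$ for $j\ge2$---whence the deformation space is smooth---together with $h^0(\Theta_{\wt Z})=h^0(\Theta_{\wt W})$ (so the dichotomy for $h^0$ is the one of Proposition~\ref{raakw}) and a short exact sequence $0\to H^1(\Theta_{\wt W})\to H^1(\Theta_{\wt Z})\to H^1(C,N_{C/\wt Z})\to0$, so that
\[
h^1(\Theta_{\wt Z})-h^0(\Theta_{\wt Z})=\bigl(h^1(\Theta_{\wt W})-h^0(\Theta_{\wt W})\bigr)+4(n-2)=n^2+6n-15-\delta=g-r+8n-15 .
\]

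The step I expect to be the main obstacle is the first paragraph: identifying the local structure of $\wt\beta$ along $E_1\cup E_2$ as the blow-up of a smooth curve in a smooth threefold with the stated balanced negative normal bundle, and the resulting vanishing $R^1\wt\beta_*\Theta_{\wt W}=0$; once these are secured, the rest is bookkeeping with long exact sequences. One should also note that the argument is uniform in the chosen small resolution, since $\wt\beta$ is unaffected by $\sigma_Z$ near $E_1\cup E_2$, so the conclusion holds for every Kurke-LeBrun manifold $\wt Z$.
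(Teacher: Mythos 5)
Your proposal is correct and follows essentially the same route as the paper: both push $\Theta_{\wt W}$ forward along the contraction $\wt\beta$ to identify it with the sheaf of vector fields on $\wt Z$ tangent to $\wt C_1\cup\wt C_2$, use the vanishing of the higher direct images and the exact sequence $0\to(\Theta_{\wt Z})_{\wt C_1,\wt C_2}\to\Theta_{\wt Z}\to\cn_{\wt C_1}\oplus\cn_{\wt C_2}\to0$ with $\cn_{\wt C_i}\cong\co(1-n)^{\oplus2}$, and then conclude from Proposition \ref{raakw}. The only difference is that you supply the local computations (blow-up structure of $\wt\beta$ along the curves, formal-functions argument for $R^{q}\wt\beta_*\Theta_{\wt W}=0$) which the paper delegates to the citation of LeBrun.
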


\begin{proof}[Proof\/ {\rm(cf.~\cite[Prop.~2 and Cor.~1]{lb2})}] Let
$\beta\colon W\to Z$ be the blowing down map. As the singular points
of $W$ are disjoint from the exceptional locus of the map $\beta$,
we can choose a small resolution $\sigma\colon\wt W\to W$ such that
$\wt W$ blows down to $\wt Z$. The image under $\beta$
of the section $E_1$ is the
curve $C_1$ and the curve $C_2$ is the image of $E_2$.
These objects are not changed under the small resolution, but if
we consider them on $\wt W$ or $\wt Z$, we call them $\wt E_i$ and $\wt C_i$. 
Let $(\Theta_{\wt
Z})_{\wt C_1,\wt C_2}$ be the sheaf of those vector fields which are tangent
to  $\wt C_1$ and $\wt C_2$. Then $\wt\beta_* \Theta_{\wt W}=(\Theta_{\wt
Z})_{\wt C_1,\wt C_2}$ and $R^i\wt\beta_* \Theta_{\wt W}=0 $ for $i>0$. A
local computation shows that on $\wt Z$, we have the exact sequence
\[
0\lra (\Theta_{\wt Z})_{\wt C_1, \wt C_2} \lra \Theta_{\wt Z}
\lra \cn_{\wt C_1} \oplus \cn_{\wt C_2} \lra 0
\]
\cite[Proof of Prop.~2]{lb2}. The normal bundle of the curves $\wt C_1$
and $\wt C_2$ is $\co(1-n)\oplus \co(1-n)$, so the result follows from
Proposition \ref{raakw}: we have $H^j(\Theta_{\wt Z}) \cong
H^j( \Theta_{\wt W})=0 $ for $j\geq2$ and the exact sequence
\begin{equation}\label{thetawz}
0\lra H^1( \Theta_{\wt W}) \lra H^1(\Theta_{\wt Z})
\lra H^1(\cn_{\wt C_1}) \oplus H^1(\cn_{\wt C_2}) \lra 0\;.
\end{equation}
\end{proof}

For our construction, it is natural not only to consider the
deformation theory of $\wt Z$ itself, but also deformations of $\wt
Z$ together with two divisors $\wt S_1$, $\wt S_2$.

Let $l_1=\pr1^{-1}(s)$ be a line  in first the ruling of $Q$, which
is not blown down under the contraction $E_1\to C_1$. We assume that
$l_1$ is not a component of the discriminant curve $D$. Let
$R_1=\pi^{-1}(l_1)\subset W$ be its inverse image under the conic-bundle
projection $\pi:W\rightarrow Q$. If $l_1$ does not
pass through the singular points of $D$, we can consider $R_1$  also
as surface $\wt R_1$ on a small resolution $\wt W$. In the opposite
case, we denote by $\wt R_1$ the strict transform of our surface on
$\wt W$. By blowing down, we obtain the divisors $S_1:=\beta_* (R_1)$
on $Z$ and $\wt S_1:=\wt\beta_* (\wt R_1)$ on $\wt Z$. The surface
$\wt S_1$ contains the curve $\wt C_1$. Choosing a line $l_2$ in the
other ruling yields a surface $\wt S_2$. The choice of $l_1$ and
$l_2$ is determined by giving a point on the quadric $Q$.
Explicitly, with the coordinates of Equations \eqref{chartcoord},
a point $(a_0: a_1\semic b_0:b_1)$  
gives the line $l_1\colon a_1s_0-a_0s_1=0$ on the quadric. On $\wt W$,
the surface $\wt S_1$ is then given by 
$a_1v_{0\semic 0}-a_0v_{1\semic 0}=0$ and
$a_1v_{0\semic n-1}-a_0v_{1\semic n-1}=0$. It follows 
from Equations \eqref{chart} that the normal bundle of
$\wt C_1$ in $\wt S_1$ is  $\co(1-n)$.

In general, $\wt S_1$
and $\wt S_2$ are smooth and intersect transversally, but they may
have at most rational singularities if the line in question is
tangent to the curve $D\colon \vp=0$. 

We now look at deformations of the triple $(\wt Z,\wt S_1, \wt S_2)$. The
relevant deformation theory is  that of the map $f\colon \wt S_1\amalg \wt S_2
\to \wt Z$. Here, it is allowed to deform all the spaces $\wt S_1$, $\wt
S_2$, $\wt Z$ and the map $f$. Infinitesimal deformations are given
by $T^1_f$, while obstructions land in $T^2_f$. These groups occur
in the following long exact sequence, which relates them to
deformations of $\wt Z$, cf.~\cite[(4.5.2)]{pa} :
\[
\cdots \lra T^i{\left(\wt S_1\amalg \wt S_2/\wt Z\right)} \lra T^i_f
\lra H^i(\Theta_{\wt Z})  \lra
 \cdots \;.
\]
The local tangent cohomology $\ct^i{\left(\wt S_1\amalg \wt S_2/\wt
Z\right)}$ is concentrated in degree 1 and is equal to $\cn_{\wt
S_1}\oplus \cn_{\wt S_2}$. Therefore, $ T^i{\left(\wt S_1\amalg \wt
S_2/\wt Z\right)}= H^{i-1}(\cn_{\wt S_1})\oplus H^{i-1}(\cn_{\wt
S_2})$. On $\wt W$, the normal bundle of $\wt R_i$ is trivial, so by
blowing down the section $E_i$, we obtain $\cn_{\wt S_i}\cong \sier
{\wt S_i}(\wt C_i)$, $i=1,2$.

\begin{lemma}\label{lemfour}
The map $H^1(\Theta_{\wt Z}) \to H^1(\cn_{\wt S_1})\oplus
H^1(\cn_{\wt S_2})$ is surjective.
\end{lemma}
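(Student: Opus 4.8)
The plan is to exhibit the surjectivity fibrewise, i.e.\ to show that $H^1(\Theta_{\wt Z})\to H^1(\cn_{\wt S_i})$ is already surjective for each $i$ separately, and then observe that the two pieces are independent so that the direct sum is hit. Since $\cn_{\wt S_i}\cong\sier{\wt S_i}(\wt C_i)$, I first need to identify $H^1(\sier{\wt S_i}(\wt C_i))$. Working on $\wt W$ instead (where $\wt R_i$ has trivial normal bundle), the surface $\wt R_1$ is, away from the resolved points, just $\P^1\times\P^1$ blown up or a ruled surface over the line $l_1$; more precisely $R_1=\pi^{-1}(l_1)$ is a conic bundle over $\P^1$, hence a (possibly degenerate) Hirzebruch-type surface, and $\wt S_1$ is obtained by blowing down the section $\wt E_1\cap\wt R_1$ with self-intersection governed by the normal bundle computation $\cn_{\wt C_1/\wt S_1}=\co(1-n)$ noted just above. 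From the exact sequence $0\to\sier{\wt S_1}\to\sier{\wt S_1}(\wt C_1)\to\cn_{\wt C_1/\wt S_1}\to 0$ and $H^1(\cn_{\wt C_1/\wt S_1})=H^1(\P^1,\co(1-n))$, which has dimension $n-2$, together with $H^1(\sier{\wt S_1})=H^2(\sier{\wt S_1})=0$ (rationality of $\wt S_1$), I get $h^1(\cn_{\wt S_1})=n-2$, and likewise for $\wt S_2$.

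Next I would produce the sections of $H^1(\Theta_{\wt Z})$ mapping onto these. The natural source is the identification, from Proposition~\ref{raakw} and the exact sequence \eqref{thetawz}, of the cokernel part of $H^1(\Theta_{\wt Z})$ over $H^1(\Theta_{\wt W})$ with $H^1(\cn_{\wt C_1})\oplus H^1(\cn_{\wt C_2})$, where $\cn_{\wt C_i}=\co(1-n)\oplus\co(1-n)$, so $h^1(\cn_{\wt C_i})=2(n-2)$. The map $\Theta_{\wt Z}\to\cn_{\wt S_i}$ restricted to a vector field normal to $\wt C_i$ records how $\wt C_i$ moves inside $\wt S_i$; concretely, in the chart coordinates of \eqref{chartcoord}--\eqref{chart} the surface $\wt S_1$ is cut out by $a_1v_{0\semic 0}-a_0v_{1\semic 0}=0$ and $a_1v_{0\semic n-1}-a_0v_{1\semic n-1}=0$, so a normal vector field to $\wt C_1$ in $\wt Z$ has two components (the two summands of $\co(1-n)\oplus\co(1-n)$), and its image in $\cn_{\wt C_1/\wt S_1}=\co(1-n)$ is the combination tangent to the locus $a_1v_{0\semic\cdot}-a_0v_{1\semic\cdot}=0$. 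This exhibits $H^1(\cn_{\wt C_1})\twoheadrightarrow H^1(\cn_{\wt C_1/\wt S_1})\hookrightarrow H^1(\cn_{\wt S_1})$; chasing the diagram relating $\Theta_{\wt Z}$, $(\Theta_{\wt Z})_{\wt C_1,\wt C_2}$ and $\Theta_{\wt Z}|_{\wt S_i}$ then shows the composite $H^1(\Theta_{\wt Z})\to H^1(\cn_{\wt S_i})$ is surjective.

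The cleanest way to package all this is a commutative diagram of sheaves on $\wt Z$: the restriction $\Theta_{\wt Z}\to\Theta_{\wt Z}|_{\wt S_i}$ followed by $\Theta_{\wt Z}|_{\wt S_i}\to\cn_{\wt S_i}$, compared with the subsheaf $(\Theta_{\wt Z})_{\wt C_1,\wt C_2}$ of vector fields tangent to both curves. One checks $H^2(\Theta_{\wt Z})=0$ (Theorem~\ref{raakz}) kills the obstruction to lifting, and $H^1$ of the sheaf of vector fields tangent to $\wt S_i$ along $\wt C_i$ but not required tangent to $\wt S_i$ itself surjects onto $H^1(\cn_{\wt S_i})$ because the relevant $H^2$ on $\wt W$ vanishes. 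Finally, independence of the two summands is immediate since $l_1$ and $l_2$ lie in opposite rulings, so $\wt S_1$ and $\wt S_2$ meet in a single fibre and the two obstruction spaces $H^1(\cn_{\wt S_1})$, $H^1(\cn_{\wt S_2})$ receive contributions from disjoint parts of the cohomology.

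The main obstacle I anticipate is the bookkeeping around the singularities: if the line $l_i$ is tangent to $D$ then $\wt S_i$ acquires rational singularities and $\wt C_i$ meets the exceptional locus of $\sigma\colon\wt W\to W$, so the clean Hirzebruch-surface picture and the normal bundle sequence both need to be justified on the resolution rather than on $W$. I would handle this by noting that rational singularities do not affect $H^1(\sier{\wt S_i})$ or the relevant $R^1\sigma_*$ terms, so the cohomological computation is unchanged; the local contributions at the resolved points sit in degree-one local cohomology and are swept into $H^1(\cn_{\wt C_i})$ exactly as in the proof of Proposition~\ref{raakw}. Once that is absorbed, the surjectivity is a formal consequence of the vanishing $H^2(\Theta_{\wt Z})=0$ together with the explicit description of the restriction map on normal vector fields along $\wt C_i$.
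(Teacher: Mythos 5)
Your argument is correct and follows essentially the same route as the paper's proof: reduce $H^1(\cn_{\wt S_i})$ to $H^1$ of a degree-$(1-n)$ line bundle on $\wt C_i\cong\P^1$ using the rationality of $\wt S_i$, feed in the surjectivity of $H^1(\Theta_{\wt Z})\to H^1(\cn_{\wt C_1})\oplus H^1(\cn_{\wt C_2})$ coming from \eqref{thetawz}, and conclude by a commutative diagram. The only (harmless) slip is that the natural target of the restriction map from $\cn_{\wt C_i}$ is the quotient $\cn_{\wt S_i}|_{\wt C_i}$, not the subbundle $\cn_{\wt C_i/\wt S_i}$ --- the two happen to be isomorphic to $\co(1-n)$ here, so surjectivity is unaffected --- and the appeals to $H^2(\Theta_{\wt Z})=0$ are not actually needed.
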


\begin{proof} We first study the normal bundle of the surfaces
separately. According to our notational convention (from  the 
end of the Introduction), we drop indices.
The isomorphism $\cn_{\wt S}\cong \sier {\wt S}(\wt C)$ gives that
$H^i(\cn_{\wt S}(-\wt C))=H^i(\sier{\wt S} )=0$ for $i>0$ as ${\wt S}$
is a rational surface with at most rational singularities. From the
exact sequence
\[
0 \lra \cn_{\wt S}(-\wt C) \lra \cn_ {\wt S}\lra \cn_{\wt S}|_{\wt C} \lra 0\;,
\]
we conclude that $H^1(\cn_{\wt S})=H^1(\cn_{\wt S}|_{\wt C})$. This last
group also occurs in the normal bundle sequence of $\wt C$:
\[
0\lra \cn_{\wt C/\wt S} \lra \cn_{\wt C} \lra  \cn_{\wt S}|_{\wt C}  \lra 0\;.
\]
Now we return to the global situation and consider the commutative
diagram
\begin{equation}\label{comdia}
\begin{matrix}
H^1(\Theta_{\wt Z})
  &\lra&
  H^1(\cn_{\wt S_1})\oplus H^1(\cn_{\wt S_2})
  \\[1mm]
\Big\downarrow&&\mapdown{\cong}\\[2mm]
H^1(\cn_{\wt C_1})\oplus H^1(\cn_{\wt C_2})
  &\lra&
  H^1(\cn_{\wt S_1}|_{\wt C_1})\oplus H^1(\cn_{\wt S_2}|_{\wt C_2})
  \makebox[0pt][l]{$\;.$}
\end{matrix}
\hphantom{\;.}
\end{equation}
By the exact sequence  \eqref{thetawz}, the first vertical map is
surjective. 
\end{proof}

\begin{theorem}
\label{raakzss} The deformation space of the triple $(\wt Z,\wt S_1,
\wt S_2)$ is smooth of dimension $n^2+4n-8-\delta=g-r+6n-8$, except
when the curve $D\cup l_1 \cup l_2$ admits a one-dimensional
symmetry group, in which case the dimension is
$n^2+4n-7-\delta=g-r+6n-7$.
\end{theorem}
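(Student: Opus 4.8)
The plan is to substitute the cohomology computed so far into the long exact sequence
\[
\cdots \lra T^i\!\left(\wt S_1\amalg \wt S_2/\wt Z\right) \lra T^i_f \lra H^i(\Theta_{\wt Z})  \lra T^{i+1}\!\left(\wt S_1\amalg \wt S_2/\wt Z\right)\lra\cdots
\]
recorded above, using $T^i\!\left(\wt S_1\amalg \wt S_2/\wt Z\right)=H^{i-1}(\cn_{\wt S_1})\oplus H^{i-1}(\cn_{\wt S_2})$ (which therefore vanishes for $i=0$) and $\cn_{\wt S_j}\cong\sier{\wt S_j}(\wt C_j)$. First I would settle smoothness: by Theorem \ref{raakz} one has $H^j(\Theta_{\wt Z})=0$ for $j\ge2$, and by Lemma \ref{lemfour} the connecting map $H^1(\Theta_{\wt Z})\to H^1(\cn_{\wt S_1})\oplus H^1(\cn_{\wt S_2})=T^2\!\left(\wt S_1\amalg \wt S_2/\wt Z\right)$ is surjective. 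Hence in the segment $H^1(\Theta_{\wt Z})\to T^2\!\left(\wt S_1\amalg \wt S_2/\wt Z\right)\to T^2_f\to H^2(\Theta_{\wt Z})=0$ the middle arrow is zero, so $T^2_f=0$. Thus $\Def_f$ is unobstructed and the deformation space of the triple is smooth of dimension $\dim T^1_f$.

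Next I would compute the cohomology of the normal sheaves $\cn_{\wt S_j}$. Dropping the index as agreed, $\wt C\cong\P^1$ and $\cn_{\wt S}|_{\wt C}=\sier{\wt S}(\wt C)|_{\wt C}=\cn_{\wt C/\wt S}\cong\co(1-n)$, so the exact sequence
\[
0\lra \sier{\wt S}\lra \cn_{\wt S}\lra \co_{\P^1}(1-n)\lra 0,
\]
together with $h^0(\sier{\wt S})=1$ and $h^1(\sier{\wt S})=h^2(\sier{\wt S})=0$ (as in the proof of Lemma \ref{lemfour}, $\wt S$ being rational with at most rational singularities), gives $h^0(\cn_{\wt S})=1$ and $h^1(\cn_{\wt S})=n-2$. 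Feeding this, together with $h^1(\Theta_{\wt Z})-h^0(\Theta_{\wt Z})=n^2+6n-15-\delta$ from Theorem \ref{raakz}, into the exact sequence
\[
0\lra T^0_f\lra H^0(\Theta_{\wt Z})\lra H^0(\cn_{\wt S_1})\oplus H^0(\cn_{\wt S_2})\lra T^1_f\lra H^1(\Theta_{\wt Z})\lra H^1(\cn_{\wt S_1})\oplus H^1(\cn_{\wt S_2})\lra 0
\]
(the last arrow onto by Lemma \ref{lemfour}, and $T^2_f=0$) and taking the alternating sum of dimensions yields
\[
\dim T^1_f-\dim T^0_f=\bigl(n^2+6n-15-\delta\bigr)+2(3-n)=n^2+4n-9-\delta.
\]

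It then remains to pin down $\dim T^0_f$, which I expect to be the crux. From the exact sequence, $T^0_f$ is the space of global vector fields on $\wt Z$ tangent to $\wt S_1$ and to $\wt S_2$; since $\wt Z$ is compact this is the Lie algebra of the subgroup of $\mathrm{Aut}(\wt Z)$ stabilising both divisors. As in the proof of Proposition \ref{raakw}, $H^0(\Theta_{\wt Z})\cong H^0(\Theta_{\wt W})=T^0_W$ is spanned by the generator of the $\C^\ast$-action $\la\cdot(w_0,w_1,w_2)=(w_0,\la w_1,\la^{-1}w_2)$ on the conic bundle, together with the (at most one-dimensional) algebra of automorphisms of $Q$ preserving the discriminant curve $D$. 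The $\C^\ast$-action is trivial on $Q$, hence preserves $R_j=\pi^{-1}(l_j)$ and therefore $\wt S_j$; an automorphism of $Q$ fixing $D$ is tangent to $\wt S_j$ exactly when it fixes the line $l_j$. Consequently $\dim T^0_f=1$ in general, while $\dim T^0_f=2$ precisely when there is a one-parameter group of automorphisms of $Q$ fixing $D$, $l_1$ and $l_2$, i.e.\ when $D\cup l_1\cup l_2$ admits a one-dimensional symmetry group. Combining with the previous paragraph, $\dim T^1_f=n^2+4n-8-\delta$ in general and $\dim T^1_f=n^2+4n-7-\delta$ in the symmetric case, and the relation $1+\delta+g=r+(n-1)^2$ rewrites these as $g-r+6n-8$, respectively $g-r+6n-7$. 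The main obstacle is exactly this last identification of $T^0_f$: it forces one to describe the automorphisms of $\wt Z$ and to decide when they can be chosen to respect both auxiliary surfaces, which is where the exceptional case in the statement comes from; the remaining steps are bookkeeping with the exact sequences already established.
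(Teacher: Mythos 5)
Your proposal is correct and follows essentially the same route as the paper: the same long exact sequence for $T^i_f$, smoothness from Lemma \ref{lemfour} together with $H^2(\Theta_{\wt Z})=0$, the computation $h^0(\cn_{\wt S_i})=1$, $h^1(\cn_{\wt S_i})=n-2$ from $\cn_{\wt S_i}\cong\sier{\wt S_i}(\wt C_i)$ and the self-intersection $1-n$, and the identification of $\dim T^0_f$ via which symmetries of $D$ also preserve $l_1\cup l_2$. The only cosmetic difference is that you phrase the $T^0_f$ analysis in terms of vector fields tangent to both divisors, while the paper phrases it as the kernel of $H^0(\Theta_{\wt Z})\to H^0(\cn_{\wt S_1})\oplus H^0(\cn_{\wt S_2})$; these are the same argument.
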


\begin{proof}
We look at the long exact sequence containing $T^1_f$:
\begin{multline*}
 0 \lra T^0_f \lra H^0(\Theta_{\wt Z})  \lra
H^0(\cn_{\wt S_1})\oplus H^0(\cn_{\wt S_2}) \lra {}\\{}
\lra T^1_f \lra H^1(\Theta_{\wt Z})  \lra H^1(\cn_{\wt S_1})\oplus
H^1(\cn_{\wt S_2}) \lra T^2_f \lra 0 \;.
\end{multline*}
If $h^0(\Theta_{\wt Z}) =2$, then the map $H^0(\Theta_{\wt Z})  \lra
H^0(\cn_{\wt S_1})\oplus H^0(\cn_{\wt S_2})$ is the zero-map only if
the symmetries of the curve $D$ also preserve $l_1\cup l_2$.
Otherwise the image is one-dimensional.
If $h^{0}(\Theta_{\wt Z}) = 1$, this map is the zero-map again. This shows
that $\dim T_{f}^{0} = 1$, except when the curve $D\cup l_{1}\cup l_{2}$ admits
a one-dimensional symmetry group, in which case $\dim T_{f}^{0} = 2$.

The curve $\wt C_i$ has self-intersection $1-n$ on $\wt S_i$, so also the
degree of $\cn_{\wt S_i}|_{\wt C_i}$ is $1-n$ ($i=1,2$). Therefore,
$H^0(\cn_{\wt S_i})\cong H^0(\sier {\wt S_i})$ is one-dimensional,
corresponding to moving the line $l_i$,  which obviously induces a
deformation of $f$ (which can be trivial if it comes from an
automorphism of $D$). Furthermore, $\dim H^1(\cn_{\wt S_i})=n-2$. The
result now follows from Lemma \ref{lemfour} and Theorem \ref{raakz}.
\end{proof}

For $n\ge4$, the dimension of the deformation space of the triples
$(\wt Z, \wt S_1, \wt S _2)$ is less than the dimension of the
deformation space of the \KLB manifold $\wt Z$ itself. 
In the absence of extra symmetry, these dimensions are the same for $n=3$,
but the map  $T^1_f\to H^1(\Theta_{\wt Z})$ is not surjective, as it
has always  a non-trivial kernel (of dimension
$2+\dim T^{0}_{f} - h^{0}(\Theta_{\wt Z})$, which is either $1$ or $2$).
In fact, for all $n$, the codimension of the image  
of $T_{f}^{1}$ in $H^{1}(\Theta_{\wt Z})$ 
is $h^1(\cn_{\wt C_1})+h^1(\cn_{\wt C_2}) = 2(n-2)$.

We now compute the image of $T_{f}^{1}$ in $H^{1}(\Theta_{\wt Z})$.
It is the kernel of the
map $H^1(\Theta_{\wt Z})  \lra H^1(\cn_{\wt S_1})\oplus
H^1(\cn_{\wt S_2})$, which is the same as the kernel of
the composed map $H^1(\Theta_{\wt Z})  \lra 
H^1(\cn_{\wt C_1})\oplus H^1(\cn_{\wt C_2}) \lra
  H^1(\cn_{\wt S_1}|_{\wt C_1})\oplus 
    H^1(\cn_{\wt S_2}|_{\wt C_2})$, by the diagram
\eqref{comdia}. The kernel of the first surjection consists, by the exact 
sequence \eqref{thetawz}, of the deformations obtained by changing the 
equation of $W$.
For the second map,  we explicitly
determine the restriction map $H^1(\cn_{\wt C_1})\to H^1(\cn_{\wt
S_1}|_{\wt C_1})$, the case with $i=2$ being similar. We use the
coordinates of  Equations \eqref{chartcoord}.
We cover $\wt C_1$ with two coordinate patches $\{t_0\neq0\}$ and
$\{t_1\neq0\}$ and compute \v Cech cocycles.
In the chart $t_0=1$, we have coordinates 
$v_{0\semic 0}$, $v_{1\semic 0}$ and $t_1$,
with $\wt C_1$ given by $v_{0\semic 0}=v_{1\semic 0}=0$. 
A basis of $H^1(\cn_{\wt C_1})$
is conveniently represented by elements in $H^1(\Theta_{\wt
Z}|_{\wt C_1})$. We take the following  \v Cech cocycles:
\[
\frac1{t_1^i}\frac\partial{\partial v_{0\semic 0}}, \quad
\frac1{t_1^i}\frac\partial{\partial v_{1\semic 0}}, \qquad i=1,\dots,
n-2\;.
\]
The kernel of $H^1(\cn_{\wt C_1})\to H^1(\cn_{\wt
S_1}|_{\wt C_1})$ lies in $H^1(\cn_{\wt
C_1/\wt S_1})$. It is represented by
cocycles of vector fields, which are tangent to $\wt S_1$.
This means that evaluation on  a defining equation for $\wt S_1$
yields zero, in the appropriate cohomology group, which is $H^1(\sier
{\wt C_1}\otimes \sier{\wt S_1}(\wt C))$. The choice of a pair of lines
$(l_1,l_2)$ is determined by a point $(a_0: a_1\semic b_0:b_1)$ on
$Q$. We start out from the line $a_1s_0-a_0s_1=0$ on the quadric
$Q$. In our chart, the surface $\wt S_1$ is then given by 
$a_1v_{0\semic 0}-a_0v_{1\semic 0}=0$, as remarked above.
We compute the action of a vector field:
\[
\sum\left( \frac{\tau_{0\semic i}}{t_1^i}\frac\partial{\partial v_{0\semic 0}}+
\frac{\tau_{1\semic i}}{t_1^i}\frac\partial{\partial v_{1\semic 0}}\right)
(a_1v_{0\semic 0}-a_0v_{1\semic 0}) = \sum
\frac{a_1\tau_{0\semic i}-a_0\tau_{1\semic i}}{t_1^i} \;,
\]
where $\tau_{0\semic i}$, $\tau_{1\semic i}$ are coordinates on
$H^1(\cn_{\wt C_1})$. The kernel of the map $H^1(\cn_{\wt C_1})\to
H^1(\cn_{\wt S_1}|_{\wt C_1})$ is therefore given by the equations
$a_1\tau_{0\semic i}-a_0\tau_{1\semic i}=0$. For $l_2$, we similarly find
equations $b_1\sigma_{0\semic i}-b_0\sigma_{1\semic i}=0$.

For $n=3$, we have only one equation 
$a_1\tau_{0\semic 1}-a_0\tau_{1\semic 1}=0$.
Considered in $\wt C_1\times H^1(\cn_{\wt C_1}) \cong \P^1 \times \C^2$,
it describes
the blow-up of the origin in $ H^1(\cn_{\wt C_1}) $.
Likewise, the  blow-up of the origin in $ H^1(\cn_{\wt C_2}) $
is given in $\wt C_2\times H^1(\cn_{\wt C_2})$ by
$b_1\sigma_{0\semic 1}-b_0\sigma_{1\semic 1}=0$.
Therefore, the union of the images of  $T_{f}^{1}$ for all choices of 
$\wt S_{1} \cap \wt S_{2}$ is $H^{1}(\Theta_{\wt Z})$, provided that the 
discriminant curve $D$ does not contain a line.

For $n\ge4$, the union of all images for all pairs of lines
(again under the assumption that $D$ does not contain a line)
is given by the equations
\[
\Rank \begin{pmatrix} \sigma_{01} & \dots & \sigma_{0,n-2} \\
                \sigma_{11} & \dots & \sigma_{1,n-2} \end{pmatrix}
  \leq 1 \;,  \quad
\Rank \begin{pmatrix}\tau_{01} & \dots & \tau_{0,n-2} \\
                \tau_{11} & \dots & \tau_{1,n-2} \end{pmatrix}
  \leq 1 \;.
\]
They describe the
tangent cone to the image of the deformation space of triples. 

For $n=3$, the main result of this paper is the construction
of a family, which is versal for deformations of triples
$(\wt Z,\wt S_1, \wt S_2)$, along the locus of \KLB manifolds.
In Section \ref{examples}, we consider several
examples in which we explicitly describe a global family. In these examples, 
the structure of the map to the deformation space of  $\wt Z$, 
transverse to the locus  of \KLB manifolds, is exactly what the above computation
suggests: the map is ${\rm Bl}_0 \C^2 \times {\rm Bl}_0 \C^2 \to
\C^2 \times \C^2$ with fibre over the origin $Q=\P^1\times\P^1$.
On the deformation spaces, we have a $\C^*$  action. 
The quotient
of $\C^2 \times \C^2$ by this $\C^*$-action is a three-dimensional
ordinary double point. Such singularities can be encountered
on compactifications of moduli spaces of double solids, or of
the branch quartics (e.g.\ for Kummer surfaces).

\section{A versal family for $n=3$}
In this section, we construct the family, which gives the wanted
deformation of \KLB manifolds with $n=3$, or more precisely
of triples $(\wt Z,\wt S_1, \wt S_2)$, and prove versality of the
constructed family.

Let $\wt Z$ be an \KLB manifold. It is a fibre
of a family $\wt Z_{\Lambda_0}\to\Lambda_0$, as constructed at the
end of Section~\ref{sec:KL}. It is defined from a conic bundle
bundle $W$, lying in a family $W_{\Lambda_0}\to\Lambda_0$,  with an equation
of the form \eqref{klb}, that is,
$w_1w_2-\vp w_0^2=0$,
in
\[
\P \left(\sier Q\oplus \sier Q (-2,-1)\oplus \sier
Q(-1,-2)\right)\;.
\]
The singularities of the curve $D\colon \vp=0$ have only smooth
branches.  We also
choose two surfaces $\wt S_1$, $\wt S_2$, by picking a pair of
intersecting lines on $Q$, that is,  by choosing a tangent plane.
We do not allow\footnote{This assumption is made to exclude
non-isolated singularities along such a line in our construction.
Presumably, our arguments can be extended to this case.} 
that one of these lines  is a component of $D$. Then each line
on $Q$ intersects $D$ in 3 points (counted
with multiplicity).

We add the chosen lines to the discriminant curve, which therefore
becomes a curve of type $(4,4)$. The successful idea is, to consider the
resulting equation in the bundle $\P \left(\sier Q\oplus \sier Q
(-2,-2)\oplus \sier Q(-2,-2)\right)$, because this bundle can be 
extended to $\P^3$. 

We therefore choose an embedding of $Q=\P^1\times \P^1$ as
smooth quadric in $\P^3$, whose  equation again will be called  $Q$.
The section $\vp\in H^0(Q,\sier Q(3,3))$ can be written as
restriction of a cubic form $K\in H^0(\P^3,\sier {\P^3}(3))$. Such a
form is not unique, as it can be altered with a multiple of $Q$, but
for now we choose a lift; we will end up by considering all possible
choices. A section $\vp_1\in H^0(Q,\sier
Q(1,1))$  has a unique lift to a linear form 
$K_1\in H^0(\P^3,\sier {\P^3}(1))$.
If $\vp$ is of the form 
$\vp_1\vp_2$ with $\vp_2\in H^0(Q,\sier
Q(2,2))$, we  lift $\vp$ to a reducible form $K=K_1K_2$. In particular, if
$\vp$ is the product of three forms of type $(1,1)$, we take $K$ as a 
product of linear factors. Let $L$ be an equation of the chosen
tangent plane.

We now work with  the $\P^{2}$-bundle
\[
\P(\co\oplus \co(-2)\oplus
\co(-2))\rightarrow \P^{3}
\]
over $\P^{3}$, with fibre coordinates $(y_{0}:y_{1}:y_{2})$
corresponding to the three direct summands in this order. Let $Y$ be
the subspace given by the equations
\begin{equation}\label{yyy}
\begin{split}
y_{1}y_{2}-KLy_{0}^{2}&=0\;,\\
Q&=0\;.
\end{split}
\end{equation}
The space $Y$ is again a conic bundle over the quadric $Q$, but we
have already written it in terms of $\P^3$.

\begin{lemma}\label{lem:birational}
The conic bundle $Y$ is birational to the conic bundle $W$.
\end{lemma}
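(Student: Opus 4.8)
The plan is to exhibit an explicit birational map between $Y$ and $W$ by comparing their descriptions as conic bundles over the \emph{same} quadric $Q$. Both $Y\to Q$ and $W\to Q$ are conic bundles, so it suffices to match their generic fibres, i.e.\ to find a fibrewise linear isomorphism over a dense open subset of $Q$. The conic bundle $W$ sits in $\P\left(\sier Q\oplus \sier Q(-2,-1)\oplus \sier Q(-1,-2)\right)$ with equation $w_1w_2-\vp w_0^2=0$ where $\vp$ has type $(3,3)$, while $Y$, viewed over $Q$, sits in $\P\left(\sier Q\oplus \sier Q(-2,-2)\oplus \sier Q(-2,-2)\right)$ with equation $y_1y_2-(K L)|_Q\, y_0^2=0$ where $(KL)|_Q=\vp\cdot(L|_Q)$ has type $(4,4)$ and $L|_Q=\ell_1\ell_2$ splits as a product of two $(1,1)$-forms cutting out the two lines $l_1,l_2$.

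First I would write the change of fibre coordinates. The idea behind adding the tangent plane is precisely that the extra factor $L|_Q=\ell_1\ell_2$ can be absorbed into the fibre variables: setting $y_0=w_0$, $y_1=\ell_1 w_1$, $y_2=\ell_2 w_2$ (or symmetrically $y_1 = \ell_1\ell_2 w_1$, $y_2 = w_2$, etc.) transforms $w_1w_2-\vp w_0^2=0$ into $y_1y_2-\ell_1\ell_2\vp\, w_0^2 = y_1y_2 - (KL)|_Q\, y_0^2 = 0$. I would check that this substitution is compatible with the $(\C^\ast)^3$-gradings on the two $\P^2$-bundles described in the Remark after \eqref{klb}: multiplying $w_1$ by $\ell_1\in H^0(\sier Q(1,1))$ shifts the weight from $ca^{n-1}b=ca^2b$ (for $n=3$) to $ca^2b\cdot ab = ca^3b^2$, and I must confirm this matches the $\sier Q(-2,-2)$-summand in $\ce_Q$ for $Y$ — equivalently, that the corresponding line bundles on $Q$ differ exactly by $\sier Q(1,1)$ on the $w_1$-slot and by $\sier Q(1,1)$ on the $w_2$-slot, which is forced since $(-2,-1)+(1,1)=(-1,0)$ — so one has to be a little careful about which of the two lines is absorbed into which fibre coordinate and possibly twist by an overall line bundle pulled back from $Q$. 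Since $\ell_1$ and $\ell_2$ each vanish only on a curve, the substitution is invertible away from $l_1\cup l_2$, hence defines a birational map.

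I would then observe that this map is manifestly defined over $Q$ (it is the identity on the base) and is fibrewise a projective-linear isomorphism on the open set $Q\setminus(l_1\cup l_2)$, so it is a birational map $W\dashrightarrow Y$ of conic bundles; no resolution of singularities enters because we only need a birational equivalence. The main obstacle is purely bookkeeping: getting the bihomogeneous weights to balance so that the substitution genuinely descends to a map of the correct $\P^2$-bundles rather than merely a rational map of the affine charts — in particular ensuring the two chosen lines $l_1,l_2$ are correctly assigned to the two rulings so that $\ell_1$ is the $(1,1)$-form whose associated twist repairs the $w_1$-slot and $\ell_2$ the one for the $w_2$-slot (this is where the hypothesis that neither line is a component of $D$ is used, so that $\ell_i$ and $\vp$ share no common factor and the map is genuinely birational, not just dominant onto a proper subvariety). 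Once the weight calculation is carried out this is immediate; I would present the explicit substitution and then simply verify the weights.
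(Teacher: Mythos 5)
Your overall strategy is exactly the paper's: absorb the extra factor $L|_Q$ of the discriminant into the fibre coordinates, using the fact that $L$ is a tangent plane so that $L|_Q$ splits as a product of the equations of the two lines $l_1$, $l_2$. However, the substitution as you have written it is wrong, and the error is precisely in the bookkeeping you defer. The two lines lie one in each ruling, so $L|_Q$, a form of type $(1,1)$, factors as $\psi_1\psi_2$ with $\psi_1\in H^0(\sier Q(1,0))$ and $\psi_2\in H^0(\sier Q(0,1))$ --- \emph{not} as a product of two $(1,1)$-forms, since a $(1,1)$-form cuts out a conic, not a line. With your $\ell_i$ of type $(1,1)$ the substitution $y_1=\ell_1w_1$, $y_2=\ell_2w_2$ turns $w_1w_2-\vp w_0^2$ into $y_1y_2-\ell_1\ell_2\vp\,w_0^2$ with $\ell_1\ell_2\vp$ of type $(5,5)$, whereas $(KL)|_Q=\vp\cdot L|_Q$ has type $(4,4)$; and your own weight computation $(-2,-1)+(1,1)=(-1,0)$ visibly fails to land on the required summand $\sier Q(-2,-2)$, although you do not draw that conclusion.

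The correct map, which is the one the paper writes down, is $(y_0:y_1:y_2)=(w_0:\psi_2w_1:\psi_1w_2)$, with the cross-assignment: the $w_1$-slot $\sier Q(-2,-1)$ needs a twist by $\sier Q(0,-1)$ to reach $\sier Q(-2,-2)$, i.e.\ the coordinate $w_1$ is multiplied by the $(0,1)$-form $\psi_2$, and symmetrically for $w_2$. Then $y_1y_2=\psi_1\psi_2\,w_1w_2=L|_Q\cdot\vp\,w_0^2=(KL)|_Q\,y_0^2$ and the weights balance. Everything else in your plan (identity on the base, fibrewise linear isomorphism off $l_1\cup l_2$, hence birational) then goes through and agrees with the paper; note only that the ambiguity in the map is the choice of the splitting $L|_Q=\psi_1\psi_2$ up to scalars, which the paper observes is absorbed by the $\C^*$-action on $W$, and that the hypothesis that neither line is a component of $D$ is not actually what makes the map birational --- it is needed elsewhere, to keep the singularities introduced by $L$ isolated.
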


\begin{proof}
Given the  linear form $L$, we choose two sections $\psi_1\in
H^0(\sier Q(1,0))$ and $\psi_2\in H^0(\sier Q(0,1))$ such that their
product is the restriction of $L$ to $Q$. We now define a rational
map  between bundles over $Q$ by the formula
\[
(y_0:y_1:y_2)=(w_0:\psi_2w_1:\psi_1w_2)\;.
\]
Note that the map depends on the choice of  $\psi_1$ and $\psi_2$,
but different maps are connected by the
$\C^*$-action on $W$. It is straightforward to check that this map is indeed
birational. A detailed analysis of it can be found after Lemma
\ref{lem:smooth}.
\end{proof}

We introduce deformation variables $\al1$ and $\al2$ and consider the
equations 
\begin{equation}\label{ydef}
\begin{split}
y_{1}y_{2}-KLy_{0}^{2}&=0\\
\al2y_{1}+\al1y_{2}-Qy_{0}&=0
\end{split}
\end{equation}
on the $\P^2$-bundle $\P(\co\oplus \co(-2)\oplus \co(-2))\rightarrow \P^{3}$.
From equations of this form, we eventually get a versal family for 
our \KLB manifold. This involves varying $K$ and $L$ in an appropriate
way; in general, the linear form $L$ 
does no longer define a tangent plane to the quadric $Q$. 

We stress the fact that Equations \eqref{ydef} do not define a deformation of
the space $Y$: the fibre over $\al1=\al2=0$ (for a specific value of $KL$) 
is reducible, having $Y$ as one of its components only. On the other hand, the
general fibre is a double solid of the type we are after:
if $\al1\neq0$ and $\al2\neq0$, we eliminate say $y_1$ and find
$\al1y_{2}^2-Qy_{0}y_{2}+\al2KLy_{0}^{2}=0$, which is a  double
cover of $\P^3$, branched along the quartic with equation
\[
Q^{2}-4\al1\al2KL=0\;.
\]

\subsection{The family $\cy \to\Pi$}
Our first task is to define a family, containing $Y$ (as a 
component of a fibre), with equations of the form \eqref{ydef}. 
The  deformation $\wt\cz\to\Pi$ of $\wt Z$ will
be constructed from it by birational transformations.

We start more generally from equations  
\begin{equation}\label{yphi}
\begin{split}
y_{1}y_{2}-\Phi y_{0}^{2}&=0\;,\\
\al2y_{1}+\al1y_{2}-Qy_{0}&=0\;,
\end{split}
\end{equation}
with $\Phi\in H^0(\P^3,\sier {\P^3}(4))$.
These equations define a family of complete intersections.
To have an explicit description of the base, we want to consider the 
coefficients of $\Phi$ as parameters. We therefore look 
at Equations \eqref{yphi} as a family 
over $\C^2 \times H^0(\P^3,\sier {\P^3}(4))$. 
In order to get a family involving $V=\P(H^0(\P^3,\sier {\P^3}(4))^*)$, 
we have to be
careful. We shall not end up with a family over $\C^2 \times V$, but
over a non-trivial vector bundle $\cv$ over $V$. 
The family of $\P^{2}$-bundles, in which Equations \eqref{yphi} make
sense, is obtained as the quotient of  
$\left(\C^{4}\setminus 0\right) \times \left(\C^{3}\setminus 0\right) \times
\C^2 \times \left(H^0(\sier {\P^3}(4))\setminus 0\right)$ under the
$\left(\C^{\ast}\right)^{3}$-action given by
\begin{multline*}\qquad
(a,b,\lambda)\cdot(x_{0},x_{1},x_{2},x_{3}\semic y_{0},y_{1},y_{2}\semic
\alpha_{1},\alpha_{2}\semic \Phi) 
\\{}=
(ax_{0},ax_{1},ax_{2},ax_{3}\semic by_{0},\lambda ba^{2}y_{1},ba^{2}y_{2}\semic
\alpha_{1},\lambda^{-1}\alpha_{2}\semic \lambda \Phi)\;.
\qquad
\end{multline*} 
In this context, the equation $y_{1}y_{2} - \Phi y_{0}^{2}=0$ at the
point $(ax\semic y\semic \alpha\semic \Phi)$, for example, is to be read as
$y_{1}y_{2} - \Phi(ax) y_{0}^{2}=0$. With this in mind, it is easy to see
that the zero set of Equations (\ref{yphi}) is invariant under this
action; hence, they define a family of singular spaces over the quotient of
$\C^2 \times \left(H^0(\sier {\P^3}(4))\setminus 0\right)$  
under the $\C^{\ast}$-action given by
$\lambda\cdot(\alpha_{1},\alpha_{2}\semic \Phi) =
(\alpha_{1},\lambda^{-1}\alpha_{2}\semic \lambda \Phi)$.
This quotient is the vector bundle $\cv$ of rank two over $V$ whose
sheaf of sections is $\co\oplus\co(-1)$. Again, we may let
$\lambda\in\C^{\ast}$ act on $y_{2}$ and $\al1$, as opposed to $y_{1}$ and
$\al2$, but this produces an isomorphic family.

For $\Phi=KL$, we want to consider the coefficients
of $K$ and $L$ as coordinates.
Note that the multiplication map 
$\P(H^0(\sier{}(3))^*)\times\P(H^0(\sier{}(1))^*)
\lra \P(H^0(\sier{}(4))^*)$
is not an embedding, it factors as a Segre embedding followed by a linear
projection. 
On the open set where the first component is an irreducible cubic,
the multiplication map is an isomorphism onto its image. We denote
this image by $V_{3;1}\subset V=\P(H^0(\sier{}(4))^*)$, and
by  $\cv_{3;1}$, the restriction of the vector bundle $\cv$ to it.

On the open set $V_{2,1}\subset
\Im \left\{\P(H^0(\sier{}(2))^*)\times\P(H^0(\sier{}(1))^*)
\lra \P(H^0(\sier{}(3))^*)\right\}
$ where $K$ splits as the product of a linear form
and an irreducible quadric, the multiplication map 
$\P(H^0(\sier{}(3))^*)\times\P(H^0(\sier{}(1))^*)
\lra \P(H^0(\sier{}(4))^*)$
is a branched
covering onto its image. As the linear form $L$ plays a special
role in our construction, we do not define $V_{2,1;1}$ as this
image, but as the subset $V_{2,1}\times\P(H^0(\sier{}(1))^*) \subset
\P(H^0(\sier{}(3))^*)\times\P(H^0(\sier{}(1))^*)$,
and $\cv_{2,1;1}$ as the pull-back of the vector bundle $\cv$ to it.
Likewise, we define $V_{1^3;1}$ as the subset  
$V_{1^3}\times\P(H^0(\sier{}(1))^*)$, where
$V_{1^3}= \Im \left\{\Sym^3 \P(H^0(\sier{}(1))^*)
\lra \P(H^0(\sier{}(3))^*)\right\}$ is the locus of 
the products of three linear forms, with vector bundle
$\cv_{1^3;1}$ over it.

\begin{remark}
The fibre over a general point of $\cv_{3;1}$ has six 
double points, but the codimension of $\cv_{3;1}$ in
$\cv$ is 12. There are trivial deformations of the fibre,
which do not preserve the splitting $\Phi=KL$. To see this,
observe that the defining ideal is not changed if
we add a multiple of the second equation in \eqref{yphi} to the
first one.
We take an arbitrary quadratic form $M$, compute
\begin{multline*}\qquad
y_1y_2-\Phi y_0^2-My_0(\al2y_{1}+\al1y_{2}-Qy_{0})
\\{}=
(y_1-\al1My_0)(y_2-\al2My_0)-(\Phi-M(Q-2\al1\al2M)-\al1\al2M^2) y_0^2=0
\qquad
\end{multline*} 
and rewrite the second equation as 
\[
\al2(y_1-\al1My_0)+\al1(y_2-\al2My_0)-(Q-2\al1\al2M)y_0=0\;.
\]
For small $\al1\al2$, the quadric $Q-2\al1\al2M$ is still non-degenerate
and by a coordinate transformation, we get equations of the
type \eqref{yphi}.
The upshot of this computation is that for fixed $\Phi$, the 
family  
\begin{equation}\label{trivdef}
\begin{split}
y_{1}y_{2}-(\Phi\circ h-MQ-\al1\al2M^2)y_{0}^{2}&=0\\
\al2y_{1}+\al1y_{2}-Qy_{0}&=0\;
\end{split}
\end{equation}
is trivial, where $h$ is the coordinate transformation 
depending on the product $\al1\al2$, satisfying
$Q \circ h= Q+2\al1\al2M$.
The family lies in $\cv_{3;1}$ if $M$ is divisible by $L\circ h$.
\end{remark}

In order to stratify according to singularities, we locate
them in the fibres  of the family \eqref{yphi}.

\begin{proposition}\label{prop:sing}
Above $\al1\al2=0$, in $y_0=0$, lie non-isolated singularities, which are
the intersections of the irreducible components 
of the fibres of $\cy \to \Pi$. The other singularities
above $\Lambda\colon \al1=\al2=0$ lie in $y_1=y_2=0$.
The isolated singularities above $\al1=0$ and $\al2=0$ are isomorphic to those
above $\al1=\al2=0$.
\end{proposition}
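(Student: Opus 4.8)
The plan is to locate the singularities of the fibres of \eqref{yphi} by applying the Jacobian criterion to the complete intersection they define inside the $\P^{2}$-bundle $\P(\co\oplus \co(-2)\oplus \co(-2))\to\P^{3}$. Write $F_{1}=y_{1}y_{2}-\Phi y_{0}^{2}$ and $F_{2}=\al2 y_{1}+\al1 y_{2}-Qy_{0}$ for the two defining equations, fix a value of $\Phi$, and set $\varphi:=\Phi|_{Q}$, so that $D\colon\varphi=0$ is the discriminant curve on the smooth quadric $Q$. All of the analysis takes place in the two relevant types of fibre chart: the divisor $\{y_{0}=0\}$ (where $y_{1}$ or $y_{2}$ is a unit) and its complement (the chart $y_{0}=1$, with affine fibre coordinates $y_{1},y_{2}$).

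On $\{y_{0}=0\}$ the equations restrict to $y_{1}y_{2}=0$ and $\al2 y_{1}+\al1 y_{2}=0$, and all base derivatives of $F_{1},F_{2}$ (being divisible by $y_{0}$) vanish there, so a singular point of the fibre lying on $\{y_{0}=0\}$ must already be singular in the fibre directions. If $\al1\al2\neq0$ the two restricted equations force $y_{1}=y_{2}=0$, which is impossible, so the fibre misses $\{y_{0}=0\}$. If $\al1=0$ (the case $\al2=0$ is the mirror image under $y_{1}\leftrightarrow y_{2}$, $\al1\leftrightarrow\al2$), then on $\{y_{0}=0\}$ the fibre equals $\{y_{0}=y_{1}=0\}$ when moreover $\al2\neq0$, and $\{y_{0}=y_{1}=0\}\cup\{y_{0}=y_{2}=0\}$ when $\al2=0$; solving $F_{2}=0$ off $\{y_{0}=0\}$ identifies the remaining component(s) — the hypersurface $M=\{Qy_{2}=\al2\Phi y_{0}\}$, respectively the conic bundle $B=\{Q=0,\ y_{1}y_{2}=\Phi y_{0}^{2}\}$ over $Q$ together with the second section. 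The sections $\{y_{0}=y_{i}=0\}\cong\P^{3}$ are pairwise disjoint, and each meets the other component exactly along $\{Q=0\}$, i.e.\ along a copy of $Q\cong\P^{1}\times\P^{1}$; the fibre is reducible, hence singular, along these copies of $Q$, and smooth at every other point of $\{y_{0}=0\}$, where a single smooth sheet passes. Thus the part of the singular locus in $\{y_{0}=0\}$ is precisely the union of the pairwise intersections of the irreducible components, which are nonisolated — the first assertion.

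For the singularities off $\{y_{0}=0\}$ I would work in the chart $y_{0}=1$. When $\al1=\al2=0$ the fibre coincides there with the conic bundle $B$ over the smooth quadric $Q$ (given by $Q=0$ and $y_{1}y_{2}=\Phi$); a singular point of $B$ forces $d(y_{1}y_{2}-\varphi y_{0}^{2})=0$, hence $y_{1}=y_{2}=0$, which is the second assertion, and — choosing local coordinates $(w,z_{2},z_{3})$ on $\P^{3}$ with $w=Q$, so that $\Phi|_{\{Q=0\}}=\varphi(z_{2},z_{3})$ — the germ there is the double suspension $\{y_{1}y_{2}=\varphi(z_{2},z_{3})\}$ of the plane curve singularity of $D$ at the relevant point of $\mathrm{Sing}(D)$. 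When $\al1=0$, $\al2\neq0$ one eliminates $y_{1}=Qy_{0}/\al2$ and rescales $y_{2}$ by the nonzero constant $\al2$ to see that, away from $\{y_{0}=0\}$, the fibre is the hypersurface $\{Q(x)y_{2}=\Phi(x)\}$, independently of $\al2$. Writing $\Phi=\varphi(z_{2},z_{3})+w\,h(w,z_{2},z_{3})$ in the coordinates $(w,z_{2},z_{3})$ with $w=Q$, the substitution $\tilde\eta=y_{2}-h(w,z_{2},z_{3})$ turns this hypersurface into $\{w\tilde\eta=\varphi(z_{2},z_{3})\}$, again the double suspension of the curve singularity of $D$. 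Hence every isolated singularity of a fibre over $\al1\al2=0$ is a double suspension of a singular point of the fixed discriminant curve $D$; in particular those of fibres over $\al1=0$ or $\al2=0$ are isomorphic to those over $\al1=\al2=0$, which is the third assertion.

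The one step that is not a routine Jacobian or elimination computation is the local identification of the germ in the case $\al1=0$, $\al2\neq0$: one must verify that $\{Q(x)y_{2}=\Phi(x)\}$ is singular exactly over the singular points of $D$ (equivalently $\nabla\Phi(q)\parallel\nabla Q(q)$, which by Euler's relation is precisely the condition that $q$ lie in $\mathrm{Sing}(D)$), and that $\tilde\eta=y_{2}-h$ is a genuine local coordinate change carrying the equation to the standard double suspension. The degenerate case $\nabla\Phi(q)=0$ is handled identically, the singular point then sitting at $y_{1}=y_{2}=0$ just as when $\al1=\al2=0$. Everything else reduces to the two chart computations above.
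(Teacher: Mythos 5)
Your proposal is correct and follows essentially the same route as the paper: the rank-$\le 1$ Jacobian condition for the complete intersection, a case split between $\{y_0=0\}$ (where the base derivatives vanish and one reads off the component intersections) and the chart $y_0=1$ (where nondegeneracy of $Q$ forces $y_1=y_2=0$ over $\Lambda$), and a local coordinate computation exhibiting the fibres over $\al1=0$, $\al2\neq0$ as the same double suspensions $uv=\varphi(z_2,z_3)$ of the singularities of the discriminant curve. The only cosmetic difference is that the paper eliminates the base coordinate $x_0=Q$ via the second equation while you eliminate $y_1$ and absorb the higher-order terms into $y_2$; the two normal-form computations are equivalent.
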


\begin{proof}
The condition for a singular point  is
\begin{equation}\label{sing}
\Rank \begin{pmatrix} y_2 & y_1 & -2y_0 \Phi & -y_0^2 \D\Phi \\
                 \al2 & \al1 & -Q & -y_0\D Q\end{pmatrix}  \leq 1\;.
\end{equation}
We first consider a neighbourhood of $y_0=0$.
We look at the chart $y_2=1$. The first equation
of \eqref{yphi} gives $y_1=\Phi y_0^2$. Equations \eqref{sing}
then reduce to  $\al1=
\al2 \Phi y_0^2$,  $Q= 2\al2\Phi y_0$ and 
$y_0(\D Q- \al2 y_0 \D \Phi)=0$. 
If $y_0=0$, then $\al1=0$ and $y_1=Q=0$. Indeed, for $\al1=0$,
the section $y_0=y_1=0$ of the 
$\P^2$-bundle $\P(\co\oplus \co(-2)\oplus \co(-2))\rightarrow \P^{3}$
is a component of the fibre, and the intersection
of the components is part of the singular locus. 
Likewise, for $\al2=0$,
the section $y_0=y_2=0$ is a component. 

The singularities
outside $y_0=0$ satisfy $\D Q- \al2 y_0 \D \Phi=0$ (in the chart
$y_2=1$).
As $Q$ is non-degenerate, none of these tend to $y_0=0$ if
$\al1$ tends to zero.
In fact, for  $\al1=\al2=0$, the singular
points with $y_0\neq0$ 
lie  in $y_1=y_2=0$ on $Q=\Phi=0$ and are given by the
condition that the differential $\D \Phi$ is proportional to $\D Q$. 
So, they lie above the singular points of the curve $\Phi=0$ on $Q\subset
\P^3$.

For $\al1=0$, $\al2\neq0$, we find that $y_1=0$, as $\D Q\neq0$.
So, the  singular points still satisfy $Q=\Phi=0$ and lie
above the singular points of the curve $\Phi=0$ on $Q\subset
\P^3$. The singularities are isomorphic to those over $\al1=\al2=0$.
This is best seen by a computation in local coordinates. We set
$y_0=1$. As $Q$ is non-degenerate, we can take $Q$ as coordinate
$x_0$. We write $\Phi(x_0,x_1,x_2)=\vp(x_1,x_2)+x_0\psi(x_0,x_1,x_2)$.
Equations \eqref{yphi} become $y_1y_2-\vp-x_0\psi=0$,
$\al2y_1-x_0=0$, from which $x_0$ can be eliminated:
\[
y_1(y_2-\al2\psi(\al2y_1,x_1,x_2))-\vp(x_1,x_2)=0\;.
\]
\end{proof}

The singularities outside $y_0=0$ for $\al1\al2=0$ 
are $cA_k$ points (not necessarily isolated).
Such singularities can be characterised as having 
corank at most two.
An isolated $cA_k$ singularity
has the form $w_1w_2-g(x,y)=0$, with $g(x,y)=0$ a 
reduced curve singularity, whose $\delta$-invariant is
an invariant of the $cA_k$ singularity.

We stratify the parameter spaces  $\cv_{3;1}$,
$\cv_{2,1;1}$ and $\cv_{1^3;1}$.
One stratum contains the non-isolated singularities
and the singularities of corank $3$. The
complementary open sets are stratified
according to the sum of the
$\delta$-invariants over all 
isolated $cA_k$  singularities outside $y_0=0$.
We consider only the open sets of connected components of strata,
where all isolated singularities admit small
resolutions, or equivalently that the corresponding plane
curve singularities have smooth branches.

Let $\wl \Pi$ be such an open set of a stratum in  $\cv_{3;1}$.
It can be defined locally at a point $p\in \wl \Pi$
using the induced map
of germs $\kappa_{3;1}\colon (\cv_{3;1}, p)
\to \prod \Def(\cy_{p}, x_{i})$, where the $x_{i}$ are the isolated
singularities of the fibre $\cy_{p}$. 
Let $S^\delta=\prod S^\delta_i$ be the product of the
$\delta$-constant strata in the deformation spaces
$\Def(\cy_{p}, x_{i})$. Then, $\wl \Pi=\kappa_{3;1}^{-1}(S^\delta)$.
If it is possible to choose $K$ reducible, we consider open sets of
strata in  $\cv_{2,1;1}$ or $\cv_{1^3;1}$, definable by
$\kappa_{2,1;1}$ or $\kappa_{1^3;1}$, respectively.

As  in Section \ref{sec:KL}
for \KLB manifolds, for each fibre in the family over a 
maximal open set $\wl \Pi$  in a stratum 
in  $\cv_{3;1}$, $\cv_{2,1;1}$ or $\cv_{1^3;1}$, 
we consider all possible small resolutions of the isolated singularities. 
Again, these fit together to a covering $\Pi  \to \wl \Pi$ with finite
fibres.  
Therefore, we have a simultaneous
small resolution of all isolated
singularities in  a family  
$\cy \to \Pi$ of complete intersections in
our $\P^2$-bundle over $\P^{3}$.

Let $\wt Z$ be an 
\KLB manifold, defined by an equation of the form
(\ref{klb}). The given choice of small resolutions defines 
a point in a stratum $\Lambda_0$, which is a covering of
a stratum $\wl\Lambda_0$ in $\P(H^0(\sier Q(3,3))^*)$. 

To get to Equations (\ref{yyy}), we
note that  restriction to
$Q$ defines a rational map
\[\P(H^0(\P^3,\sier{}(3))^*)\dashrightarrow \P(H^0(\sier Q(3,3))^*)\;,\]
which is not defined along the projective subspace 
$\P((Q\cdot H^{0}(\sier{}(1)))^{*})$, because $Q\cdot H^{0}(\sier{}(1))$ is
the kernel of the surjective restriction map $H^{0}(\sier{}(3)) \lra
H^{0}(\sier Q(3,3))$. 
This also implies that the fibres of the morphism 
$\P(H^0(\sier{}(3))^*) \setminus \P((Q\cdot H^{0}(\sier{}(1)))^{*})
\lra \P(H^0(\sier Q(3,3))^*)$ are isomorphic to $H^{0}(\sier{}(1))$.
We are forced to enlarge our family again, but we avoid this as
much as possible by considering three cases according to the way
$\vp\in H^0(\sier Q(3,3))$ splits.

Given $\Lambda_0$, 
we first consider a stratum 
$\wl\Lambda_{1}\subset\P(H^{0}(\sier{}(3))^{*})$, 
which maps surjectively onto
$\wl\Lambda_{0}$ under the restriction map
$\P(H^0(\sier{}(3))^*) \setminus \P((Q\cdot H^{0}(\sier{}(1)))^{*})
\lra \P(H^0(\sier Q(3,3))^*)$. 
In general, $\wl\Lambda_{1}$ is
the inverse image of $\wl\Lambda_{0}$, 
but if all members of $\wl\Lambda_{0}$
split as $\varphi_{1}\varphi_{2}\varphi_{3}$ or as $\varphi_{1}\psi$ with
$\varphi_{i} \in H^{0}(\sier Q(1,1)), \psi\in H^{0}(\sier Q(2,2))$,
then we restrict the inverse image to $V_{2,1}$, resp.~$V_{1^3}$.
In the case of three linear forms,  which occurs in the twistor case, 
we  have $\wl\Lambda_{1} \cong \wl\Lambda_{0}$. 

Now we add the linear form $L$ defining
a tangent plane to $Q$. 
We get  a stratum 
$\wl\Lambda_1\times Q^* \subset 
\P(H^0(\sier{}(3))^*)\times\P(H^0(\sier{}(1))^*)$ over
$\wl\Lambda_0\times Q^* \subset 
\P(H^0(\sier{Q}(3,3))^*)\times\P(H^0(\sier{Q}(1,1))^*)$. 
We require that the restriction of $KL$ to
$Q$ has only isolated singularities: we have to exclude that 
the tangent plane $L$ and the curve $K=Q=0$
both contain the same line. This can only happen if a curve 
$D$ in the stratum $\wl\Lambda_0$ (and therefore each curve $D$)
contains a line of one
of the rulings. In this case, we replace 
$\wl\Lambda_1\times Q^*$ by its open subset which is obtained by removing 
those pairs $(K, L)$ for which the tangent plane $L$ and the curve $K=Q=0$
both contain the same line. In all cases, we denote the resulting 
stratum by $\wl\Lambda$. According to our choices
it lies entirely  in  $V_{3;1}$,
$V_{2,1;1}$ or $V_{1^3;1}$.

We now go over to Equations  (\ref{ydef}), where we have the deformation
parameters $\al1$ and $\al2$. 
We identify each stratum $\wl\Lambda\subset
V_{3;1}$ (resp.\ $V_{2,1;1}$ or $V_{1^3;1}$) with its image in the zero 
section of $\cv_{3;1}$ (resp.\ $\cv_{2,1;1}$ or $\cv_{1^3;1}$).
The fibres over $\wl \Lambda$ are now reducible, consisting of $Y$ and the
sections $\ce_i\colon  y_0=y_i=0$ of the  $\P^2$-bundle.
The stratum $\wl \Lambda$ lies 
in a unique stratum $\wl \Pi$, and
inside $\wl\Pi$, it is given by the equations 
$\alpha_1=\alpha_2=0$.

The covering $\Lambda_0\to\wl\Lambda_0$  induces  a covering 
$\Lambda_1\to\wl\Lambda_1$ from which we obtain a covering
$\Lambda\to\wl\Lambda$. The linear form $L$ introduces new singularities,
which are to be resolved in a specific way, governed by the
birational isomorphism  between $Y$ and $W$ (Lemma \ref{lem:birational}).
We describe it in detail in Remark \ref{rmk-explicit}.
The resolution can be done simultaneously over the whole stratum $\Lambda$.
Therefore, we pick out a specific small resolution, and we get
an embedding $\Lambda \subset \Pi$. 

\begin{lemma}\label{lem:smooth}
The base space $\Pi$ is  smooth in a neighbourhood of
$\Lambda$. 
\end{lemma}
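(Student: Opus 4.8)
The plan is to show smoothness of $\Pi$ at points of $\Lambda$ by identifying the completed local ring of $\Pi$ with that of an honest (mini)versal base of the deformation functor we have under control, namely $\Def_{D/Q}$ restricted to its $\delta$-constant stratum, together with the trivially-smooth extra directions. Concretely, $\Pi$ is built in stages: first from $\wl\Lambda_1 \times Q^\ast$ (with the possible restriction to $V_{2,1;1}$ or $V_{1^3;1}$), then as the $\alpha_1=\alpha_2=0$ slice of a larger stratum $\wl\Pi$ in $\cv_{3;1}$ (or the other two bundles), then passing to the finite covering $\Lambda\subset\Pi$ that records choices of small resolutions. Since passing to a finite covering which is a local homeomorphism does not affect smoothness, it suffices to prove that $\wl\Pi$ is smooth near $\wl\Lambda$.

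First I would localise: pick a point $p\in\wl\Lambda\subset\wl\Pi$, with fibre $\cy_p$ having isolated $cA_{k}$ singularities $x_1,\dots,x_m$ outside $y_0=0$ (the non-isolated locus in $y_0=0$ has been excluded from the stratum), and use the map of germs $\kappa\colon(\cv_{3;1},p)\to\prod_i\Def(\cy_p,x_i)$ introduced before Proposition~\ref{prop:sing}, so that $\wl\Pi = \kappa^{-1}(\prod_i S^\delta_i)$. The key inputs are: (i) each $\Def(\cy_p,x_i)$ is the miniversal deformation of a double suspension of a plane curve singularity $\Gamma_i$ with smooth branches, hence is smooth, and its $\delta$-constant stratum $S^\delta_i$ is smooth of codimension $\delta_i$ (the reference \cite[Sect.~II.2.7]{GLS} quoted in Section~\ref{sec:KL}); (ii) $\kappa$ is a \emph{submersion} at $p$, i.e.\ the family of fibres of \eqref{yphi} (or rather \eqref{ydef} with $\alpha_1=\alpha_2=0$) over the full parameter space $\cv_{3;1}$ already realises a versal deformation of each isolated singularity, and the joint map to $\prod_i\Def(\cy_p,x_i)$ is surjective on tangent spaces. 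Granting (i) and (ii), $\wl\Pi=\kappa^{-1}(\prod S^\delta_i)$ is smooth of codimension $\sum_i\delta_i=\delta$, because the preimage of a smooth subgerm under a submersion is smooth.

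Statement (ii) is where the real work lies, and it is the analogue on the level of the total space over $\P^3$ of what Proposition~\ref{raakw} and Lemma~\ref{lem} establish downstairs: that $\delta$-constant deformations of $D$ are unobstructed and form a smooth codimension-$\delta$ stratum, via $H^1(\cn_\nu)=0$. I would argue (ii) as follows. The fibre $\cy_p$ over $\alpha_1=\alpha_2=0$ is (a component of) the conic bundle $Y$ with equation $y_1y_2-KLy_0^2=0$ on $Q$, birational to $W$ by Lemma~\ref{lem:birational}; its discriminant curve is $D\cup l_1\cup l_2$, of type $(4,4)$, whose singularities off $y_0=0$ correspond exactly to the $x_i$. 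By the same spectral-sequence computation as in Proposition~\ref{raakw} — now for the $\P^2$-bundle over $\P^3$ and the complete intersection \eqref{yyy} — deformations that change the cubic $K$ (equivalently the quartic $KL$, with $L$ varying in $Q^\ast$) surject onto the deformations of the discriminant curve, and the obstruction group for the relevant $\delta$-constant locus vanishes by the positivity argument of Lemma~\ref{lem} applied componentwise to the normalisation of $D\cup l_1\cup l_2$ (each component of $D$ has $\deg\nu^\ast\Theta_Q>0$, and the two lines are rational with the line bundle $\cn_\nu$ of degree $\ge -1 > 2g-2=-2$). Hence the induced map from (a transverse slice of) $\cv_{3;1}$ to $\prod_i$ of the $\delta$-constant strata is a submersion, which is exactly (ii). In the split cases one repeats the argument with $K=K_1K_2$ or $K=K_1K_2K_3$, using that $V_{2,1}$ and $V_{1^3}$ are locally isomorphic (resp.\ branched-covering) images of products of projective spaces of linear and quadric forms, so smoothness is unaffected.

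The main obstacle I anticipate is bookkeeping rather than a conceptual gap: one must be careful that the "extra" directions $\alpha_1,\alpha_2$ and the fibre-of-$\P^3$-over-$Q$ direction $L$ genuinely contribute smooth factors and do not create hidden obstructions — in particular that $\wl\Lambda$ sits inside $\wl\Pi$ as the honest slice $\alpha_1=\alpha_2=0$, and that adding $L$ to the discriminant does not spoil the smooth-branch hypothesis (this is exactly why the pairs $(K,L)$ with $L$ and $K=Q=0$ sharing a line were removed). A second delicate point is that the covering $\Lambda\to\wl\Lambda$ (choices of small resolutions) need only be a local homeomorphism, which by Riemann existence for smooth spaces, or simply because local homeomorphisms preserve the property of the local ring being regular, transports smoothness from $\wl\Pi$ to $\Pi$; one should also invoke here the uniqueness statement from Section~\ref{sec:KL} that over a $\delta$-constant deformation a simultaneous small resolution extending a given one exists and is unique. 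Once these points are checked, $\Pi$ is smooth along $\Lambda$ of dimension $\dim\cv_{3;1}-\delta$ (respectively with $\cv_{2,1;1}$ or $\cv_{1^3;1}$ in the split cases), which is the assertion of the lemma.
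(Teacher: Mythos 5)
Your step (ii) --- that $\kappa\colon(\cv_{3;1},p)\to\prod_i\Def(\cy_p,x_i)$ is a submersion --- is false, and this is precisely the difficulty the paper's proof is organised around. Deformations inside $\cv_{\alpha;1}$ must keep the quartic in the product form $KL$ (resp.\ $K_1K_2L$ or $L_1L_2L_3L$), so the nodes of the discriminant curve lying over the intersections of the factors on $Q$ can never be smoothed: in the case $\alpha=(1^3)$ every nearby fibre retains at least the twelve double points above the intersection of $Q$ with the edges of the tetrahedron, and in general the image of $\kappa$ is confined to (a set projecting into) the product $S^{\rm ei}$ of the equi-intersectional strata of the isolated singularities. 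Consequently ``preimage of a smooth subgerm under a submersion'' does not apply as you state it; what has to be proved is a transversality statement, namely that the image of $d\kappa$ together with the tangent space of $S^\delta$ spans the tangent space of $S^{\rm ei}$, not of $\prod_i\Def(\cy_p,x_i)$.

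The paper supplies exactly this missing ingredient. It defines the equi-intersectional stratum of a reducible curve singularity, proves it is smooth of codimension $\sum_{i<j}(C_i\cdot C_j)$, chooses a projection $\sigma$ onto a transversal slice of $S^\delta$ with $\sigma^{-1}(\sigma(S^{\rm ei}))=S^{\rm ei}$, and shows that $\sigma\circ\kappa_{\alpha;1}$ is a submersion onto $\sigma(S^{\rm ei})$ by a codimension count: Lemma \ref{lem} gives that $\wl\Lambda_0\times Q^*$ has codimension $\delta+7$ in the full linear system $\P(H^0(\sier Q(4,4))^*)$, so the full system submerges onto the slice $S^\perp$, and the codimensions of $W_{3;1}$, $W_{2,1;1}$, $W_{1^3;1}$ in that system ($6$, $10$, $12$) coincide with the codimensions of the corresponding $S^{\rm ei}$ because both equal the global intersection numbers of the components of $KL$. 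Your appeal to Lemma \ref{lem} only establishes smoothness of the $\delta$-constant stratum inside the full $(4,4)$ system; it says nothing about its intersection with the locus of split quartics, which is where the lemma actually lives. The surrounding framing in your write-up (localising at a point of $\wl\Lambda$, reducing to $\wl\Pi$, transporting smoothness through the covering $\Lambda\to\wl\Lambda$, the smooth extra directions $\alpha_1,\alpha_2$) agrees with the paper, but the central surjectivity claim is a genuine conceptual gap, not bookkeeping.
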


Before embarking on the proof in full generality, we treat the 
simpler case that all isolated singularities are ordinary
double points. As the question is local, it suffices to prove that 
$\wl \Pi$ is smooth at the points of $\wl\Lambda$. 
Of the three cases to be considered, we 
take the one relevant for twistor spaces.
We therefore suppose that the stratum $\wl \Pi$ lies in $\cv_{1^3;1}$.
We fix a  point $p\in\wl\Lambda\subset\wl\Pi$. Locally 
around $p$, the stratum is the fibre over $0$ of the map
$\kappa_{1^3;1}\colon (\cv_{1^3;1}, p)
\to \prod \Def(\cy_{p}, x_{i})$, as the $\delta$-const stratum
of an ordinary double point consists of the origin only. The map
$\kappa_{1^3;1}$ is not surjective, as all fibres over $\cv_{1^3;1}$
with only ordinary double points have at least $12$ such points: 
they lie above  the intersection of $Q$ with the edges
of the tetrahedron $L_1L_2L_3L_4$. We therefore divide the set $\Sigma$ 
of isolated singular points into two, the first, $\Sigma_1$, consisting
of these $12$ points, while the remaining $\delta-5$ double points
make up $\Sigma_2$ (here $\delta$ is the number of double points 
of the \KLB manifold we started with). The image of
$\cv_{1^3;1}$ is contained in $0\times\prod_{\Sigma_2} \Def(\cy_{p}, x_{i})
\subset \prod_{\Sigma_1} \Def(\cy_{p}, x_{i})
\times\prod_{\Sigma_2} \Def(\cy_{p}, x_{i})$. To prove smoothness
of $\wl \Pi$ at $p$, we establish that 
$\kappa_{1^3;1}\colon (\cv_{1^3;1}, p)
\to 0\times\prod_{\Sigma_2} \Def(\cy_{p}, x_{i})$ is 
a submersion. In fact, this already holds for the restriction
of $\kappa_{1^3;1}$ to the zero section $(V_{1^3;1},p)$
of $(\cv_{1^3;1},p)$. 
We factor this restriction through $(W_{1^3;1},[p])\subset 
(\P(H^0(\sier{Q}(3,3))^*)\times\P(H^0(\sier{Q}(1,1))^*),[p])$,
where $W_{1^3;1}$ is defined analogously to $V_{1^3;1}$, and
$[p]$ is the image of $p$. The map  $W_{1^3;1}\to \P(H^0(\sier{Q}(4,4))^*) $
is a branched covering onto its image.
But ramification occurs exactly when $L$ coincides with one
of the factors of $K$. By our assumption on isolatedness of the
newly introduced singularities, this does not occur at the point $[p]$.
We can therefore identify the germ with its image germ.
We get the following diagram:
\[
\xymatrix 
{
  (\wl\Lambda_1\times Q^*,p) \ar[r]\ar[d]& (\wl\Lambda_0\times Q^*,[p]) 
    \ar[r]\ar[d] & 0 \ar[d] \\
(V_{1^3;1},p) \ar[r] & (W_{1^3;1},[p]) \ar[r]\ar[d]  & 
0\times\prod_{\Sigma_2} \Def(\cy_{p}, x_{i}) \ar[d] \\
  & (\P(H^0(\sier{Q}(4,4))^*),[p])\ar[r] & 
 \prod_{\Sigma_1} \Def(\cy_{p}, x_{i})
\times\prod_{\Sigma_2}\Def(\cy_{p}, x_{i})\makebox[0pt][l]{$\;.$}
}\hphantom{\;}
\]
Note that we do not put $(V,p)=(\P(H^0(\sier{}(3))^*),p)$
in the lower left corner of the diagram, as $(V_{1^3;1},p)$ is not the
inverse image of $ 0\times\prod_{\Sigma_2} \Def(\cy_{p}, x_{i})$ in
$(V,p)$: the trivial deformations of type \eqref{trivdef} do
not respect the splitting of $\Phi$.

The two horizontal maps on the left are submersions. 
According to Lemma \ref{lem} and the remarks following it,
the stratum 
$\wl\Lambda_0\times Q^*$ is smooth at $[p]$, of codimension $\delta+7
=(\delta-5)+12$
in $(\P(H^0(\sier{Q}(4,4))^*),[p])$. As it is the inverse image of
$0 \in  \prod_{\Sigma_1} \Def(\cy_{p}, x_{i})
\times\prod_{\Sigma_2}\Def(\cy_{p}, x_{i})$, the map
$(\P(H^0(\sier{Q}(4,4))^*),[p])\lra
 \prod_{\Sigma_1} \Def(\cy_{p}, x_{i})
\times\prod_{\Sigma_2}\Def(\cy_{p}, x_{i})$ is a submersion.
The dimension of $\P(H^0(\sier{Q}(4,4))^*)$ is 24 and that of the
smooth space $W_{1^3;1}$ is 12, so the codimension of 
$(W_{1^3;1}[p])$ in $ (\P(H^0(\sier{Q}(4,4))^*),[p])$ is also 12.
We conclude that $(W_{1^3;1},[p])  \to  
0\times\prod_{\Sigma_2} \Def(\cy_{p}, x_{i})$ is a submersion
and therefore also $(V_{1^3;1},p)  \to  
0\times\prod_{\Sigma_2} \Def(\cy_{p}, x_{i})$. 

The proof in the general case is based on the same ideas.
To describe the subspace in which the image of 
$(V_{3;1},p)$, $(V_{2,1;1},p)$ or
$(V_{1^3;1},p)$ lands,
we need some more facts about deformations of plane curve
singularities.
For instance, in $V_{3;1}$, we change $K$ and $L$ separately.
This can also be done for components of reducible curve singularities.
We borrow the term \textit{equi-intersectional} for the resulting
stratum from \cite{GLS}; the theory can be modelled on the
careful treatment of the $\delta$-const stratum in \cite{GLS}.

We consider a curve singularity $C=\bigcup_{i=1}^k C_i$, where the
$C_i$ might themselves be reducible. We look at the deformation
theory of the map $\amalg C_i\to C$.

\begin{defn}
The equi-intersectional stratum $S^{\rm ei}$ of the curve
$C=\bigcup_{i=1}^k C_i$ is the image of the deformation space
of the map $\amalg C_i\to C$ in the deformation space of $C$.
\end{defn}

\begin{lemma}
The equi-intersectional stratum $S^{\rm ei}$ is smooth of codimension
$\sum_{i<j} (C_i\cdot C_j)$.
\end{lemma}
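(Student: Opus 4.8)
The plan is to follow the template for the $\delta$-constant stratum developed in \cite[Sect.~II.2.7]{GLS}, replacing the normalisation map by the partial normalisation $\nu\colon \amalg_{i=1}^k C_i\to C$ that separates the chosen components (but does \emph{not} normalise the individual $C_i$). First I would set up the deformation theory of this map. Writing $\sier C$ for the local ring of the singularity and $\wh{\sier C}:=\bigoplus_i\sier{C_i}$ for the semilocal ring of the partial normalisation, one has the conductor-type exact sequence
\[
0\lra \sier C \lra \wh{\sier C}\lra \bigoplus_p \wh{\sier C}_p/\sier{C,p}\lra 0\;,
\]
and the length of the skyscraper term is exactly $\sum_{i<j}(C_i\cdot C_j)$, the total intersection multiplicity among the $C_i$. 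This is the number that will turn out to be the codimension, so the first step is to record this identity (it is the local analogue of the genus formula $1+\delta+g=r+(n-1)^2$ used earlier in the paper).

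Next I would identify the tangent and obstruction spaces of the functor $\Def_{\amalg C_i\to C}$ with cohomology of the sheaf-theoretic normal complex of $\nu$, exactly as was done for $\Def_{\wt D/Q}$ in the proof of Proposition \ref{raakw}. Because the deformation of the map is allowed to deform the source $\amalg C_i$, the $C_i$ themselves, and the map, the controlling object is $T^\bullet_\nu$; the local-to-global argument shows that infinitesimal deformations sit in an exact sequence involving $H^0$ of the normal sheaf $\cn_\nu$ and obstructions in $H^1$. Since everything is local (a curve singularity, i.e.\ an affine/analytic germ), the relevant global cohomology in positive degree vanishes and the deformation functor of the map is \emph{unobstructed}; hence its image $S^{\rm ei}$ in $\Def_C$ is smooth. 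Then I would compute the codimension: the map of deformation functors $\Def_{\amalg C_i\to C}\to\Def_C$ is injective on tangent spaces (a deformation of $\nu$ is determined by the induced deformation of $C$, since $\nu$ is finite and birational onto $C$), and the cokernel on tangent spaces is precisely the length of $\bigoplus_p\wh{\sier C}_p/\sier{C,p}$, i.e.\ $\sum_{i<j}(C_i\cdot C_j)$. That the equi-intersectional deformations are exactly the ones lifting to deformations of the partial normalisation is the local counterpart of the statement, used repeatedly above, that $\delta$-constant deformations are those obtained by deforming the parametrisation.

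The main obstacle, I expect, is the codimension count — more precisely, showing that the differential of $\Def_{\amalg C_i\to C}\to\Def_C$ has cokernel of the asserted dimension, uniformly, and that no further conditions are imposed. One must check that deforming the $C_i$ and the map $\nu$ produces \emph{all} first-order deformations of $C$ that keep the components separate with the same pairwise intersection numbers, and that the map from the deformation space of the map to $S^{\rm ei}$ is not just dominant but an isomorphism of germs (so that $S^{\rm ei}$ really inherits smoothness). This is where one needs the careful bookkeeping of \cite{GLS}: the computation is parallel to the $\delta$-const case but now one tracks the contribution $\dim_\C\wh{\sier C}_p/\sier{C,p}=\sum_{i<j}(C_i\cdot C_j)_p$ at each point instead of the $\delta$-invariant, and one uses that this invariant is itself unchanged in the family. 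Once that is in place, smoothness and the codimension formula both drop out, completing the proof.
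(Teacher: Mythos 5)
There is a genuine gap in your smoothness argument. You claim that the deformation functor of the map $\nu\colon \amalg C_i\to C$ is unobstructed because ``everything is local'' and so ``the relevant global cohomology in positive degree vanishes''. That reasoning is only valid when the deformation problem is controlled by a single coherent sheaf (as for the immersion $\wt D\to Q$ of smooth spaces in Lemma \ref{lem}, where obstructions genuinely live in $H^1(\cn_\nu)$). For a map between \emph{singular} germs the obstruction space is the local $T^2$ of the map, which is not a positive-degree sheaf cohomology group and does not vanish for reasons of locality; indeed, for $\amalg C_i\to C$ it is in general nonzero. The paper's proof gets around this precisely by first showing that $\Def_{\amalg C_i\to C}$ is isomorphic to $\Def_{\amalg C_i\to \C^2}$ (the argument of \cite[Prop.~II.2.23, Prop.~II.2.9]{GLS}), i.e.\ by replacing the singular target $C$ with the smooth ambient plane, and only \emph{then} verifying $T^2=0$ by the computations of \cite[Sect.~II.2.4]{GLS}. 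Without this replacement your smoothness claim does not follow; with it, your argument would essentially become the paper's.

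Your treatment of the codimension is a genuinely different route from the paper's, and the auxiliary identity you start from is correct: the cokernel of $\sier{C}\to\bigoplus_i\sier{C_i}$ has length $\sum_{i<j}(C_i\cdot C_j)$ (it is $\delta(C)-\sum_i\delta(C_i)$). However, the key step --- that the cokernel of the tangent map $T^1_{\amalg C_i\to C}\to T^1_C$ has exactly this dimension --- is asserted rather than proved, and you yourself flag it as the main obstacle. The paper avoids this local computation entirely: having shown the forgetful map is an immersion (by the elementary observation that if the product deformation $F_1\cdots F_k$ of $f_1\cdots f_k$ is trivial then so are the deformations of the factors and of the map), it invokes openness of versality to move to a general point of the stratum where the deformed $C_i$ meet transversally, and there the codimension is visibly the number of intersection points, $\sum_{i<j}(C_i\cdot C_j)$. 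If you want to keep your infinitesimal computation you would need to carry out the bookkeeping you defer to \cite{GLS}; otherwise the general-position argument is the shorter path.
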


\begin{proof}
One first shows that the deformation functors of the map
$\amalg C_i\to C$ and that of $\amalg C_i\to \C^2$ are 
isomorphic, with the reasoning of the proof \cite[Prop.~II.2.23]{GLS};
it uses \cite[Prop.~II.2.9]{GLS}, which is formulated in great generality
and also applies to our situation. The vector space $T^2$
for the second deformation problem vanishes (this is not true for
the first problem) by computations similar to those in 
\cite[Sect.~II.2.4]{GLS}. Therefore, the deformation space of
$\amalg C_i\to C$ is smooth.

Let $C=\bigcup_{i=1}^k C_i$ be given by $f_1\cdots f_k=0$.
The image in $\C^2$  of a deformation of $\amalg C_i\to C$
is given by a product $F_1\cdots F_k=0$. If this is a
trivial deformation of $C$, then necessarily, the 
$C_i$ are deformed trivially and also the map. Therefore, 
the natural forgetful map from the deformation space of
$\amalg C_i\to C$
to the deformation space of $C$
is an immersion. 

To compute the codimension, it suffices  by openness of
versality
to look at a general point of the stratum, for which the
deformed curves $C_i$ intersect transversally. The number of
intersection points is $\sum_{i<j} (C_i\cdot C_j)$.
\end{proof}

\begin{proof}[Proof of Lemma  \ref{lem:smooth}]
As the question is local, it suffices to prove that 
$\wl \Pi$ is smooth at the points of $\wl\Lambda$. 
We fix a  point $p\in\wl\Lambda\subset\wl\Pi$.
We prove that $\wl \Pi$ is smooth at $p$ by realising it
as the inverse image of the $\delta$-const stratum $S^\delta$
under a submersion. Let $S^{\rm ei}$ be the product of the
equi-intersectional strata of the isolated singularities
$(\cy_{p}, x_{i})$.

A complication is that in general the map from $(\cv,p)
\to \prod\Def(\cy_{p}, x_{i})$ is not surjective.
Nevertheless, by Lemma \ref{lem} the stratum $\wl\Lambda_0
\times Q^*$ is smooth of codimension $\delta+7$ 
in $\P(H^0(\sier{Q}(4,4))^*)$. We therefore map down to a
smooth space with tangent space isomorphic to 
$\prod T^1_{(\cy_{p}, x_{i})} / TS^\delta$.
As the strata $S^\delta \subset S^{\rm ei}$ are smooth,
we can choose a transversal slice $S^\perp$ and a 
projection $\sigma \colon \prod\Def(\cy_{p}, x_{i})\to S^\perp$
with $S^\delta = \sigma^{-1}(0)$ and 
$\sigma^{-1}(\sigma(S^{\rm ei}))=S^{\rm ei}$.

We  treat the three cases, that $\wl \Pi$ lies entirely in 
$\cv_{3;1}$, $\cv_{2,1;1}$ or $\cv_{1^3;1}$, 
at the same time by writing $\cv_{\alpha;1}$ with
$\alpha$ standing for a partition. The stratum $S^{\rm ei}$
depends on the chosen partition.

Smoothness
of $\wl \Pi$ at $p$ follows when we show  that 
$\sigma\circ\kappa_{\alpha;1}\colon (\cv_{\alpha;1}, p)
\to \sigma(S^{\rm ei}) $ is 
a submersion. In fact, this already holds for the restriction
of $\sigma\circ\kappa_{\alpha;1}$ to the zero section $(V_{\alpha;1},p)$
of $(\cv_{\alpha;1},p)$. 
We factor this map through $(W_{\alpha;1},[p])\subset 
(\P(H^0(\sier{Q}(3,3))^*)\times\P(H^0(\sier{Q}(1,1))^*),[p])$,
where $W_{\alpha;1}$ is defined analogously to $V_{\alpha;1}$, and
$[p]$ is the image of $p$. 
In the two cases $\alpha=(2,1)$ and $\alpha=(1^3)$,
the map  $W_{\alpha;1}\to \P(H^0(\sier{Q}(4,4))^*) $ is 
a branched covering onto its image.
But ramification does not  occur at the point $[p]$.
We can therefore identify the germ with its image germ.

In this way, we get immersions 
$(\wl\Lambda_0\times Q^*,[p]) \subset
(W_{\alpha;1},[p]) \subset (\P(H^0(\sier{Q}(4,4))^*),[p])$. 
According to Lemma \ref{lem} and the remarks following it,
the stratum 
$\wl\Lambda_0\times Q^*$ is smooth at $[p]$, of codimension $\delta+7$
in $(\P(H^0(\sier{Q}(4,4))^*),[p])$. 
As this stratum is the inverse image of
$0 \in S^\perp$, the map
$(\P(H^0(\sier{Q}(4,4))^*),[p])\lra (S^\perp,0)$ is a submersion.
The dimension of $\P(H^0(\sier{Q}(4,4))^*)$ is 24.
The dimensions of the
smooth spaces $W_{3;1}$, $W_{2,1;1}$ and  $W_{1^3;1}$ 
are 18, 14 and 12, respectively, so the codimensions of the
$(W_{\alpha;1}[p])$ in $ (\P(H^0(\sier{Q}(4,4))^*),[p])$ are
6, 10 and 12, respectively. These numbers are the respective global
intersection numbers of the components of $KL$, so they coincide with the
codimensions of $S^{\rm ei}$ in $\prod\Def(\cy_{p}, x_{i})$ and
therefore also of $\sigma(S^{\rm ei})$ in $S^\perp$.  
We conclude that $(W_{\alpha;1},[p])  \to  
\sigma(S^{\rm ei})$ is a submersion
and therefore also $(V_{\alpha;1},p)  \to  
\sigma(S^{\rm ei})$.
\end{proof}

\begin{example}\label{ex:base}
We can give a more explicit description of $\wl\Pi$, when this stratum is 
given by only one equation in $\cv_{\alpha;1}$.
This is the case if $K$ lies in the open stratum  inside the
space of forms of given splitting type. Our aim is to describe $L$
depending on $K$ and $\al1\al2$.  We use homogeneous coordinates
on $\P(H^0(\sier{}(3))^*)\times\P(H^0(\sier{}(1))^*)$. On the total space, we
have the
$\left(\C^{\ast}\right)^{4}$-action given by
\begin{multline*}\qquad
(a,b,\lambda,\mu)\cdot(x_{0},x_{1},x_{2},x_{3}\semic y_{0},y_{1},y_{2}\semic
\alpha_{1},\alpha_{2}\semic K, L) 
\\{}=
(ax_{0},ax_{1},ax_{2},ax_{3}
   \semic by_{0},\lambda ba^{2}y_{1},\mu ba^{2}y_{2}\semic
\mu^{-1}\alpha_{1},\lambda^{-1}\alpha_{2}\semic \lambda K, \mu L)\;.
\qquad
\end{multline*} 
In particular, the equation $Q^2-4\al1\al2LK$ is invariant under the
$(\la,\mu)$-action. We compute the condition that this quartic has a singular
point $P$ outside the quadric $Q$. We find the condition
\[
2Q(P)\,\D Q(P) - 4\al1\al2L(P)\,\D K(P) -4K(P)\,\D (\al1\al2L) = 0\;,
\]
so
\[
\D (\al1\al2L) = \frac{Q(P)}{2K(P)}\D Q(P) -\frac{Q^2(P)}{4K^2(P)}\D K(P)\;.
\]
Note that the right-hand side of this equation is homogeneous of degree 0
in $P$. This shows that $\al1\al2L$ is determined by the position 
of the point $P$. As the formula does not work if $P$ lies on $\{K=0\}$,
we blow up $\P^3$ in the intersection of $Q$ and $K$ and look at a
neighbourhood of the strict transform of the quadric $Q$. 
We realise the blow-up as subset of a $\P^1$-bundle over $\P^3$.
The singular points  lie above the singular points of $Q\cap K$, but away
from the strict transform  $\wt Q$ 
of the quadric. Note that the normal bundle
of $\wt Q$ is of type $(-1,-1)$, as we have blown up a curve of type
$(3,3)$ on $Q$. It is natural to take the product $\al1\al2$ 
as inhomogeneous fibre coordinate, so that the blow-up is given by
\begin{equation}\label{eq:point}
Q -2\al1\al2K=0\;.
\end{equation}
But we rather consider this equation in a rank $2$ vector bundle
over $\P^3$, described by the action
\[
\mu\cdot(x_{i}\semic \alpha_{1},\alpha_{2}) 
=(\mu x_{i} \semic \mu^{-1}\alpha_{1},\alpha_{2})\;.
\]
Now we have all the ingredients to describe $L$ in Equations
\eqref{ydef}. As a linear form is determined by its differential,
we can define $L$ by
\[
\D L = \D Q(P) -\al1\al2\D K(P)\;.
\]
We consider the coefficients of $K$, $\al1$, $\al2$ and the
point $P$ (related by Equation \eqref{eq:point}) as 
coordinates.
In this way, we obtain the normalisation of the stratum $\wl\Pi$
(or rather of a closure).
We have an isomorphism in a neighbourhood of $\al1=\al2=0$, but, in general, 
several singular points can lie on one and the same quartic branch
surface.
We will see this phenomenon again in Example \ref{nonumber}.
\end{example}

\subsection{The family $\wt\cz \to\Pi$}
Having constructed the family $\cy\to\Pi$, we use birational transformations
of the total space to obtain a family, which is
a deformation of \KLB manifolds. We also give
divisors $\wt\cs_{1}$, $\wt\cs_{2}$ in $\wt\cz$.

To see what has to be done, we first describe how the rational map $W
\dashrightarrow Y$ of Lemma \ref{lem:birational} can be factored.
To be consistent with later notation, we call $W$ for $Y^+$  and factor the
rational map $Y^+ \dashrightarrow Y$ as composition 
$Y^+ \longleftarrow Y^- \lra Y$ of a blow-up and a small contraction. In the
process, all objects on $W$ (as defined in previous sections)
pick up a plus sign as upper index. 
On $W=Y^+$, we therefore have the divisors 
\[
E_i^+:\quad w_0=w_i=0
\qquad\text{ and }\qquad 
R_i^+:\quad   \psi_i=0\;.
\]
The rational  map is not defined on the following two disjoint curves:
\begin{eqnarray*}
    B^+_{1}=R^+_1\cap E^+_1&:&\quad \psi_{1}=0\;,\;\; w_{0}=w_{1}=0\;,\\
    B^+_{2}=R^+_2\cap E^+_2&:&\quad \psi_{2}=0\;,\;\; w_{0}=w_{2}=0\;.
\end{eqnarray*}
To describe the blow-up of $B^+_1$, we first describe its blow-up in the
ambient $\P^2$-bundle over $Q$. Inside the fibred product
\[
\P(\sier Q \oplus \sier Q(-2,-1) \oplus \sier Q(-1,-2))
  \times_Q
  \P(\sier Q \oplus \sier Q(-2,-1) \oplus \sier Q(-1,0))
\]
with  fibre coordinates $(w_{0}:w_{1}:w_{2})$ and
$(z_{0}:z_{1}:z_{2})$ this blow-up  can be described by the equations
\[
\Rank \begin{pmatrix}w_0& w_1 & \psi_1 \\ z_0& z_1 &z_2\end{pmatrix}  \leq 1\;.
\]
The strict transform $Y^-$  of $Y^+$ is given by the five maximal
minors of the matrix
\[
\begin{pmatrix}w_0& w_1 & \psi_1 \cr z_0& z_1 &z_2 \\ 
w_2 & \vp w_0\end{pmatrix} \;.
\]
The exceptional divisor is a ruled surface $F^-_1$, given by
$\psi_{1}=w_{0}=w_{1}=z_1=0$. We can extend our rational map by
\[
(y_{0}:y_{1}:y_{2}) = (z_{0}:\psi_{2}z_{1}:z_{2}w_2)\;.
\]
It is everywhere defined in a neighbourhood of $F^-_1$. The blow-up
of $B_2^+$ introduces a ruled surface $F^-_2 \subset Y^{-}$.

The morphism $Y^- \to Y$ blows down several curves, which lie in the
strict transforms $R_1^-$ and  $R_2^-$ of $R_1^+$ and  $R_2^+$. This
fact is most easily seen  from the given formula for the rational
map $Y^+ \dashrightarrow Y$. On $Y^+$, these curves form the fibre
over the point $P$ in which $L$ is the tangent plane to $Q$, given
by $R^+_1\cap R^+_2\colon \psi_1=\psi_2=0$, and the lines
$\psi_1=w_1=\vp=0$, $\psi_2=w_2=\vp=0$. If $P$ is in general
position, exactly seven lines are blown down, introducing seven new 
$A_1$ singularities. Otherwise the given equations define the blown
down curves  with possibly non-reduced structure and we get less but higher
singularities. Mostly this will not influence our constructions, but
it is important to distinguish whether or not the point $P$ lies on
the discriminant curve $\vp=0$.

Figure \ref{figeen} shows, in the general case, the divisors involved
and the exceptional curves.
\begin{figure}\centering
  \begin{tikzpicture}
    \newcommand{\thick}{1.0pt}
    \newcommand{\Mitte}{1.65}
    \newcommand{\thin}{0.4pt}
    \newsavebox{\Yplus}
    \savebox{\Yplus}{
      \tkzDefPoint(2.5,2.3){X}
      \tkzDefPoint(5.0,2.9){A}
      \tkzDefPoint(0,2.9){B}
      \tkzDefPoint(2.5,-4.3125){O1}
      \tkzDrawSegments[line width = \thin](X,A X,B)
      \tkzDrawArc(O1,A)(B)
      \tkzDefPoint(2.5,1.0){Y}  
      \tkzDefPoint(0,0.4){C}
      \tkzDefPoint(5.0,0.4){D}
      \tkzDefPoint(2.5,7.6125){O2}
      \tkzDrawSegments[line width = \thin](Y,C Y,D)
      \tkzDrawArc(O2,C)(D)
      \tkzDrawSegments[line width = \thin](A,D B,C)
      \draw [line width = \thick] (X) -- (Y);
      \tkzDefPoints{%
        1.0/\Mitte/E11,
        1.5/\Mitte/E12,
        2.0/\Mitte/E13,
        3.0/\Mitte/E21,
        3.5/\Mitte/E22,
        4.0/\Mitte/E23}
      \tkzDefMidPoint(B,X)  \tkzGetPoint{F12}
      \tkzDefMidPoint(B,F12)\tkzGetPoint{F11}
      \tkzDefMidPoint(F12,X)\tkzGetPoint{F13}
      \tkzDefMidPoint(Y,D)  \tkzGetPoint{F22}
      \tkzDefMidPoint(Y,F22)\tkzGetPoint{F21}
      \tkzDefMidPoint(F22,D)\tkzGetPoint{F23}
      \draw [line width = \thick] (E11) -- (F11) (E12) -- (F12) (E13) -- (F13);
      \draw [line width = \thick] (E21) -- (F21) (E22) -- (F22) (E23) -- (F23);
      \node at (2.5,2.8) {$E_{1}^{+}$};
      \node at (2.5,0.5) {$E_{2}^{+}$};
      \node at (1.0,1.2) {$R_{1}^{+}$};
      \node at (4.0,2.15){$R_{2}^{+}$};
    }
    \newsavebox{\Y}
    \savebox{\Y}{
      \tkzDefPoint(2.5,2.3){X}
      \tkzDefPoint(5.0,2.9){A}
      \tkzDefPoint(0,2.9){B}
      \tkzDefPoint(2.5,-4.3125){O1}
      \tkzDrawSegments[line width = \thin](X,A X,B)
      \tkzDrawArc(O1,A)(B)
      \tkzDefPoint(2.5,1.0){Y}  
      \tkzDefPoint(0,0.4){C}
      \tkzDefPoint(5.0,0.4){D}
      \tkzDefPoint(2.5,7.6125){O2}
      \tkzDrawSegments[line width = \thin](Y,C Y,D)
      \tkzDrawArc(O2,C)(D)
      \tkzDefPoints{%
      0.3/\Mitte/M,
      4.7/\Mitte/N}
      \tkzDrawSegments[line width = \thin](M,B M,C N,A N,D M,N X,Y)
      \tkzDefPoints{%
        2.5/\Mitte/E0,
        1.0/\Mitte/E11,
        1.5/\Mitte/E12,
        2.0/\Mitte/E13,
        3.0/\Mitte/E21,
        3.5/\Mitte/E22,
        4.0/\Mitte/E23}
      \tkzDrawPoints[size=4pt](E0,E11,E12,E13,E21,E22,E23)
      \node at (2.5,2.8) {$E_{1}$};
      \node at (2.5,0.5) {$E_{2}$};
      \node at (1.0,1.2) {$R_{1}$};
      \node at (4.0,2.15){$R_{2}$};
      \node at (1.0,2.15){$F_{1}$};
      \node at (4.0,1.2) {$F_{2}$};
    }
    \newsavebox{\Yminus}
    \savebox{\Yminus}{
      \tkzDefPoint(2.5,2.9){X}
      \tkzDefPoint(5.0,3.5){A}
      \tkzDefPoint(0,3.5){B}
      \tkzDefPoint(2.5,-3.7125){O1}
      \tkzDrawSegments[line width = \thin](X,A X,B)
      \tkzDrawArc(O1,A)(B)
      \tkzDefPoint(2.5,1.0){Y}  
      \tkzDefPoint(0,0.4){C}
      \tkzDefPoint(5.0,0.4){D}
      \tkzDefPoint(2.5,7.6125){O2}
      \tkzDrawSegments[line width = \thin](Y,C Y,D)
      \tkzDrawArc(O2,C)(D)
      \tkzDefPoints{%
      0.3/2.3/M,
      4.7/1.6/N,
      2.35/2.3/P,
      2.65/1.6/Q}
      \tkzDrawSegments[line width = \thin](M,B M,C N,A N,D M,P N,Q X,P Y,Q)
      \draw [line width = \thick] (P) -- (Q);
      \tkzDefPoints{%
        0.55/2.3/M1,
        4.45/1.6/N1,
        4.45/2.7/A1,
        0.55/1.2/C1}
      \tkzDefMidPoint(M1,P)  \tkzGetPoint{E11}
      \tkzDefMidPoint(M1,E11)\tkzGetPoint{E12}
      \tkzDefMidPoint(E11,P) \tkzGetPoint{E13}
      \tkzDefMidPoint(Q,N1)  \tkzGetPoint{E21}
      \tkzDefMidPoint(Q,E21) \tkzGetPoint{E22}
      \tkzDefMidPoint(E21,N1)\tkzGetPoint{E23}
      \tkzDefMidPoint(P,A1)  \tkzGetPoint{F21}
      \tkzDefMidPoint(P,F21) \tkzGetPoint{F22}
      \tkzDefMidPoint(F21,A1)\tkzGetPoint{F23}
      \tkzDefMidPoint(C1,Q)  \tkzGetPoint{F11}
      \tkzDefMidPoint(C1,F11)\tkzGetPoint{F12}
      \tkzDefMidPoint(F11,Q) \tkzGetPoint{F13}
      \draw [line width = \thick] (E11) -- (F11) (E12) -- (F12) (E13) -- (F13);
      \draw [line width = \thick] (E21) -- (F21) (E22) -- (F22) (E23) -- (F23);
      \node at (2.5,3.35) {$E_{1}^{-}$};
      \node at (2.5,0.5) {$E_{2}^{-}$};
      \node at (0.7,1.0) {$R_{1}^{-}$};
      \node at (4.3,2.9) {$R_{2}^{-}$};
      \node at (0.7,2.9) {$F_{1}^{-}$};
      \node at (4.3,1.0) {$F_{2}^{-}$};
    }
    \node at (0,0)     {\usebox{\Yplus}};
    \node at (7.0,0)   {\usebox{\Y}};
    \node at (3.5,5.1) {\usebox{\Yminus}};
    \node (T)  at (6.0,4.7) {$Y^{-}$};
    \node (BL) at (4.7,3.5) {$Y^{+}$};
    \node (BR) at (7.2,3.5) {$Y$};
    \draw[->,line width = 0.6pt] (T) -- (BL);
    \draw[->,line width = 0.6pt] (T) -- (BR);
    \pgfresetboundingbox
    \path[use as bounding box] (0,0) rectangle (12,9);
  \end{tikzpicture}
\caption{Factorisation of the map $Y^+ \dashrightarrow Y$.}\label{figeen}
\end{figure}
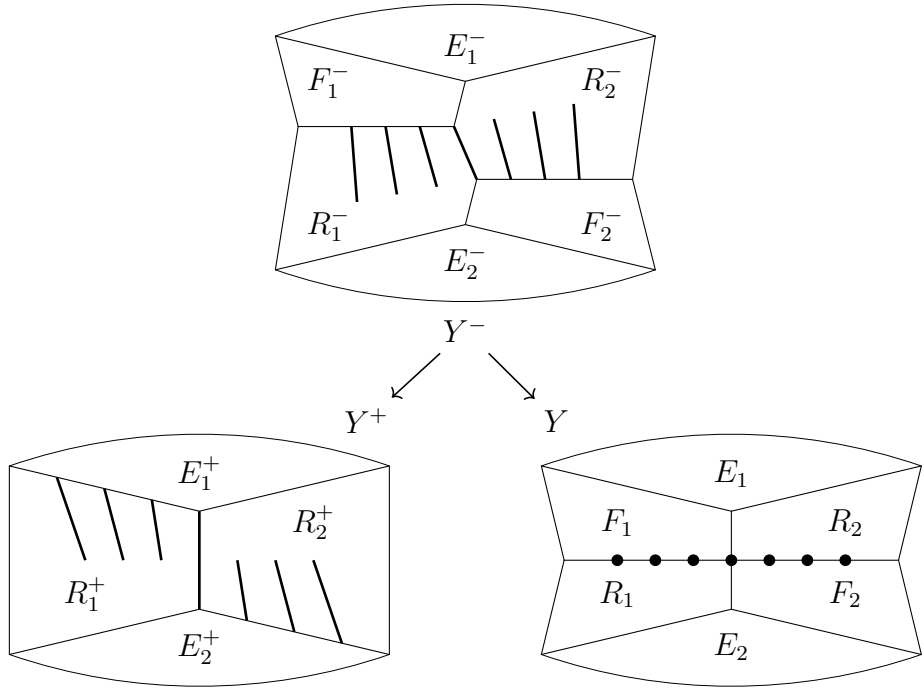
In the case where the point $P$ lies on the discriminant, both curves in
the fibre are contracted to a $cA_2$ singularity. We also give a
general picture for this situation, see Figure \ref{figtwee}.
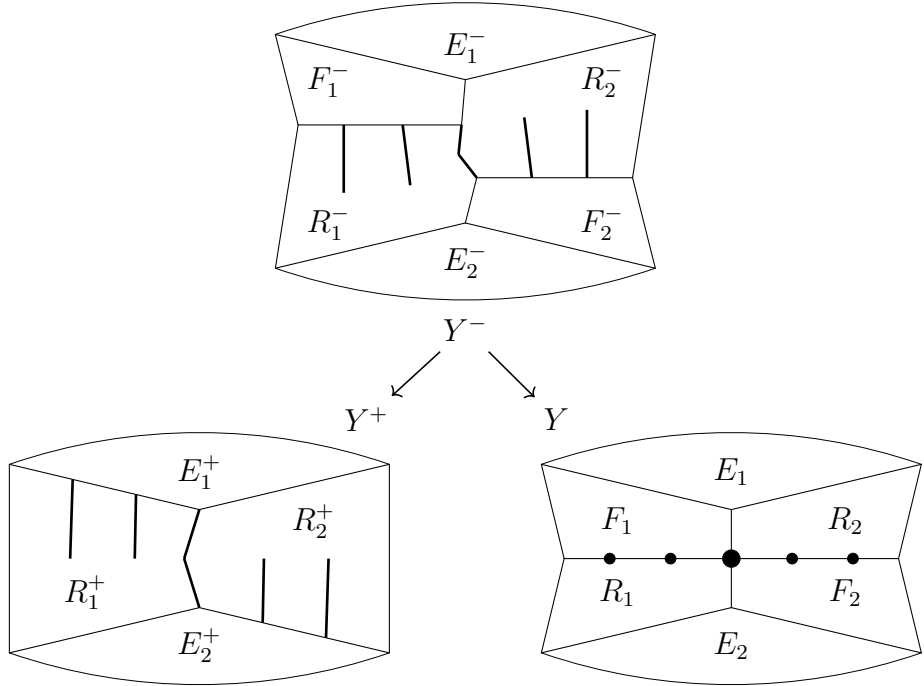
\begin{figure}\centering
    \begin{tikzpicture}
    \newcommand{\thick}{1.0pt}
    \newcommand{\Mitte}{1.65}
    \newcommand{\thin}{0.4pt}
    \newsavebox{\YplusD}
    \savebox{\YplusD}{
      \tkzDefPoint(2.5,2.3){X}
      \tkzDefPoint(5.0,2.9){A}
      \tkzDefPoint(0,2.9){B}
      \tkzDefPoint(2.5,-4.3125){O1}
      \tkzDrawSegments[line width = \thin](X,A X,B)
      \tkzDrawArc(O1,A)(B)
      \tkzDefPoint(2.5,1.0){Y}  
      \tkzDefPoint(0,0.4){C}
      \tkzDefPoint(5.0,0.4){D}
      \tkzDefPoint(2.5,7.6125){O2}
      \tkzDrawSegments[line width = \thin](Y,C Y,D)
      \tkzDrawArc(O2,C)(D)
      \tkzDrawSegments[line width = \thin](A,D B,C)
      \tkzDefPoints{%
        2.3/\Mitte/E0,
        0.8/\Mitte/E11,
        1.65/\Mitte/E12,
        3.35/\Mitte/E21,
        4.2/\Mitte/E22}
      \draw [line width = \thick] (X) -- (E0) (E0) -- (Y);
      \tkzDefPoint(0.83333333,2.7){F11}
      \tkzDefMidPoint(X,F11)\tkzGetPoint{F12}
      \tkzDefPoint(4.16666666,0.6){F22}
      \tkzDefMidPoint(Y,F22)\tkzGetPoint{F21}
      \draw [line width = \thick] (E11) -- (F11) (E12) -- (F12);
      \draw [line width = \thick] (E21) -- (F21) (E22) -- (F22);
      \node at (2.5,2.8) {$E_{1}^{+}$};
      \node at (2.5,0.5) {$E_{2}^{+}$};
      \node at (1.0,1.2) {$R_{1}^{+}$};
      \node at (4.0,2.15){$R_{2}^{+}$};
    }
    \newsavebox{\YD}
    \savebox{\YD}{
      \tkzDefPoint(2.5,2.3){X}
      \tkzDefPoint(5.0,2.9){A}
      \tkzDefPoint(0,2.9){B}
      \tkzDefPoint(2.5,-4.3125){O1}
      \tkzDrawSegments[line width = \thin](X,A X,B)
      \tkzDrawArc(O1,A)(B)
      \tkzDefPoint(2.5,1.0){Y}  
      \tkzDefPoint(0,0.4){C}
      \tkzDefPoint(5.0,0.4){D}
      \tkzDefPoint(2.5,7.6125){O2}
      \tkzDrawSegments[line width = \thin](Y,C Y,D)
      \tkzDrawArc(O2,C)(D)
      \tkzDefPoints{%
      0.3/\Mitte/M,
      4.7/\Mitte/N}
      \tkzDrawSegments[line width = \thin](M,B M,C N,A N,D M,N X,Y)
      \tkzDefPoints{%
        2.5/\Mitte/E0,
        0.9/\Mitte/E11,
        1.7/\Mitte/E12,
        3.3/\Mitte/E21,
        4.1/\Mitte/E22}
      \tkzDrawPoints[size=4pt](E11,E12,E21,E22)
      \tkzDrawPoints[size=6.5pt](E0)
      \node at (2.5,2.8) {$E_{1}$};
      \node at (2.5,0.5) {$E_{2}$};
      \node at (1.0,1.2) {$R_{1}$};
      \node at (4.0,2.15){$R_{2}$};
      \node at (1.0,2.15){$F_{1}$};
      \node at (4.0,1.2) {$F_{2}$};
    }
    \newsavebox{\YminusD}
    \savebox{\YminusD}{
      \tkzDefPoint(2.5,2.9){X}
      \tkzDefPoint(5.0,3.5){A}
      \tkzDefPoint(0,3.5){B}
      \tkzDefPoint(2.5,-3.7125){O1}
      \tkzDrawSegments[line width = \thin](X,A X,B)
      \tkzDrawArc(O1,A)(B)
      \tkzDefPoint(2.5,1.0){Y}  
      \tkzDefPoint(0,0.4){C}
      \tkzDefPoint(5.0,0.4){D}
      \tkzDefPoint(2.5,7.6125){O2}
      \tkzDrawSegments[line width = \thin](Y,C Y,D)
      \tkzDrawArc(O2,C)(D)
      \tkzDefPoints{%
      0.3/2.3/M,
      4.7/1.6/N,
      2.45/2.3/P,
      2.65/1.6/Q}
      \tkzDrawSegments[line width = \thin](M,B M,C N,A N,D M,P N,Q X,P Y,Q)
      \tkzDefPoints{%
        2.41/1.91/E0,
        0.9/2.3/E11,
        4.1/1.6/E22,
        0.9/1.4/F11,
        4.1/2.5/F22}
      \tkzDefMidPoint(P,E11)  \tkzGetPoint{E12}
      \tkzDefMidPoint(Q,E22)  \tkzGetPoint{E21}
      \tkzDefMidPoint(P,F22)  \tkzGetPoint{F21}
      \tkzDefMidPoint(Q,F11)  \tkzGetPoint{F12}
      \draw [line width = \thick] (P) -- (E0) (E0) -- (Q);
      \draw [line width = \thick] (E11) -- (F11) (E12) -- (F12);
      \draw [line width = \thick] (E21) -- (F21) (E22) -- (F22);
      \node at (2.5,3.35) {$E_{1}^{-}$};
      \node at (2.5,0.5) {$E_{2}^{-}$};
      \node at (0.7,1.0) {$R_{1}^{-}$};
      \node at (4.3,2.9) {$R_{2}^{-}$};
      \node at (0.7,2.9) {$F_{1}^{-}$};
      \node at (4.3,1.0) {$F_{2}^{-}$};
    }
    \node at (0,0)     {\usebox{\YplusD}};
    \node at (7.0,0)   {\usebox{\YD}};
    \node at (3.5,5.1) {\usebox{\YminusD}};
    \node (T)  at (6.0,4.7) {$Y^{-}$};
    \node (BL) at (4.7,3.5) {$Y^{+}$};
    \node (BR) at (7.2,3.5) {$Y$};
    \draw[->,line width = 0.6pt] (T) -- (BL);
    \draw[->,line width = 0.6pt] (T) -- (BR);
    \pgfresetboundingbox
    \path[use as bounding box] (0,0) rectangle (12,9);
  \end{tikzpicture}
\caption{Factorisation in case $P$ lies on the discriminant curve.}
\label{figtwee}
\end{figure}

\smallskip
We now consider  the family $\cy\to\Pi$. 
In the base space $\Pi$, we have two
divisors $\Delta_i$, given by $\al i=0$ ($i=1,2$). Over these, the
fibres of the family   are reducible. In fact, the section
$\ce_i\colon y_0=y_i=0$ of the $\P^2$-bundle over $\Delta_i$ is a
component. The remaining component of $\cy|_{\Delta_{i}}$ will be
denoted by $Y_{i}$. The intersection $E_{i}$ of $\ce_i$ and $Y_i$
lies over $Q$ and is given by $\al i= y_0=y_i=Q=0$. Over
$\Lambda=\Delta_1\cap \Delta_2$ there are three components,
$\ce_1|_\Lambda$, $\ce_2|_\Lambda$ and $Y_\Lambda$, which is the
intersection of $Y_1$ and $Y_2$. The space $Y_\Lambda$ over
$\Lambda$ is given by Equations (\ref{yyy}).

\begin{remark}
It is crucial for the construction of the birational transformations to
observe that the plane $L$ is tangent to $Q$ for all parameters in
$\Delta_{1}\cup\Delta_{2}$. This follows from Proposition \ref{prop:sing}. 
  It is interesting to note that away from $\Delta_{1}\cup\Delta_{2}$, that is, 
  if $\alpha_{1}\alpha_{2}\ne0$, where $L$ in general is not tangent
  to $Q$, even if $L$ happens to be tangent to $Q$, over a point
  of tangency, there is no singularity of the fibre of $\cy$, provided that 
  $L$ was not tangent to $Q$ at a point on $K$. 
\end{remark}

Some of the divisors we encountered in describing $Y$ (see Figure
\ref{figeen}) also extend over  $\Delta_1$ and others
over $\Delta_2$. If over $\Delta_1$, where $\al1=0$,  also
$Q=L=0$, then  the lines $\{y_1=0\}$ in the fibres of the
$\P^2$-bundle lie on $Y_1$. As $Q=L=0$ consists of two lines
$l_1=\pr1^{-1}(a)$ and $l_2=\pr2^{-1}(b)$, one from each ruling, we
have two ruled surfaces in each fibre of our family over $\Delta_1$.
As the ruled surfaces are given by $y_1=0$ they are the restriction
of the bundle $\P(\co\oplus \co(-2)) \rightarrow \P^{3}$ to a line,
so each is a Hirzebruch surface $\mathbb{F}_{2}$. We set
$F_1=\pi^{-1}(l_1)\cap \{y_1=0\}$ and $R_2=\pi^{-1}(l_2)\cap
\{y_1=0\}$, consistent with our earlier notation. We consider $F_1$
as a family of ruled surfaces over $\Delta_1$. Correspondingly, we have
$F_2$ and $R_1$ over $\Delta_2$.
Note that $F_{1}\cap F_{2} = \pi^{-1}(l_{1}\cap l_{2}) \cap \{y_{1}=y_{2}=0\}$ 
is a single point in each fibre over $\Lambda$. This point is a singularity of
the corresponding fibre of $\cy$.

The deformation of the divisors $\wt S_1$ and  $\wt S_2$ will
be constructed from the following divisors in $\cy$:
\[
\begin{array}{rcl}
  \cR_1 &=& \{ L=0,\; y_2=0, \; \al2 y_1 - y_0 Q =0\}\;,\\
  \cR_2 &=& \{ L=0,\; y_1=0, \; \al1 y_2 - y_0 Q =0\}\;.
\end{array}
\]
Over each point of $\Pi\setminus\Delta_2$, we have $\al2\ne 0$, thus
$\cR_1$ is isomorphic to the plane defined by $L$ in $\P^3$. 
Similarly, over each point of $\Pi\setminus\Delta_1$, 
$\cR_2$ is isomorphic to the same plane. Over $\Delta_2$, however, 
$\cR_1$ splits into three components: $F_2$, $R_1$ and $\{L=0, y_0=y_2=0\}$, a
family of planes in $\ce_2$. 
Similarly, over $\Delta_1$, $\cR_2$ splits into $F_1, R_2$ and $\{L=0,
y_0=y_1=0\}$. Only the components $R_i$ survive our construction, the other
components will be contracted, so that we shall eventually have a family of
irreducible divisors.

The construction of the deformation $\wt\cz\to\Pi$ proceeds
by constructing  the following maps, which are maps of families over
the base space $\Pi$:
\[
\begin{xymatrix}@=1pc
  {&&&\wt\cy\ar[dl]\ar[dr]&&&&&\\
  \cy&&\cy^{-}\ar[ll]\ar@{-->}[rr]&&\cy^{+}\ar[rr]&&\cz&&\wt\cz\makebox[0pt][l]{$\;.$}\ar[ll]}
\end{xymatrix}
\]

Let us give a brief overview over these constructions before we
enter a detailed description.

\begin{enumerate}
\item The morphism $\cy^-\to \cy$ is the
  simultaneous small partial resolution of the extra singularities in each
  fibre, introduced by the plane $L$.
\item  The rational map $\cy^-\dashrightarrow\cy^+$ is the
 simultaneous flop of all lines on the transforms  $F_1^-$ and $F_2^-$ of $F_1$
 and $F_2$. We construct the flop explicitly as blow-up followed by blowing
 down in the other direction.
\item The morphism $\cy^+\to\cz$  contracts the strict transforms
 $\ce_i^+$ of the components $\ce_i$.
\item The final step $\wt\cz \rightarrow \cz$ is
  the simultaneous small resolution of the remaining singularities in the
  fibres.
\end{enumerate}

In the second and third steps  the family is modified only over
$\Delta_{1}\cup\Delta_{2}$. For $p\not\in\Delta_{1}\cup\Delta_{2}$,
the fibres of $\wt{\mathcal{Z}}$ are small resolutions of the fibres
of $\mathcal{Y}$.

\subsection{$\cy^-\lra\cy$}\label{subsec:31}
By construction of $\Pi$, a small simultaneous resolution of
all isolated singular points exists. In the first step, we only want 
to resolve the new singularities, introduced by the plane $L$;
we take care of the remaining singularities in the final step.
The reason is that we want to stay as close as possible to the spaces
defined by our equations. In particular, after the second step, 
we want the space $Y^+=W$ as component of a fibre, and not a small
resolution of it. This also means that the construction works over
the base space $\wl \Pi$: the covering $\Pi\to\wl\Pi$ is only needed
for the final step.
Therefore, we consider only a partial resolution.

Over $\Lambda$, the partial small resolution is the partial
resolution  $Y^-$, described above in factoring the rational map
$W=Y^+\dashrightarrow Y$ (cf.~Figures \ref{figeen} and \ref{figtwee}).
The divisors $\cR_i \subset \cy$ will be blown up in each
fibre to become divisors $\cR_i^- \subset \cy^-$.

\begin{remark} \label{rmk-explicit}
We give a more explicit description of the partial
resolution, although this is not necessary for the proof. 
We first locate the singularities. Then, we use that, over $\Lambda$, which 
small resolution of the extra singularities has to be chosen is determined by 
$W\dashrightarrow Y_{\Lambda}$.
The condition for a singular point in a fibre of $\cy \to \Pi$ is
given by \eqref{sing}.
Above $\Lambda$, that is, for $\al1=\al2=0$, the isolated singular
points lie  in $y_1=y_2=0$ on $Q=KL=0$ and are given by the
condition that the differential $\D(KL)$ is proportional to $\D Q$.

On the  intersection of $K=0$ and $L=0$, one has
$\D(KL)=0$, so also for $\al1$, $\al2\neq0$, there are isolated 
singular points at $y_1=y_2=0$ on $Q=KL=0$. In general, this gives six 
of the seven  ordinary double points introduced by $L$.
Over $\Lambda$, in general, the seventh singularity is the point of
intersection of $F_{1}$ and $F_{2}$.

The first equation $y_1y_2-KLy_0^2=0$ of \eqref{ydef}
leads to two globally defined partial small resolutions.
Namely, we can consider the closure $\wh \cy_1$ of the graph of the
rational map $\cy \dashrightarrow  \P(\co\oplus \co(-1))$ given by
$(z_0:z_1)=(y_1:Ky_0)=(Ly_0:y_2)$, or $\wh \cy_2$ coming from the
map $(z_0:z_1)=(y_1:Ly_0)=(Ky_0:y_2)$. The space $\wh \cy_1$ is
given by the equations
\begin{gather*}
 \Rank  \begin{pmatrix} y_1 & Ly_0 & z_0 \\ Ky_0 & y_2 & z_1\end{pmatrix} 
\leq 1\;, \\
 \al2y_{1}+\al1y_{2}-Qy_{0}=0\;.
\end{gather*}
The projection $\wh \cy_1 \to \cy$ is an isomorphism outside the set given by
$K=Q=L=0$ and $y_{1}=y_{2}=0$. 
In the chart  $z_0=1=y_0$, we  have the equations $y_2=Lz_{1}$ and 
$K=y_1z_1$ (and $\al2y_{1}+\al1Lz_{1}-Q=0$), so for a singular point of 
$K$, we have only a partial resolution, and we indeed keep exactly
the singularity we want to leave until the final step.

The preimage of each of the points given by $K=Q=L=0$ and $y_{1}=y_{2}=0$ is a
$\P^{1}$. These curves lie in the strict transform of $y_2=L=0$, so
over $\Lambda$ in $F_2^- \cup R_1^-$. Therefore,
$\wh\cy_1$ is not the correct resolution for all singularities. We  
use it to resolve only those singularities which are near those which are
contained in $F_{1}$ over $\Delta_{1}$. For the singularities close to those
in $F_2$ over $\Delta_{2}$, we use $\wh\cy_2$. This gives a simultaneous small
resolution of (in general) 
six of the extra singularities for fibres over a neighbourhood
of $\Lambda$ in $\Pi$.

For the last singular point, introduced by $L$,
we only describe the resolution over
$\Lambda$. We consider two cases, depending on whether the point of
tangency $P$ lies on the discriminant curve or not. We treat here only
the more difficult case. The assumption that $L$ is a tangent plane
implies that we can write, at least locally on the base space,
 $Q=LM-N_1N_2$, where $M$, $N_1$ and $N_2$
are linear forms. The equations $L=N_i=0$ define the line $l_i$ on $Q$,
so $F_i$ is now given by $N_i=y_i=0$, and $R_i$ by $N_i=y_{3-i}=0$. 
We derive the
equation
\[
My_1y_2-KN_1N_2y_0^2=0\;.
\]
The small resolution comes about as the graph of a map to the 
$\P^1\times\P^1$-bundle  over $\P^{3}$
\[\P\left(\sier{}(-2)\oplus\sier{}\right) 
\times_{\P^{3}} \P\left(\sier{}\oplus\sier{}(-1)\right)\;.\]
On the target, we have fibre coordinates $(z_0:z_1\semic z_0':z_1')$. 
We set
\[
\frac{z_0}{z_1}=\frac{My_1}{N_2y_0}= \frac{KN_1y_0}{y_2}\;, \qquad
\frac{z_0'}{z_1'}=\frac{My_1}{N_2Ky_0}= \frac{N_1y_0}{y_2}\;.
\]
The exceptional curve of the small resolution consists of two
intersecting rational curves.
We look at the chart centred at their intersection point;
it is given by $z_0'=1$, $z_1=1$. Then $y_2=z_1'N_1y_0$,
$My_1=z_0N_2y_0$ (note that $M\neq 0$ if $L=N_1=N_2=0$) 
and $K=z_0z_1'$. If $P$ is a singular point of $K$, 
we have only a partial resolution. The exceptional curves lie in the
intersection of the strict transforms $R_i^+$ of the $R_i$.

\smallskip
What we have achieved now is that the strict transforms $F^-_i$ of
$F_i$, being isomorphic to  $F_i$, are disjoint ruled surfaces with
smooth neighbourhoods, cf.~Figure \ref{figeen} and \ref{figtwee}.

The remaining singularities are not
contained in one of the subvarieties $\ce_{i}^-, F_{i}^-$,
as over $\Lambda$
they satisfy $y_{1}=y_{2}=0$ and are mapped to the singular points
of the curve $Q=K=0$ in $\P^{3}$.  
Therefore, they do not affect our constructions until the final step.
\end{remark}

\subsection{$\cy^{-} \dashrightarrow \cy^{+}$}
We flop all the lines in  the disjoint union of $F_{1}^-$ and
$F_{2}^-$. Because of their disjointness, we can study the flop for
each component separately. We study both cases at the same time
using our notational convention of dropping all indices, that is, we write
$\Delta$, $E^-$, $ F^-$, and so on, instead of $\Delta_{i}$, $E_{i}^-$,
$F_{i}^-$. 

We start the  construction of the flop by  blowing up $F^-$. To describe the
result, we first compute the normal bundle $N_{F^-/\cy^-}$ of
$F^-\subset \cy^-$. More precisely, we compute the restriction of it
to the fibre over any point $p\in\Delta$. We drop the index $p$ from
the more correct way of writing. We recall that $F^-\cong
\mathbb{F}_{2}$, and so, its Picard group is generated by the classes of
its negative section $B^-=F^-\cap E^-$ and of a fibre $f$ of the
projection $F^- \rightarrow B^-$.

\begin{proposition}
\[
N_{F^-/\cy^-}\cong \sier{F^-}(-B^--f)^{\oplus 2}\;.
\]
\end{proposition}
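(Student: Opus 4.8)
The plan is to compute $\cn_{F^-/\cy^-}$ (restricted, as in the statement, to the fibre over $p\in\Delta$) from the chain $F^-\subset Y^-\subset\cy^-$, where $Y^-$ is the component of the fibre of $\cy^-\to\Pi$ over $p$ that contains $F^-$ (the strict transform of $Y$; we drop indices as agreed). Since the assertion only concerns one fibre, we may replace $\cy^-$ by its restriction to a general smooth curve in $\Pi$ through $p$, transverse to $\Delta$; this restriction is a smooth fourfold near $F^-$ by Remark~\ref{rmk-explicit}, in which $F^-$ has codimension two with unchanged normal bundle, and in which the fibre $\cy^-_\Delta=Y^-\cup\ce^-$ over $p$ is a normal crossing divisor along $E^-$, so that $Y^-$ is a smooth divisor near $F^-$. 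We therefore have an exact sequence of vector bundles on $F^-$
\[
0\lra \cn_{F^-/Y^-}\lra \cn_{F^-/\cy^-}\lra \cn_{Y^-/\cy^-}|_{F^-}\lra 0\;.
\]

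First I would identify the two outer terms in the basis $B^-,f$ of $\Pic(F^-)$. For the left one: in step (2) the map $Y^-\to Y^+=W$ blows down $F^-$ onto the curve $B^+=E^+\cap R^+\cong\P^1$, so $F^-$ is an exceptional divisor and $\cn_{F^-/Y^-}$ is the relative $\sier{F^-}(-1)$ of $F^-\to B^+$; from $\cn_{E^+/W}=\sier Q(-1,-2)$ (Section~\ref{sec:KL}, $n=3$) restricted to $B^+=l_1$ together with $\cn_{l_1/Q}=\sier{\P^1}$ one gets $\cn_{B^+/W}=\sier{\P^1}\oplus\sier{\P^1}(-2)$, hence $F^-=\P(\cn^\vee_{B^+/W})\cong\mathbb{F}_2$, the section $B^-=E^-\cap F^-$ is the negative one, $\sier{F^-}(1)\sim B^-+2f$, and therefore $\cn_{F^-/Y^-}\cong\sier{F^-}(-B^--2f)$. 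For the right one: $\cy^-_\Delta=Y^-+\ce^-$ is a fibre of $\cy^-\to\Pi$, so $\sier{\cy^-}(\cy^-_\Delta)$ restricts trivially to any fibre; restricting it first to $Y^-$ gives $\cn_{Y^-/\cy^-}\otimes\sier{Y^-}(E^-)\cong\sier{Y^-}$, hence $\cn_{Y^-/\cy^-}\cong\sier{Y^-}(-E^-)$ and $\cn_{Y^-/\cy^-}|_{F^-}\cong\sier{F^-}(-B^-)$.

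From the exact sequence it now follows that $\det\cn_{F^-/\cy^-}=\sier{F^-}(-2B^--2f)$ and that $\cn_{F^-/\cy^-}$ restricts to $\sier f(-1)^{\oplus2}$ on every ruling line $f$. Hence $\cn_{F^-/\cy^-}(B^-+f)$ is trivial on all fibres, so it is the pullback $\pi^*G$ of a rank-two bundle $G$ on $B^+\cong\P^1$ with $\det G=\sier{\P^1}$; twisting the exact sequence by $\sier{F^-}(B^-+f)$ and applying $\pi_*$ shows $G$ sits in $0\to\sier{\P^1}(-1)\to G\to\sier{\P^1}(1)\to0$. Such an extension is either split — giving $\cn_{F^-/\cy^-}=\sier{F^-}(-B^--2f)\oplus\sier{F^-}(-B^-)$ — or it is the unique non-split one, $G\cong\sier{\P^1}^{\oplus2}$, giving $\cn_{F^-/\cy^-}\cong\sier{F^-}(-B^--f)^{\oplus2}$, which is the assertion. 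To exclude the split case I would compute $\cn_{F^-/\cy^-}|_{B^-}$ directly in the coordinate charts of Section~\ref{sec:KL} and Remark~\ref{rmk-explicit}, using that $B^-$ is the strict transform of $B^+=\{w_0=w_1=0\}\cap\pi^{-1}(l_1)$: one finds $\cn_{F^-/\cy^-}|_{B^-}\cong\sier{\P^1}(1)^{\oplus2}$, whereas the split alternative would restrict on $B^-$ to $\sier{\P^1}\oplus\sier{\P^1}(2)$.

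The hard part is this last local computation, equivalently the non-triviality of the extension defining $G$; everything before it is routine bookkeeping with normal-bundle sequences on $\mathbb{F}_2$ and $\P^1$. (The non-splitness is precisely what makes the rulings of $F^-$ simultaneously floppable in the total space $\cy^-$ in step (2), the effect on the $Y$-component being the blow-down $Y^-\to W$.) A preliminary point to settle is that $l_1$ may be assumed to avoid the singular points of the discriminant curve $D$, so that $Y^-$ is smooth along all of $F^-$; the remaining positions of $l_1$ would need separate attention.
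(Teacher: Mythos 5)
Your argument follows the same skeleton as the paper's proof: the normal bundle sequence for $F^-\subset Y^-\subset\cy^-$, the identifications $\cn_{F^-/Y^-}\cong\sier{F^-}(-B^--2f)$ and $\cn_{Y^-/\cy^-}|_{F^-}\cong\sier{F^-}(-B^-)$ (the latter by exactly the paper's computation with $\cy^-|_\Delta=Y^-+\ce^-$), and the reduction to non-splitness of the resulting extension, detected on $B^-$. Two points need attention. First, your computation of $\cn_{F^-/Y^-}$ via the blow-down $Y^-\to Y^+=W$ of $F^-$ onto $B^+=E^+\cap R^+$, with $\cn_{B^+/W}$ read off from $\cn_{E^+/W}=\sier{Q}(-1,-2)$, is only available over $\Lambda$: for $p\in\Delta\setminus\Lambda$ the component $Y^-_p$ is a different modification (the one described in Section \ref{sec:4}) and $Y^+_p$ is not $W$. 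Since the flop is performed over all of $\Delta$, the statement is needed for every $p\in\Delta$; the gap is easily closed by noting that the class of $\cn_{F^-_p/Y^-_p}$ in $\Pic(\F_2)\cong\Z^2$ is locally constant in the flat family over the connected base $\Delta$ -- which is in effect how the paper argues, via constancy of the intersection numbers $(F^-\cdot f)$ and $(F^-\cdot B^-)$ and their evaluation at a point of $\Lambda$.

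Second, and more substantially, the step you label the hard part -- that $\cn_{F^-/\cy^-}|_{B^-}\cong\sier{\P^1}(1)^{\oplus2}$, which rules out the split case -- is only announced, not proved; you defer it to a coordinate computation. It has a one-line conceptual proof: $F^-$ (of codimension $2$) and $\ce^-$ (of codimension $1$) meet transversally along $B^-$ (of codimension $3$), so $\cn_{F^-/\cy^-}|_{B^-}\cong\cn_{B^-/\ce^-}$, and $B^-$ is an ordinary line in the fibre $\ce^-_p\cong\P^3$, with normal bundle $\sier{B^-}(1)^{\oplus2}$. This is exactly how the paper concludes. With that supplied, your bookkeeping -- descending $\cn_{F^-/\cy^-}(B^-+f)$ to a rank-two bundle $G$ on $B^+\cong\P^1$ with $\det G=\sier{\P^1}$ sitting in $0\to\sier{\P^1}(-1)\to G\to\sier{\P^1}(1)\to0$, so that $G\cong\sier{\P^1}^{\oplus2}$ in the non-split case -- correctly finishes the proof, and is in fact a little more explicit than the paper about why non-splitness forces $\cn_{F^-/\cy^-}\cong\sier{F^-}(-B^--f)^{\oplus2}$. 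Your closing caveat about $l_1$ meeting $\operatorname{Sing}(D)$ is legitimate but is exactly the assumption implicit in Remark \ref{rmk-explicit}, which places the unresolved singularities off $F^-_i$; it is not specific to your argument.
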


\begin{proof}
The restriction $\cy^-|_\Delta$ consists of the two components
$\ce^-$ and $Y^-$. The fibres of $Y^-\rightarrow\Delta$ over
$\Lambda$ consist of two components but all other fibres are
irreducible. The inclusions $F^- \subset Y^- \subset \cy^-$ equip us
with an exact sequence of normal bundles:
\[
0 \lra N_{F^-/Y^-} \lra
                N_{F^-/\cy^-} \lra
                N_{Y^-/\cy^-}|_{F^-}
    \lra 0\;.
\]
Because this is a sequence of locally free sheaves (in a
neighbourhood of $F^-$), its restriction to a fibre over $p$ is
still exact, justifying our abuse of notation. To compute
$N_{Y^-/\cy^-}|_{F^-}$, we observe that $Y^-$  is a divisor in
$\cy^-$, hence $N_{Y^-/\cy^-}|_{F^-} \cong \sier{\cy^-}(Y^-) \otimes
\sier{F^-}$. Using $\sier{\cy^-}(\cy^-|_\Delta) \otimes \sier{F^-}
\cong \sier{F^-}$, the equality $Y^- = \cy^-|_\Delta - \ce^-$ in
$\Pic(\cy^-)$ and the fact that $\ce^-$ and $F^-$ intersect
transversally along $B^-$, we obtain $\sier{\cy^-}(Y^-) \otimes
\sier{F^-} \cong \sier{F^-}(-\ce^-\cdot F^-) \cong
\sier{F^-}(-B^-)$.

We write $N_{F^-/Y^-}\cong \sier{F^-}(aB^-+bf)$ with certain
integers $a$, $b$. To compute these numbers, we observe that they
appear as intersection numbers in $Y^-$ as follows: $(F^-\cdot f) =
(N_{F^-/Y^-}\cdot f) = ((aB^-+bf)\cdot f)_{F^-} = a$ and $(F^-\cdot
B^-) = (N_{F^-/Y^-}\cdot B^-) = ((aB^-+bf)\cdot B^-)_{F^-} = b-2a$.

Using $F^-\cdot E^-=B^-$, we see that $(F^-\cdot B^-) = (F^-\cdot
F^-\cdot E^-) = (B^-\cdot B^-)_{E^-} = 0$, because $B^-$, as a curve
in $E^-$, is a fibre of one of the projections of $E^-\cong
\P^{1}\times\P^{1}$ to $\P^{1}$. This gives $b=2a$.

Next, we compute $a=(F^-\cdot f)$. For all $p\in\Delta$, we choose  a
fibre by specifying a point in $B^-$. As $B^-$ is a line on the
image of the  quadric $Q$ in the section $\ce^-\cong \P^3$, it
intersects the image of a general plane (not tangent to $Q$)
transversally at one point. As we now have two flat families over
$\Delta$, the surfaces $F^-$ and the fibres $f$, the intersection
number $(F^-\cdot f)$ does not depend on $p$ (here we use that
$\Delta$ is connected). So, we can restrict our attention to a
general point $p\in\Lambda$. The map $\cy^-\to\cy$ is an isomorphism
in the neighbourhood of the fibre $f$, so we compute $(F\cdot f)$ on
$Y$. Now  $Y$ is reducible. The component isomorphic to $\P^{3}$
does not meet $F$, so we can compute inside the component $Y_0$, being a
fibre of $Y_{\Delta}\to\Delta$.
Because $F$ is defined by $y_{i}=0$ over one of the lines given by
$Q=L=0$, the fibre of $Y_0 \rightarrow Q$ which contains $f\subset
F$ has a second component, which intersects $F$ transversally at one
point.  The intersection number of $F$ with any fibre of the
projection of the conic bundle is zero, as $F$ is disjoint to
generic fibres. Therefore,  $a=(F\cdot f) = -1$.

Our  exact sequence has therefore the following form:
\begin{equation}\label{sequ:curve}
  0 \lra \sier{F^-}(-B^--2f)  \lra
                N_{F^-/\cy^-}
    \lra
                \sier{F^-}(-B^-)
    \lra 0\;.
\end{equation}
To see that this sequence does not split, we restrict it to the curve
$B^-$. As $F^-$ intersects $\ce^-$ transversally in $B^-$, we have
$N_{F^-/\cy^-}|_{B^-}\cong N_{B^-/\ce^-}$. If the sequence splits,
then $N_{F^-/\cy^-}|_{B^-} \cong \sier{F^-}(-B^--2f)|_{B^-} \oplus
\sier{F^-}(-B^-)|_{B^-} \cong \sier{B^-} \oplus \sier{B^-}(2)$,
which is not the case as $B^-$ is an ordinary line in $\ce^-\cong
\P^{3}$ with normal bundle $\sier{B^-}(1)^{\oplus 2}$. The statement
follows. \end{proof}

Let $\wt\cy$ be the blow up of $F^-$ in $\cy^-$ (or more precisely,
of $F^-_1$ and $F^-_2$). Note that $F^-$ is a family of surfaces over $\Delta$,
so that its codimension in $\cy^-$ is equal to $2$.
We denote the exceptional locus of the blow
up by $\wt F$. From  our computation of the normal bundle, we know
$\wt F \cong
\P(N_{F^-/\cy^-}^{\vee}) \cong F^-\times \P^{1}$
and $N_{\wt F/\wt\cy}\cong \sier{\wt F}(-1) \cong \sigma^{\ast}
\sier{F^-}(-B^--f) \otimes \tau^{\ast} \sier{\P^{1}}(-1)$, where
$\sigma$, $\tau$ are the two projections of $\wt F \cong F^-\times
\P^{1}$ onto its factors.

We are now going to contract $\wt F$ inside $\wt \cy$.  To define
this contraction, we note that the isomorphism $\wt F\cong F^-\times
\P^1$ makes $\wt F \to \wt B$ into a $\P^{1}$-bundle, where $\wt
B\subset\wt \cy$ is the preimage of $B^-\subset\cy^-$. 
We have a $\P^{1}$-bundle $\wt F \to  \wt B$ with fibre $\wt f$. 
We compute $N_{\wt F/\wt\cy}|_{\wt f} \cong \sigma^{\ast} 
      \sier{F^-}(-B^--f) \otimes
        \tau^{\ast} \sier{\P^{1}}(-1)  \otimes \sier{\wt f}\cong
\sier{\wt f}(-1)$, because $(f\cdot f)_{F^-}=0$ and $(B^-\cdot
f)_{F^-}=1$. By the Castelnuovo--Moishezon--Nakano criterion (Theorem \ref{CMN}),
 $\wt F$ can be contracted.

This shows that there exists a morphism $\wt\cy \to \cy^+$
which contacts both $\wt F_i$.
The image of $\wt F$ in $\cy^+$ is denoted as $F^+$. This
surface no longer lies in $Y^+$, it is contained in $\ce^+$. It
intersects $Y^+$ along the curve $B^+:=F^+ \cap E^+$. The pair $(E^-,B^-)$ is
isomorphic to $(E^+,B^+)$ under the flop. 
The component $Y^+$ is obtained from
$Y^-$ by contracting $F^-$ along the fibration $F^-\to B^-$.

We describe the subvarieties involved in the flop:
\[
\xymatrix@=1pc
{
  \ce^- && E^-\ar[ll]\ar[rr] && Y^- && && \ce^+ && E^+\ar[ll]\ar[rr] && Y^+ \\
  && && &\ar@{<-->}[rr]& && && && \\
  && B^-\ar[rr]\ar[uu] && F^-\ar[uu] && && F^+\ar[uu] && B^+\ar[uu]\ar[ll] &&\qquad .
}
\]
All arrows on both sides are inclusions.

We consider the effect of the flop on the divisor $\cR$.  
We write $\cR_i$ because both indices occur.
As $F_{3-i}^-\subset \cR_i^-$, the blow-up does not change
$\cR_i^-$. We obtain divisors $\wt\cR_i$ in
$\wt\cy$ which are isomorphic to $\cR_i^-$. During
the contraction to $\cy^+$, however, the component $F_{3-i}$ of
$\wt\cR_i$ over $\Delta_{3-i}$ will be contracted. The
resulting divisor $\cR_i^+\subset\cy^+$ still has irreducible
fibres over $\Pi\setminus\Delta_{3-i}$, but consists of two components only
over $\Delta_{3-i}$.

\subsection{$\cy^+\lra \cz$}\label{subsec:33}
We continue to omit the subscripts $i$ to deal with two disjoint
subvarieties at the same time. The goal of this step is the
contraction of the strict transform $\ce^+$ of $\ce^-$ inside
$\cy^+$ to a $\P^{1}$-bundle over $\Delta$, thereby making the
fibres of the obtained family $\cz\to\Pi$ irreducible.

The first useful observation is that $\ce^+$ is the blow-up of
$\ce^-$ in $B^-$. Over each point $p\in \Delta$, the curve $B^-$ is
an ordinary line in $\ce^-\cong \P^3$, so blowing up makes $\ce^+$
into a $\P^2$-bundle over a $\P^1$-bundle over $\Delta$.

To contract $\ce^+$ inside $\cy^+$, we first construct
a map $c\colon \ce^+\rightarrow C$.
We define
\[
C:= \P(\delta_{\ast}(\sier{\ce^-}(1)\otimes \mathcal{I}_{B^-}))\;,
\]
where $\delta:\ce^-\rightarrow \Delta$ is the projection,
$\sier{\ce^-}(1)$ denotes the pull back of $\sier{\P^{3}}(1)$ under
the projection $\ce^-\cong \P^{3}\times\Delta \rightarrow \P^{3}$
and $\mathcal{I}_{B^-}\subset \sier{\ce^-}$ denotes the ideal sheaf
of $B^-\subset \ce^-$. Because $B^-$ is a family of lines, $C$ is a
$\P^{1}$-bundle over $\Delta$. There exists a natural morphism
$c\colon \ce^+\rightarrow C$. For any $p\in\Delta$, the fibres are
the strict transforms of the planes in $\ce^-$ which contain the
line $B^-$.

To show that we can contract $\ce^+$ inside $\cy^+$ along the morphism 
$c$, we
have to show that the normal bundle $N_{\ce^+/\cy^+}$ restricts to
$\sier{\P^{2}}(-1)$ on the fibres of $c$.

As a divisor in $\cy^+$, we can write $\ce^+$ as the difference
$\cy^+|_{\Delta} - Y^+$ and, as before, we obtain $N_{\ce^+/\cy^+}
\cong \sier{\cy^+}(-Y^+) \otimes \sier{\ce^+}$. Each plane in a
fibre intersects $Y^+$ along a line, so indeed, the normal bundle
restricts to $\sier{\P^{2}}(-1)$ on the fibres of $c$. 

The above implies
the existence of a morphism $\cy^+ \rightarrow \cz$ contracting
$\ce^+_1$ and $\ce^+_2$ to $\P^{1}$-bundles $C_1$ and $C_2$ over
$\Delta_{1}$ and $\Delta_{2}$ respectively.
This morphism contracts the additional component of $\cR_i^+$ over
$\Delta_{3-i}$, so that the image $\cs_i$ of $\cR_i^+$ in
$\cz$ is a family of irreducible divisors.

\subsection{$\wt\cz\lra \cz$}
In the final step, we resolve the remaining fibre singularities. Over
$\Lambda$, the small resolution is given by construction of the family of
\KLB manifolds we started out with. We end up with a family
of smooth manifolds $\wt\cz\rightarrow\Pi$.
The strict transforms of the  divisors $\mathcal{S}_i\subset\mathcal{Z}$ are
divisors $\widetilde{\mathcal{S}}_i \subset\widetilde{\mathcal{Z}}$.

\subsection{The main theorem}
\begin{theorem}
Let  $\wt\cz\rightarrow\Pi$ be a family, constructed as above from a
given stratum of \KLB manifolds. The fibres over
$\Lambda\subset \Pi$ are \KLB manifolds, and the family
$\wt\cz\rightarrow\Pi$ together with the two
divisors $\wt\cs_1$ and $\wt\cs_2$ is a deformation, which
locally around $\Lambda$ is versal for
deformations of triples $(\wt Z,\wt S_1,\wt S_2)$.
\end{theorem}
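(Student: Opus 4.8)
The plan is to deduce versality from the standard criterion. First one records that the fibres over $\Lambda$ are indeed Kurke-LeBrun manifolds carrying the divisors $\wt S_1$, $\wt S_2$: over $\Lambda$ the chain $\cy\leftarrow\cy^{-}\dashrightarrow\cy^{+}\to\cz\leftarrow\wt\cz$ reproduces, by construction, the passage from the conic bundle $W$ (to which $Y_\Lambda$ is birational, Lemma~\ref{lem:birational}) to the chosen small resolution, and $\wt\cs_i|_\Lambda=\wt S_i$. Now set $f\colon\wt S_1\amalg\wt S_2\to\wt Z$. By Lemma~\ref{lemfour} the map $H^1(\Theta_{\wt Z})\to H^1(\cn_{\wt S_1})\oplus H^1(\cn_{\wt S_2})$ is onto, so $T^2_f=0$; hence the deformation functor of $f$ is unobstructed and its miniversal deformation has smooth base of dimension $\dim T^1_f$ (Theorem~\ref{raakzss}). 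Since $\Pi$ is smooth near $\Lambda$ by Lemma~\ref{lem:smooth}, it is enough to prove that for each $\lambda\in\Lambda$ the Kodaira--Spencer map $\rho_\lambda\colon T_\lambda\Pi\to T^1_f$ of the family $(\wt\cz,\wt\cs_1,\wt\cs_2)\to\Pi$ is surjective: then the classifying map $(\Pi,\lambda)\to\Def_f$ is a submersion of smooth germs, so the family is versal at $\lambda$, and, versality being an open condition, on a whole neighbourhood of $\Lambda$.

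The next step is to organise the target. Since $T^2_f=0$, the long exact sequence of $f$ gives
\[
0\lra A\lra T^1_f\lra B\lra 0,\qquad A=\Coker\bigl(H^0(\Theta_{\wt Z})\to H^0(\cn_{\wt S_1})\oplus H^0(\cn_{\wt S_2})\bigr),
\]
with $B=\Ker\bigl(H^1(\Theta_{\wt Z})\to H^1(\cn_{\wt S_1})\oplus H^1(\cn_{\wt S_2})\bigr)$, the image of $T^1_f$ in $H^1(\Theta_{\wt Z})$. By \eqref{thetawz}, the diagram \eqref{comdia} and the explicit computation at the end of Section~\ref{sec:infdef}, $H^1(\Theta_{\wt W})\subset B$ is exactly the kernel of $H^1(\Theta_{\wt Z})\to H^1(\cn_{\wt C_1})\oplus H^1(\cn_{\wt C_2})$, while $B/H^1(\Theta_{\wt W})\cong L_1\oplus L_2$, where $L_i=\Ker\bigl(H^1(\cn_{\wt C_i})\to H^1(\cn_{\wt S_i}|_{\wt C_i})\bigr)$ is the line cut out in the coordinates of \eqref{chartcoord} by $a_1\tau_{0\semic 1}-a_0\tau_{1\semic 1}=0$, resp.~$b_1\sigma_{0\semic 1}-b_0\sigma_{1\semic 1}=0$. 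Thus $\rho_\lambda$ is surjective as soon as its image contains $A$ and the composition $T_\lambda\Pi\to T^1_f\to H^1(\Theta_{\wt Z})$ covers both $H^1(\Theta_{\wt W})$ and a complement $L_1\oplus L_2$; it suffices that the directions tangent to $\Lambda$ account for the first part and the two normal directions $\partial_{\al1},\partial_{\al2}$ for the second.

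For the $\Lambda$-directions I would argue as follows. Up to the \'etale covering $\Lambda\to\wl\Lambda$ and the ambiguity in lifting $\vp$ to a cubic $K$ on $\P^3$ --- which at $\al1=\al2=0$ changes the fibre only by a trivial deformation, cf.~\eqref{trivdef}, hence lies in $\Ker\rho_\lambda$ --- the restricted family over $\Lambda$ is the family of Kurke-LeBrun triples given by a $\delta$-constant deformation of the discriminant curve together with a choice of tangent plane. Deforming the discriminant curve inside its $\delta$-constant stratum is versal for $\delta$-constant deformations (Section~\ref{sec:KL}, Lemma~\ref{lem}), so by Proposition~\ref{raakw} and \eqref{thetawz} these directions surject onto $H^1(\Theta_{\wt W})$. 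Moving the tangent plane, i.e.~the point $(a_0:a_1\semic b_0:b_1)\in Q^\ast$, moves $l_1$ and $l_2$, hence $\wt S_1$ and $\wt S_2$, independently; the induced map $T_\lambda\Pi\to H^0(\cn_{\wt S_1})\oplus H^0(\cn_{\wt S_2})\cong H^0(\sier{\wt S_1})\oplus H^0(\sier{\wt S_2})$ is the ``move the line'' map of the proof of Theorem~\ref{raakzss} and is an isomorphism of two-dimensional spaces, so $\rho_\lambda$ surjects onto $A$.

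The heart of the proof, and the step I expect to be the main obstacle, is to show that $\rho_\lambda(\partial_{\al1})$ and $\rho_\lambda(\partial_{\al2})$ span $L_1\oplus L_2$. Since both are images of elements of $T^1_f$, they already lie in $L_1\oplus L_2$ inside $H^1(\cn_{\wt C_1})\oplus H^1(\cn_{\wt C_2})$ ($H^1(\Theta_{\wt W})$ maps to $0$ there), and $L_1\oplus L_2$ is two-dimensional; hence it suffices to show that the image of $\partial_{\al1}$ in $H^1(\cn_{\wt C_1})$ and that of $\partial_{\al2}$ in $H^1(\cn_{\wt C_2})$ are non-zero. For this I would differentiate the equations \eqref{ydef} with respect to $\al1$, resp.~$\al2$, at $\al1=\al2=0$ and carry the resulting first-order deformation through the chain $\cy\leftarrow\cy^{-}\dashrightarrow\cy^{+}\to\cz\leftarrow\wt\cz$, using the explicit charts of Remark~\ref{rmk-explicit}, the blow-up and blow-down charts of Sections~\ref{subsec:31}--\ref{subsec:33}, and the coordinates \eqref{chartcoord}, \eqref{chart} near $\wt C_i$. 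The expected outcome, matching the ``$\mathrm{Bl}_0\C^2\times\mathrm{Bl}_0\C^2\to\C^2\times\C^2$ with fibre $Q$ over the origin'' picture of Section~\ref{sec:infdef}, is that $\partial_{\al1}$ is carried to a non-zero multiple of the \v Cech cocycle $\frac1{t_1}\bigl(a_0\frac\partial{\partial v_{0\semic 0}}+a_1\frac\partial{\partial v_{1\semic 0}}\bigr)$, which lies on $L_1$, and $\partial_{\al2}$ to a non-zero element of $L_2$; being non-zero in distinct factors they are independent, so they span $L_1\oplus L_2$, forcing $\rho_\lambda$ to be surjective. The delicate point is precisely this transport of the deformation class: one must check that the flop $\cy^{-}\dashrightarrow\cy^{+}$ and the contractions $\cy^{+}\to\cz$ and $\cz\leftarrow\wt\cz$ neither kill the $\al_i$-direction nor push it into $H^1(\Theta_{\wt W})$, which is where keeping track of the ruled surfaces $F_i^-$ and of the exceptional curves of the small resolutions is essential.
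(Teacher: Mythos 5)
Your reduction of the theorem to the surjectivity of the Kodaira--Spencer map, and the decomposition of $T^1_f$ into the piece $A$ coming from $H^0(\cn_{\wt S_1})\oplus H^0(\cn_{\wt S_2})$, the piece $H^1(\Theta_{\wt W})$ coming from deforming the conic bundle, and the two-dimensional quotient $L_1\oplus L_2$, agree with the structure of the paper's proof, and the handling of the $\Lambda$-directions and of $A$ is correct. But the heart of the argument --- that $\rho_\lambda(\partial_{\al1})$ and $\rho_\lambda(\partial_{\al2})$ span $L_1\oplus L_2$ --- is precisely the point you leave unproved: you state an ``expected outcome'' of a chart-by-chart transport of a \v Cech cocycle through $\cy\leftarrow\cy^-\dashrightarrow\cy^+\to\cz\leftarrow\wt\cz$ without carrying it out, and you yourself flag this transport as the delicate step. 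Note also that your closing inference ``being non-zero in distinct factors they are independent'' silently uses the vanishing of the $L_2$-component of $\rho_\lambda(\partial_{\al1})$ (and symmetrically), which is itself part of what must be established. As it stands this is a genuine gap, not a completed proof.

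The paper fills this gap without any cocycle transport, by a structural normal-bundle argument that you could adopt. For the one-parameter deformation $T\subset\Delta_2$ (varying $\al1$) one computes the connecting homomorphisms of the two sequences $0\to N_{C_{i,p}/Z_p}\to N_{C_{i,p}/\cz_T}\to\sier{C_{i,p}}\to 0$. For $i=2$ the curve $C_{2,p}$ moves in the $\P^1$-bundle $C_2\to\Delta_2$ produced in the contraction step, so $N_{C_{2,p}/\cz_T}$ contains a trivial subbundle with quotient $N_{C_{2,p}/Z_p}\cong\sier{}(-2)^{\oplus2}$; the sequence splits and the connecting map is zero, i.e.\ the $L_2$-component vanishes. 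For $i=1$ the curve $C_{1,p}$ is the image of the contraction of $\ce_1^+|_p\cong\P(\sier{}\oplus\sier{}(-1)\oplus\sier{}(-1))$, which forces $N_{C_{1,p}/\cz_T}\cong\sier{}(-2)\oplus\sier{}(-1)\oplus\sier{}(-1)$; since this bundle has no sections, the connecting map $H^0(\sier{C_{1,p}})\to H^1(N_{C_{1,p}/Z_p})$ is injective, so the $L_1$-component is non-zero. Interchanging the roles of $\al1$ and $\al2$ then yields two vectors lying in $L_1\oplus 0$ and $0\oplus L_2$ respectively, which span the kernel. If you want to keep your explicit-chart route you must actually perform the transport and, in particular, verify the vanishing statement; the normal-bundle argument is both shorter and exactly where the geometry of the flop and of the contraction $\cy^+\to\cz$ enters.
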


\begin{proof}
The construction of $\wt\cz$ and Lemma \ref{lem:birational} guarantee that the 
fibres over $\Lambda$ are indeed \KLB manifolds.
Let $\wt Z_p$ be an \KLB manifold over the point
$p\in\Lambda\subset\Pi$ in the family $\wt\cz\rightarrow\Pi$. The space
$T^1_f$ of infinitesimal deformations for our 
deformation problem is described in the proof of Theorem \ref{raakzss}.
We have to show that the Kodaira--Spencer map $T_p\Pi
\to T^1_f$ is surjective.  Changing the lines in our construction,
that is, changing $L$, gives  surjectivity on the image of
$H^0(\cn_{\wt S_{1,p}})\oplus H^0(\cn_{\wt S_{2,p}})$. Therefore, we
have to study the image of $T^1_f$ in $H^1(\Theta_{\wt Z_p})$. On
the subspace of deformations  coming from deforming the conic
bundle, we again have surjectivity,  as the stratum $\Lambda_0$
gives a versal (but not miniversal) deformation of the conic bundle.
We are left with showing that the image of the $\al1$, $\al2$
deformations under the Kodaira--Spencer map span the two-dimensional
kernel of $H^1(\cn_{\wt C_{1,p}})\oplus H^1(\cn_{\wt C_{2,p}})\to
H^1(\cn_{\wt S_{1,p}}|_{\wt C_{1,p}}) \oplus H^1(\cn_{\wt
S_{2,p}}|_{\wt C_{2,p}})$. Here, the notation $\wt C_{i,p}$
means that we consider the fibre over $p$ of the $\P^1$-bundle
$\wt C_i\subset\wt\cz$, 
which is the isomorphic pre-image under the small resolution
of the bundle  $C_i\subset\cz$,  defined in Section \ref{subsec:33}.

Given a 1-parameter deformation $\wt\cz_T\to T$ of $\wt Z_p$, the image of the
Kodaira--Spencer map  in $H^1(\Theta_{\wt Z_p})$ is given by the connecting
homomorphism in the cohomology sequence of the sequence
\begin{equation}\label{kodsp}
0\lra \Theta_{\wt Z_p} \lra \Theta_{\wt \cz_T}|_{\wt Z_p} \lra
\cn_{\wt Z_p/\wt \cz_T}\cong \sier{\wt Z_p} \lra 0\;.
\end{equation}
We consider a 1-parameter deformation with base space $T\subset\Delta_2$,
given in terms of the equation of the form (\ref{ydef}) by
\begin{eqnarray*}
y_{1}y_{2}-KLy_{0}^{2}&=&0\;,\\
\al1y_{2}-Qy_{0}&=&0\;,
\end{eqnarray*}
where $KL$ is unchanged. 
We look at the image of the Kodaira--Spencer map under
the surjection $H^1(\Theta_{\wt Z_p})\to H^1(\cn_{\wt
C_{1,p}})\oplus H^1(\cn_{\wt C_{2,p}})$. As the curves in question
are disjoint from the singular points, we can do the computation on
$Z_p$, and consequently drop all tildes from the notation. The exact
sequence (\ref{kodsp}) gives two normal bundle sequences,
\begin{equation}\label{nbsone}
0\lra N_{C_{1,p}/ Z_p} \lra N_{C_{1,p}/ \cz_T} \lra N_{ Z_p/
\cz_T}|_{C_{1,p}}\cong \sier{C_{1,p}} \lra 0\;,
\end{equation}
and
\begin{equation}\label{nbstwo}
0\lra N_{C_{2,p}/ Z_p} \lra
N_{C_{2,p}/ \cz_T} \lra N_{ Z_p/ \cz_T}|_{C_{2,p}}\cong
\sier{C_{2,p}} \lra 0\;.
\end{equation}
The restriction map $H^0(\sier{Z_p})\cong H^0(\cn_{ Z_p/ \cz_T})
 \to  H^0( N_{ Z_p/\cz_T}|_{C_{1,p}} ) \oplus
H^0( N_{ Z_p/\cz_T}|_{C_{2,p}} )
\cong
H^0(\sier{C_{1,p}})\oplus
H^0(\sier{C_{2,p}})$ is the diagonal embedding. Furthermore, we know
that $N_{C_{i,p}/ Z_p}\cong\sier{C_{i,p}}(-2) \oplus
\sier{C_{i,p}}(-2)$ for $i=1,2$.

We first look at the second sequence \eqref{nbstwo}. 
In our construction  $C_2$ is
a $\P^1$-bundle over $\Delta_2$. So, the curve $C_{2,p}$ lies in a
family $C_{2,T} \to T$. We can compute $N_{C_{2,p}/ \cz_T}$ from the
sequence
\[
0\lra N_{C_{2,p}/ C_{2,T}} \lra N_{C_{2,p}/ \cz_T} \lra N_{ C_{2,T}/
\cz_T}|_{C_{2,p}} \lra 0\;.
\]
As $N_{ C_{2,T}/ \cz_T}|_{C_{2,p}} \cong N_{C_{2,p}/
Z_p}\cong\sier{\P^1}(-2) \oplus \sier{\P^1}(-2)$, we conclude that
the sequence splits, and that $N_{C_{2,p}/ \cz_T}\cong \sier{}\oplus
\sier{}(-2)\oplus \sier{}(-2)$. Therefore, also the sequence
(\ref{nbstwo}) splits and the connecting homomorphism
$H^0(\sier{C_{2,p}})\to H^1(N_{C_{2,p}/ Z_p})$ is the zero map.

Secondly, we look at the first sequence \eqref{nbsone}. The map
$Y_2^+|_T\to \cz_T$ contracts the $\P^2$ bundle $\ce_1^+|_p$ to the
curve $C_{1,p}$. As $\ce_1^+|_p$ is the blow up of $\P^3$ in an
ordinary line, it is isomorphic to $\P(\sier{}\oplus \sier{}(-1)\oplus
\sier{}(-1))$ as bundle over $\P^1$. This shows that the normal
bundle $N_{C_{1,p}/ \cz_T}$ is a twist of $\sier{}\oplus
\sier{}(1)\oplus \sier{}(1)$, and as its degree is $-4$, we find that
$N_{C_{1,p}/ \cz_T}\cong \sier{}(-2)\oplus \sier{}(-1)\oplus
\sier{}(-1)$. Therefore, the connecting homomorphism
$H^0(\sier{C_{1,p}})\to H^1(N_{C_{1,p}/ Z_p})$ is non-trivial.

The Kodaira--Spencer map for the analogous deformation in $\Delta_1$
is non-trivial on the other factor.
So, indeed, the images span the two-dimensional kernel in question.
\end{proof}


\section{The fibres over $\Delta_i$}\label{sec:4}
In this section, we investigate the structure of the fibres of the
family $\wt Z$ over $\Delta_1$  and $\Delta_2$. These spaces are halfway between
modifications of conic bundles (over $\Lambda=\Delta_1\cap \Delta_2$)
and double solids. Of the two pencils
of surfaces on the conic bundle one survives. As this is not
compatible with the real structure, these manifolds do not figure in
the twistor literature.

If $\al1\neq0$, but $\al2=0$, we can eliminate $y_2$ and have only
one equation in the $\P^1$-bundle $\P(\co\oplus\co(-2))$ 
over $\P^3$. 
As before, we deal with a single fibre for fixed $\al1\ne0, \al2=0$,
but continue to simplify notation by not introducing additional subscripts.
After dividing by $y_0$, which cuts away the component $\ce_2$,
we have the following equation for $Y_2$:
\begin{equation}\label{delta}
  y_{1}Q-\al1KLy_{0}=0\;.
\end{equation}
It describes the blow-up of $\P^3$ in the curve $Q=KL=0$. The
singularities of the blow-up occur at the singularities of the
curve. These have to be resolved with a small resolution. For the
singularities on $L=0$, it is determined by our construction. To
describe it in terms of a modification of $\P^3$, we first study the
local description of the blow-up of a curve with ordinary double
point in a smooth threefold.

Let the curve be given by  $z=xy=0$. The blow-up $sz-txy=0$ can be
covered by two affine charts, the first one $s=1$, which is smooth,
containing the strict transform of $z=0$, the second one ($t=1$) having an
$A_1$-singularity: $sz-xy=0$. A small resolution of this $A_1$-singularity is
given by $(s:y)=(x:z)=(u:v)$. We have in total
three charts, with
$(x,y,z) = (x,y,txy) = (zu,y,z) = (x,vs,xv)$. The same manifold is obtained by
first blowing up  the branch $z=x=0$, setting $(x:z)=(u:v)$, and then
the strict transform $y=v=0$ of $y=z=0$, setting $(y:v)=(s:t)$.
Interchanging the role of $x$ and $y$ gives the other small
resolution.

We now return to the space $Y_2$, given as subspace of a
$\P^1$-bundle over $\P^3$ by Equation (\ref{delta}) for
$\al1\neq0$. Over $l_1\subset Q$ lies $R_1$, over $l_2\subset Q$
lies $F_2$. The surfaces $F_1$ lie only over $\al1=0$. The planes
$R_2$, given by $y_1=L=0$ are contained in $Y_2$. The singular point
of $Y_2$ above the point of tangency of the quadric and its tangent
plane  $L=0$ has in general non-zero $y_1$-coordinate. The correct small
resolution is determined by the condition that no exceptional curve
is contained in the strict transform $F_2^-$. 

This small resolution of $Y_{2}$ is obtained from $\P^{3}$ by first
blowing up $l_1$, then the curve $K=0$ on the strict transform of
$Q$, introducing singularities coming from the singular points of
the curve and finally blowing up $l_2$, as lying on the strict
transform of $Q$. The exceptional surface $F_2^-$ is a $\P^1$-bundle
over $\P^1$. The flop $\cy^-\dashrightarrow \cy^+$ has on $Y_2^-$
the effect of contracting $F_2^-$ again. So, after the second step,
blowing up the curve $K=0$ on the quadric, we have already obtained
the space $Y_2^+$ (over $\al1\neq0$). This discussion proves the
following proposition.

\begin{proposition}\label{blowpp}
Let $\wt Z$ be an \KLB manifold, which is a small resolution
of a space $Z$ defined by Equation \eqref{klb} with $n=3$. Fix
a line $l_1=\pr1^{-1}(s)$ of the first ruling of the quadric $Q$.
The fibres of the family  $\wt \cz\to \Pi$ over $\Delta_2\setminus
\Lambda$, constructed from lines $l_2$ of the other ruling, such
that $L$ in Equations \eqref{ydef} satisfies $\{Q=L=0\}=l_1\cup
l_2$, are isomorphic for all $l_2$. Such a manifold $\wt Z_2$ is a
modification of $\P^3$, which is the composition of the following
birational maps
\[
\P^3 \longleftarrow Y_2' \longleftarrow Y_2^+ \lra Z_2
\longleftarrow \wt Z_2\;,
\]
where the first map is the blow-up of $l_1$, the second the blow-up
of the curve $K=0$ on the strict transform $Q'$ of $Q$ and the map
to $Z_2$ is the blow-down of the strict transform $Q^+$ along the
first ruling. The singularities of $Z_2$ are the same as those of
$Z$, and the map $\wt Z_2 \lra Z_2 $ resolves them in the same way
as $\wt Z \lra Z$ does.
\end{proposition}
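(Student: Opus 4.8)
Since the proposition simply records what the preceding discussion has worked out, the plan is to assemble that material. Fix a point $p\in\Delta_2\setminus\Lambda$, so that $\al1\neq0$ (which may be rescaled to $\al1=1$ by rescaling $y_1$) and $\al2=0$. The fibre of $\cy$ over $p$ is $\ce_2\cup Y_2$; dividing the first equation of \eqref{ydef} by $y_0$ discards $\ce_2$ and presents $Y_2$ by \eqref{delta}, which exhibits it as the blow-up of $\P^3$ along $\{Q=KL=0\}=D\cup l_1\cup l_2$, where $D\colon K|_Q=0$ has type $(3,3)$ and $l_1\cup l_2$ is the line pair cut on $Q$ by the tangent plane $L=0$. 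The heart of the argument is then to restrict the chain $\cy\leftarrow\cy^-\dashrightarrow\cy^+\to\cz\leftarrow\wt\cz$ to (the strict transforms of) the component $Y_2$ and identify it with $\P^3\leftarrow Y_2'\leftarrow Y_2^+\to Z_2\leftarrow\wt Z_2$.

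I would carry this out in four steps. First, using the local model for blowing up a nodal curve in a smooth threefold together with Remark \ref{rmk-explicit}, the restriction of $\cy^-\to\cy$ to $Y_2$ is the composition of the blow-up of $l_1$, then of the strict transform $D'$ of $D$ on the strict transform $Q'$ of $Q$, then of the strict transform of $l_2$: the nodes of $D$ contribute the persisting $cA_k$-singularities, the three blow-ups together effect small resolutions of the extra singular points introduced by $L$, and the last of them produces the ruled surface $F_2^-$. Secondly, the flop $\cy^-\dashrightarrow\cy^+$ contracts the lines on $F_2^-$, so on $Y_2$ it simply contracts $F_2^-$ along its ruling; hence $Y_2'=\mathrm{Bl}_{l_1}\P^3$ and $Y_2^+=\mathrm{Bl}_{D'}Y_2'$, and $Y_2^+$ is smooth except at the $cA_k$-points over the singular points of $D$. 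Thirdly, the restriction to $Y_2^+$ of the morphism $\cy^+\to\cz$ (which contracts $\ce_2^+$) contracts the strict transform $Q^+$ of $Q$ along the first ruling --- a blow-down which, as for $\beta\colon W\to Z$, is an isomorphism off the singular locus --- producing $Z_2$. Finally $\wt\cz\to\cz$ restricts to the small resolution $\wt Z_2\to Z_2$ of the remaining $cA_k$-singularities, and the covering $\Pi\to\wl\Pi$ guarantees this is the same choice of resolution as on $\wt Z$.

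Granting this description, the two remaining assertions follow at once. The spaces $Y_2'$, $Y_2^+$, $Z_2$ and $\wt Z_2$ are built only out of $l_1$, $K$, $Q$ and the singularities of $D$; the line $l_2$ (equivalently the plane $L$) drops out at the flop and $\al1$ was rescaled away, so they do not depend on $l_2$ --- this is the isomorphism for all $l_2$. Moreover $Z_2$ has $cA_k$-singularities precisely over the singular points of $D$ (by the explicit form $Y_2^+=\mathrm{Bl}_{D'}\mathrm{Bl}_{l_1}\P^3$, which is smooth away from those nodes, and the fact that the blow-down to $Z_2$ is an isomorphism near them), and by Proposition \ref{prop:sing} these are isomorphic to the isolated singularities of $\cy$ over $\al1=\al2=0$, which are precisely the $cA_k$-points of $W$, hence of $Z$; so $\wt Z_2\to Z_2$ resolves them exactly as $\wt Z\to Z$ does. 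The one place that needs genuine care is the second and third steps: keeping track of which of the surfaces $F_2^-$, $R_1$, $R_2$, $\ce_2^+$ is created, contracted or survives at each stage, and checking that the contraction of $Q^+$ along the first ruling is indeed the restriction of $\cy^+\to\cz$. All of this, however, is local and has already been settled in the discussion preceding the proposition and in Remark \ref{rmk-explicit}.
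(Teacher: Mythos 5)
Your proposal is correct and follows essentially the same route as the paper: the paper's proof is precisely the discussion preceding the proposition, which uses equation \eqref{delta} to exhibit $Y_2$ as the blow-up of $\P^3$ along $Q=KL=0$, the local model $sz-txy=0$ to identify the relevant small resolution as the successive blow-ups of $l_1$, then $K=0$ on $Q'$, then $l_2$ (the order being fixed by the requirement that no exceptional curve lie in $F_2^-$), and the observation that the flop contracts $F_2^-$ so that $Y_2^+$ is already reached after the second blow-up, with the contraction to $Z_2$ and the small resolution supplied by the general construction of Section 3 together with Proposition \ref{prop:sing}. Your write-up matches this step for step, including the independence of $l_2$ and $\al1$.
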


We can use this direct description of the threefold $Z_2$ to study
the surfaces $\wt S_1$ and $\wt S_2$. The surface $\wt S_2$ moves
in a pencil, which contains some singular elements; these are described
in the proof of the following proposition. 

\begin{proposition}
Let $\wt Z_2$ be a modification of $\P^3$ as in the previous
proposition. It contains one pencil of surfaces $\wt S_2$, 
of which the smooth elements are the
blow-up of $\P^2$ in three (possibly infinitely near) collinear
points. Each surface intersects a surface $\wt S_1$, which does not
move in a pencil, and is the blow-up of $\P^2$ in three (possibly
infinitely near) non-collinear points, if non-singular.
\end{proposition}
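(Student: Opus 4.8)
The plan is to read off both surfaces from the explicit description of $\wt Z_2$ given in Proposition~\ref{blowpp} as the modification of $\P^3$
\[
\P^3 \longleftarrow Y_2' \longleftarrow Y_2^+ \lra Z_2 \longleftarrow \wt Z_2,
\]
($Y_2'$ the blow-up of $l_1$; $Y_2^+$ the blow-up of the curve $\{K=Q=0\}=\{\vp=0\}$ on the strict transform of $Q$; then the contraction of the strict transform $Q^+$ of $Q$ along the first ruling; then the small resolution), identifying $\wt S_1$ and $\wt S_2$ with strict transforms of explicit surfaces of $\P^3$. For the pencil $\{\wt S_2\}$ I would take the strict transforms of the planes $H\subset\P^3$ through the fixed line $l_1$. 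This is a pencil, and it is the wanted one: a plane $H\supset l_1$ meets the quadric $Q$ along $l_1$ together with a line $l_2$ of the second ruling, so under the birational identification of $\wt Z_2$ with the modification the pencil of such $H$ corresponds to the pencil of inverse images $\pi^{-1}(l_2)$ of the lines of one ruling of $Q$, which is exactly the one pencil of surfaces of the conic bundle that survives the construction (over $\Delta_2\setminus\Lambda$ the divisor $\cR_2$ is the strict transform of $\{L=0\}$, the member corresponding to the distinguished $l_2$). Establishing this is bookkeeping with the birational maps of \S\ref{subsec:33} and the preceding steps.

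Next I would compute the type of a general member $\wt S_2$ by following the strict transform of a general plane $H\supset l_1$ through the chain. Blowing up $l_1$ leaves it isomorphic to $\P^2$, since $l_1$ is a divisor on $H$. Blowing up $\{\vp=0\}$ on $Q'$ blows up the strict transform at the three points $H\cap\{\vp=0\}$ lying off $l_1$, i.e.\ the points $l_2\cap\{\vp=0\}$, and these lie on the line $l_2\subset H\cong\P^2$; so one obtains $\P^2$ blown up at three collinear points, with infinitely near points in the finitely many cases when $l_2$ is tangent to $\{\vp=0\}$ or passes through a singular point of it. The contraction of $Q^+$ along the first ruling is an isomorphism near the strict transform of a general $H$: from $H\cap Q=l_1+l_2$ and the fact that $l_2$ is a line of the ruling \emph{not} contracted, the strict transform meets $Q^+$ along a section of the contracted fibration, so no curve on it is blown down. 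Finally the small resolution is an isomorphism near the strict transform of a general $H$, since the finitely many singular points of $Z_2$ lie over the singular points of $\{\vp=0\}$ and are missed by a general $H$; a finite subset of members of the pencil is singular. This gives assertions (1) and (2).

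For $\wt S_1$: it is the surface over the fixed line $l_1$, namely the strict transform of the component $R_1$ of $\cR_1$, hence a fixed exceptional-type divisor over a fixed centre and so the unique effective divisor in its class; it therefore does not move in a pencil, and it meets every $\wt S_2$ because each plane $H$ of the pencil contains $l_1$, the intersection being a curve of $\wt S_1$ lying over $l_1$. For its birational type I would first compute it on the Kurke--LeBrun fibre $\wt Z$ over $\Lambda$: the conic bundle $\pi^{-1}(l_1)\to l_1$ has three degenerate fibres over $l_1\cap\{\vp=0\}$, so it is $\P^1\times\P^1$ with three points blown up ($K^2=5$), it carries the disjoint sections $E_1\cap\pi^{-1}(l_1)$ (normal bundle $\co(-2)$, which becomes $\wt C_1$) and $E_2\cap\pi^{-1}(l_1)$ (normal bundle $\co(-1)$, blown down by $\beta$), so $\wt S_1$ on $\wt Z$ is $\P^2$ blown up at three \emph{collinear} points ($K^2=6$, the line being $\wt C_1$). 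Over $\Delta_2\setminus\Lambda$ the curve $\wt C_1$ is no longer contained in the fibre (the $\P^1$-bundle $\wt C_1$ of \S\ref{subsec:33} sits over $\Delta_1$), so the corresponding member of the flat family $\wt\cs_1\to\Pi$ is a smooth surface with $K^2=6$ (deformation invariance) carrying no $(-2)$-curve in general, i.e.\ the honest del Pezzo surface of degree $6$, which is $\P^2$ blown up at the same three points $l_1\cap\{\vp=0\}$, now in non-collinear position; non-generic position of $l_1,l_2$ with respect to $\{\vp=0\}$ and the tangency point $P=l_1\cap l_2$ yields the infinitely near or singular cases.

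The main obstacle is this last point — rigorously showing that $\wt S_1$ over $\Delta_2\setminus\Lambda$ is the \emph{honest} degree-$6$ del Pezzo, i.e.\ that the three base points move into non-collinear position and that $\wt S_1$ has no $(-2)$-curve. I would make it precise by working entirely in the model of Proposition~\ref{blowpp}: identify $\wt S_1$ with the appropriate exceptional divisor of $Y_2^+$ over $l_1$, track it through the flop $\cy^-\dashrightarrow\cy^+$ (which does act over $\Delta_2$, flopping the lines of $F_2^-$, and so genuinely modifies this transform — this is what breaks the collinearity present on the Kurke--LeBrun side), then through the contraction of $Q^+$ and the small resolution, computing $K^2_{\wt S_1}$ and the configuration of $(-1)$- and $(-2)$-curves directly; the delicate part is the intersection-number bookkeeping in the flopped fibre, together with a careful case analysis of when $P\in\{\vp=0\}$ or $l_1$, $l_2$ are special.
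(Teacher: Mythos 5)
Your treatment of the pencil $\wt S_2$ is essentially the paper's own proof: the pencil of planes $H\supset l_1$, their strict transforms blown up at the length-three scheme $l_2'\cap\{K=0\}$ lying on the line $l_2'$, the contraction of $Q^+$ and the small resolution acting as isomorphisms near a general member (resp.\ resolving the points lying over singular points of $K=0$ into infinitely near base points), and the tangency case producing genuinely singular members. That half is correct and matches the paper.

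The gap is in the statement about $\wt S_1$, and you have flagged it yourself. You assert that over $\Delta_2\setminus\Lambda$ the surface is the honest degree-$6$ del Pezzo because it ``carries no $(-2)$-curve in general'', but the only evidence offered is that the particular curve $\wt C_1$ is no longer contained in the fibre; this does not exclude that some other curve takes over the role of the $(-2)$-curve, and deformation invariance of $K^2$ alone cannot distinguish $\P^2$ blown up in three collinear points from $\P^2$ blown up in three non-collinear points. Your proposed repair --- tracking $\wt S_1$ through the flop of the lines of $F_2^-$ --- also looks in the wrong place: by Proposition \ref{blowpp} the effect of the flop on $Y_2$ over $\Delta_2\setminus\Lambda$ is just to contract $F_2^-$ again, and the direct model of that proposition already contains everything needed. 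The paper's argument is a short computation in that model: $\wt S_1$ arises from the exceptional divisor $R_1'\cong\P^1\times\P^1$ of the blow-up of the line $l_1\subset\P^3$; the strict transform $Q'$ meets $R_1'$ in the \emph{diagonal}, a curve of type $(1,1)$ with self-intersection $2$, and the length-three scheme at which $R_1'$ is subsequently blown up (where the curve $K=0$ on $Q'$ meets $R_1'$) lies on this diagonal. Blowing up these three points turns the diagonal into a $(-1)$-curve, which is then contracted when $Q^+$ is blown down along the first ruling. For three distinct points in general position the resulting surface has $K^2=6$ and carries six $(-1)$-curves --- the strict transforms of the two rulings through each of the three points --- forming a hexagon, so it is the del Pezzo surface of degree $6$, i.e.\ $\P^2$ blown up in three non-collinear points. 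The non-collinearity is thus forced by the fact that the three base points lie on a conic (the diagonal of $\P^1\times\P^1$) rather than on a line; this is precisely the geometric input your deformation argument is missing.
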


\begin{proof}
The pencil of planes through the line $l_1\subset \P^3$ gives rise
to a pencil of planes $R_2'$ on $Y_2'$. The blow up introduces the
exceptional surface $R_1'\cong \P^1\times \P^1$. The strict
transform $Q'$ of the quadric intersects $R_1'$ in the diagonal of $R_1'$, and
each surface $R_2'$ in the line $l_2'$ lying in it. The curve $K=0$
on $Q'$ intersects $l_2'$ with multiplicity $3$. 

First suppose that the
intersection consists of three distinct points. Then, the strict
transform $R_2^+$ is the blow-up of $\P^2$ in three collinear
points. The image $S_2$ under the contraction map $Y_2^+\to Z_2$ is
isomorphic to $R_2^+$, and the small resolution does not change it,
as $S_2$ does not pass through any singular point in this case. 

If $l_2'$ is
tangent to the curve $K=0$ on $Q'$ at a smooth point, then $R_2^+$
is the blow-up of $\P^2$ in a rectilinear scheme of length $3$
(supported on one or two points) and therefore has a singularity
which remains under the subsequent rational maps. 
A local model for the simplest situation is that one blows up
$(x,y,z)$-space in the curve $x=z-y^2$, which is tangent to the plane
$z=0$. The strict transform of the plane has an ordinary double point.
It can also be obtained by blowing up the plane in the ideal $(x,y^2)$.

If the line $l_2'$
passes through a singular point of the curve $K=0$ on $Q'$, the
blow-up $R_2^+$ is also singular, but the small resolution $\wt Z_2
\to Z_2$ resolves it, and $\wt S_2$ is the blow-up of $\P^2$ in
three collinear points, some of which are infinitely near, unless
the line is also tangent to a branch of the curve. 

The surface
$R_1'$ is also blown up in a scheme of length three, which lies on
the diagonal. In the next step, the strict transform of the diagonal
is blown down. If the three points are in general position, then it
is easy to see that the surface $S_1$ has six $(-1)$-curves, the
strict transforms of both lines through the three points, which form
a cycle, and the surface is a Del Pezzo surface of degree 6.
Analysing the other possibilities is left to the reader.
\end{proof}

\section{Examples} \label{examples}

\subsection{Twistor spaces}
We specialise to the case of LeBrun twistor spaces, as considered in the
Introduction. So, $K$ is the product of three linear factors, defining
smooth curves of type $(1,1)$ on the quadric. 

Via the embedding 
$(x_0:x_1:x_2:x_3)=(s_0t_0: s_0t_1 : s_1t_0 : s_1t_1)$, the real
structure $(s_0:s_1\semic t_0:t_1)\mapsto (\bar t_0: \bar
t_1\semic\bar s_0: \bar s_1)$ on $Q$  gives rise to a
real structure on our family \eqref{ydef}, given by
\[
(x_0:x_1:x_2:x_3\semic y_0:y_1:y_2\semic  \al1,\al2) \mapsto (\bar x_0:\bar
x_2:\bar x_1:\bar x_3\semic \bar y_0:\bar y_2:\bar y_1\semic
\bar{\alpha}_{2} ,\bar{\alpha}_{1})\;.
\]
Furthermore, we have a real structure on the space of forms 
$L_1L_2L_3$, and the conditions we impose are compatible with the
real structure.
As our constructions are also compatible with the real structure, we get a
real structure on our versal deformation. The fibres over $\Delta_i$
do not occur in the real deformation, which lies over
$(\al1,\al2)=(\alpha,\bar\alpha)$. In particular, 
$\alpha\bar\alpha=|\alpha|^2\neq0$ for $\alpha\neq0 $.

A well-known argument of Donaldson and Friedman 
\cite[proof of Theorem 4.1]{df}, see also \cite[\S 5]{lb2} and
\cite[Proposition 2.1]{ca1}, shows that the fibres over real points of our
versal deformation are indeed twistor spaces, at least in a neighbourhood of
$\Lambda$ in $\Pi$. As a consequence, we obtain another proof of a result of
Honda \cite[Theorem 2.1]{ho4}, which states the existence of degenerate double
solids as twistor spaces, see Example \ref{nonumber} below.


We indicate how the twistor lines are deformed in the general  case. 
By \cite{ku} the general twistor line on a LeBrun twistor space lies above the
intersection of $Q\subset \P^3$ with a real hyperplane, in fact, one
has a whole $S^1$ of twistor lines over the same curve. The curve
can be given by a positive hermitian matrix $\psi$ with determinant
1. It intersects each of the three distinguished planes $L_i$, given
by a positive definite hermitian matrix $\vp_i$, at two points. One
of them has to be lifted to $w_1=0$, the other to $w_2=0$. The
choice is made in the following way. The secular equation $\det
(\la\psi -\vp_i)$ has two real solutions $\la_i>1>\mu_i$ and one can
write $\vp_i=\la_i\psi + f_i(s)\bar f_i(t)=\mu_i\psi - g_i(s)\bar
g_i(t)$ with $f_i$, $g_i$ unique up to a factor from $S^1\subset
\C^*$. Let $(s,t)$ and $(\bar t,\bar s)$ be the two
intersection points. Then, either $f_i(s)=\bar g_i(t)=0$ or $\bar
f_i(t)=g_i(s)=0$ and $\frac{\vp_i}{\psi}(s,\bar s)
-\frac{\vp_i}{\psi}(\bar t,t)=\la_i-\mu_i$ in the first case, while
being equal to $\mu_i-\la_i$ in the second case. One chooses a fixed
sign; by looking at a suitable degeneration, it turns out that one
needs always the negative sign.

We consider a double solid, which is a small real deformation of a LeBrun
twistor space. The same plane in $\P^3$ (assumed to be general) now
contains a smooth quartic branch curve and the twistor lines lie
over the real contact conics in one of the $63$ such systems. We
have to choose the correct one. Each system is determined by one of
six pairs of bitangents. We note that four bitangents are already
given, the intersections of the planes $L_i$, $L_4=L$ with the given
plane. We can write the equation $L_1L_2L_3L_4-Q^2$ in three ways as
symmetric determinantal, like
\[
\begin{vmatrix}L_1L_2& Q \\ Q & L_3L_4
\end{vmatrix}\;.
\]
From this matrix, one gets a system of contact conics, namely $\la^2
L_1L_2+2\la\mu Q+\mu^2L_3L_4$. To find the four other pairs of
bitangents, one computes when the conic degenerates. This gives an
equation of degree $4$ in $\la$ and $\mu$. In this way, we get all
bitangents.

As the double solid degenerates to the conic bundle, the quartic
curve degenerates to the conic section counted twice, with eight
marked points (the intersection of the conic with the four planes
$L_i$), so to a hyperelliptic curve of genus $3$. This
degeneration was already studied by Felix Klein \cite{kl}. The
bitangents degenerate to the 28 lines joining the eight points. Kurke's
construction divides the points into two groups of four. The
diagonals of the quadrangles thus found are the limits of the six pairs of
bitangents in the system of contact conics we are looking for.

Given a double solid, there is no preferred choice of a ruling of $Q$.
But by fixing coordinates as we did, we have made a choice.
Suppose that $\wt Z$ is a double solid twistor space,
occurring in our family. Reversing the roles of the rulings
means flopping all exceptional curves. By degenerating to 
$\al1=\al2=0$, we obtain a small resolution of the isolated
singularities, which does not lie over $\Lambda$.
But we can interchange the rulings by an automorphism of $\P^3$,
to obtain an isomorphic double solid (with in general a
different $KL$), which does occur in our family.  
Seen from another
perspective, given a deformation of a LeBrun twistor space, we can
interchange the rulings in the whole construction and get the same
general fibre (with 13 singular points). The difference is seen in
the small resolution of the 13th singular point.

\begin{proposition}
A double solid twistor space, which is a small deformation of a
LeBrun twistor space, is transformed into a twistor space by
flopping all exceptional curves.
\end{proposition}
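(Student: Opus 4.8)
Let $\wt Z=\wt\cz_{(\alpha,\bar\alpha)}$ be the given double solid twistor space, occurring in a family $\wt\cz\to\Pi$ of our type over a real point with $\alpha\ne0$; it is a small resolution of the quartic double solid $Z$ with branch surface $Q^2-4\al1\al2KL$, carrying the thirteen ordinary double points of \eqref{double}. Write $\wt Z^{\flat}\to Z$ for the small resolution obtained by flopping all the exceptional curves. The plan is to realise $\wt Z^{\flat}$, up to biholomorphism, as a real fibre close to the Kurke--LeBrun locus of a second family of the same kind; the Donaldson--Friedman argument recalled above then shows it to be a twistor space. One cannot argue inside $\wt\cz\to\Pi$ itself: degenerating $\wt Z^{\flat}$ to $\al1=\al2=0$ gives a small resolution of the conic bundle not lying over $\Lambda$, so the Donaldson--Friedman argument does not apply to $\wt Z^{\flat}$ in this family.

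First I would build the second family by interchanging the two rulings of $Q$ with the linear involution $\iota$ of $\P^3$ that preserves $Q=\P^1\times\P^1$ and swaps its rulings (in the coordinates of this section, $x_1\leftrightarrow x_2$). Transporting the construction of $\wt\cz\to\Pi$ by $\iota$, together with the relabelling $y_1\leftrightarrow y_2$, $\al1\leftrightarrow\al2$, produces a family $\wt\cz'\to\Pi'$ which is again an instance of our construction, built now from the data $(K\circ\iota^{-1},L\circ\iota^{-1})$ --- still a tangent plane times a product of three $(1,1)$-forms in the twistor case, so that its fibres over the corresponding locus $\Lambda'$ are again LeBrun twistor spaces. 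Since the real structure fixed in this section already swaps the two $\P^1$-factors, $\iota$ commutes with it; hence $\wt\cz'$ carries a compatible real structure, and by the Donaldson--Friedman argument its real fibres in a neighbourhood of $\Lambda'$ are twistor spaces.

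Next I would identify $\wt Z^{\flat}$ with one of those fibres. The involution $\iota$ restricts to an isomorphism $Z\mapright{\sim}Z'$ of the underlying double solids, where $Z'$ has branch surface $\iota(Q^2-4\al1\al2KL)$. The key point --- precisely the content of the discussion preceding this proposition --- is that interchanging the two rulings reverses the chosen small resolution at every one of the thirteen nodes: the six nodes already present on $W$ are resolved compatibly with the exchanged roles of $E_1$ and $E_2$, and the seven nodes introduced by the plane $L$ are resolved through the birational map $W\dashrightarrow Y$ of Lemma~\ref{lem:birational}, which interchanges under the swap of $\psi_1$ and $\psi_2$. Combining this with the previous paragraph, $\iota$ carries $\wt Z^{\flat}$ onto the fibre of $\wt\cz'$ over the corresponding real point, which is a twistor space; since $\iota$ is a biholomorphism, $\wt Z^{\flat}$ is a twistor space.

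The hard part is this last structural claim: that reversing the roles of the rulings flops \emph{all} the exceptional curves at once, compatibly with the isomorphism $\iota\colon Z\mapright{\sim}Z'$. I would check it by pushing the explicit local models of the small resolutions through $\iota$ node by node --- the charts \eqref{chart}, the two partial resolutions $\wh\cy_1,\wh\cy_2$ of Remark~\ref{rmk-explicit}, and the local pictures of Figures~\ref{figeen} and \ref{figtwee} --- using that $\iota$ exchanges the charts $U_1$ and $U_2$, exchanges $\wh\cy_1$ and $\wh\cy_2$, and reflects each figure, so that the resolution obtained with the roles reversed is exactly the local flop at every node. The remaining verifications are routine bookkeeping or direct appeals to the earlier sections.
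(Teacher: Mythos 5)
Your proposal is correct and follows essentially the same route as the paper: the paper likewise argues that reversing the rulings flops all exceptional curves, that the ruling exchange is realised by an automorphism of $\P^3$ carrying the double solid to an isomorphic one occurring in (an instance of) the same construction over the Kurke--LeBrun locus, and that the Donaldson--Friedman argument then makes the flopped space a twistor space. The paper leaves the claim that the ruling exchange reverses the small resolution at every node at the level of the preceding explicit descriptions, just as you do in flagging it as the step requiring a node-by-node check.
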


We do not know if the double solid can be degenerated in more ways,
maybe due to some additional symmetries. The results of Honda
\cite{ho}, who finds only two small twistor resolutions for spaces
with extra symmetry, indicate that this is not the case.

\begin{example}[Honda's deformation with torus action] 
\label{nonumber}
A LeBrun twistor space with torus action has an equation
$w_1w_2+\vp_1\vp_2\vp_3 w_0^2=0$, where the $\vp_i$ define three conics
with common intersection points. These are real for our real structure
(note that it differs from Honda's \cite{ho2})
if we take $\vp_i=
a_is_0t_0+b_is_1t_1$. The singular points are $(1:0\semic0:1)$ and
$(0:1\semic1:0)$. If we now choose two lines, one in each ruling,
invariant under the $\C^*$-action on the quadric, they have to pass
through the singular points. Invariance under the involution
dictates that we take a degenerate conic in the pencil, either
$s_0t_0$ or $s_1t_1$. Both give an equivariant deformation, in
accordance with Honda's results. We proceed to describe the first
one explicitly. The general fibre is a double solid, the branch quartic of which
has an equation of the form $Q^2+4\al1\al2KL$, where $Q=x_0x_3-x_1x_2$ and
$KL$ is a polynomial of degree $4$ in $x_0$ and $x_3$ only, depending on
$\al1\al2$. We therefore take Equations \eqref{ydef} in the form
\begin{equation}\label{torus}
\begin{array}{rcl}
y_{1}y_{2}+
(a_1x_0+b_1x_3)(a_2x_0+b_2x_3)(a_3x_0+b_3x_3)(x_0+sx_3)y_{0}^{2}&=&0\;,\\
\al2y_{1}+\al1y_{2}-Qy_{0}&=&0\;,
\end{array}
\end{equation}
where $s$ is a function of $\al1\al2$, to be determined.
We now consider $\beta=1/\al1\al2$ and $s$ as independent variables,
and ask when the double solid has an $A_1$-singularity, besides the
two singular points at $(0:1:0:0)$ and $(0:0:1:0)$ of type $\wt
E_7$. The corresponding $A_1$-singularity of the branch quartic lies outside
$Q=0$, so the vanishing of the derivatives w.r.t.\ $x_1$ and $x_2$ gives
$x_1=x_2=0$. The condition for a singular point is then that $\beta
x_0^2x_3^2+4K(x_0,x_3)L(x_0,x_3\semic s)=0$ has a multiple root. 
This means that the discriminant of this binary form has to vanish. This
condition defines a curve  in a $\P^2$
with affine coordinates $(\beta:s:1)$. 
As Zariski observed \cite{zar}, it is a rational sextic curve with 
six cusps (the maximal number),  
being the dual of the rational quartic parametrised by
\[\psi(x_0,x_3)=(x_0^2x_3^2:x_0K:x_3K)\;.\] 
Indeed, the locus $\beta x_0^2x_3^2+4x_{0}K+4sx_{3}K=0$ is the incidence
correspondence between $\P^2$ and its dual, restricted to the curve defined by
$\psi$. Requiring a multiple root means that the line corresponding to the
point $(\beta:4:4s)$ in the dual plane is tangent to the curve given by $\psi$.

The dual curve can be parametrised by
the cross product $\psi_0\times \psi_3$, where we denote the partial
derivative w.r.t.~$x_i$ by the  subscript $i$. We obtain
\[
(\beta:s:1) = \bigl(-4K^2:x_0^2x_3(2K-x_3K_3):x_0x_3^2(2K-x_0K_0)\bigr)\;.
\]
From this formula, we find the 
curve in the $(s, \alpha_{1}\alpha_{2})$-plane which
describes the dependence of $s$ on $\al1\al2$.  
We write $(x_0:x_3)=(t:1)$ and $K(t,1)=A_0+A_1t+A_2t^{2}+A_0t^3$,
so that $A_{3}=a_{1}a_{2}a_{3},
A_{2}=a_{1}a_{2}b_{3}+a_{1}b_{2}a_{3}+b_{1}a_{2}a_{3}$,
$A_{1}=a_{1}b_{2}b_{3}+b_{1}a_{2}b_{3}+b_{1}b_{2}a_{3}$ and
$A_{0}=b_{1}b_{2}b_{3}$.
A straightforward calculation now shows  that 
\[
s=\frac{-t(A_0-A_2t^2-2A_3t^3)}{2A_0+A_{1}t-A_{3}t^{3}}
\qquad\text{ and }\qquad
\alpha_{1}\alpha_{2}=
\frac{-t(2A_0+A_1t-A_3t^3)}{4(A_0+A_1t+A_2t^{2}+A_0t^3)^{2}}\;.
\]
In the $(s,\al1\al2)$-plane this is a curve of degree 10, with 6 cusps,
3 double points and two very singular points at infinity.

For example, if $K=(x_0+x_3)(x_0+2x_3)(x_0+\frac12x_3)$, we obtain
\begin{align*}
s&=-t(2-7t^2-4t^3)/(4+7t-2t^3)\;,\\
\al1\al2&=-t(4+7t-2t^3)/2(1+t)^2(1+2t)^2(2+t)^2\;.
\end{align*}
For $(-15+\sqrt{97})/16<t<0$, we find that $\al1\al2$
is positive, and that the threefold has no real points besides the
extra singular point. For $t=(-15+\sqrt{97})/16$, the rational curve
has a double point, with $(s,\al1\al2)=(\frac14,\frac{16}9)$;
the corresponding double solid has not one, but two extra
double points. There
are two real cusps in this example, one at $t\approx 0.25$ and the other at
$t\approx 3.996$. 
We include a picture of the curve, and an enlarged view around the 
origin in the $(s, \alpha_{1}\alpha_{2})$-plane 
(Figs.\ \ref{figdrei} and \ref{figvier}).
\begin{figure}
\centering
  \begin{tikzpicture}
    \begin{axis}[
      width=54.6mm,
      height=61mm,
      axis lines = middle,
      x axis line style=-,
      axis y line*=right,
      xmin=-129,xmax=0,
      ymin=-0.5,ymax=1.3,
      minor x tick num=4,
      major tick length=2mm,
      ytick = {-0.4,-0.2,0,0.2,0.4,0.6,0.8,1.0,1.2},
      yticklabels = {$-0.001$,,0,,0.001,,0.002,,0.003},
      variable=t,
      ] 
      \addplot [
      domain=2.17:4,
      ]({\FuncX},{400*\FuncY});
      \addplot [
      domain=4:60,
      ]({\FuncX},{400*\FuncY});
    \end{axis}
    \pgfresetboundingbox
    \path[use as bounding box] (0,-5.05) rectangle (2.8,4.5);
  \end{tikzpicture}
  \begin{tikzpicture}
    \begin{axis}[
      width=121mm,
      height=110mm,
      axis lines = middle,
      xmin=-3.5,xmax=5.3,
      ymin=-4.2,ymax=7.7,
      ytick = {-4,...,7},
      x label style={at={(axis description cs:1.0,0.355)},anchor=north},
      y label style={anchor=east},
      xlabel={$s$},
      ylabel={$\alpha_{1}\alpha_{2}$},
      variable=t,
      ] 
      \addplot [
      domain=-4.1:-2.25,
      ]({\FuncX},{\FuncY});
      \addplot [
      domain=-1.76:-1.48,
      ]({\FuncX},{\FuncY});
      \addplot [
      domain=-1.44:-1.24,
      ]({\FuncX},{\FuncY});
      \addplot [
      domain=-0.76:-0.65,
      ]({\FuncX},{\FuncY});
      \addplot [
      domain=-0.646:-0.58,
      ]({\FuncX},{\FuncY});
      \addplot [
      domain=-0.416:1.52,
      ]({\FuncX},{\FuncY});
      \addplot[black,only marks]
      table {
        x    y
        0.25 1.7777777778
      };
    \end{axis} 
  \end{tikzpicture}
\caption{\small The dependence of $s$ on $\alpha_{1}\alpha_{2}$ with a far away
  component at another scale.} 
\label{figdrei}
\end{figure}
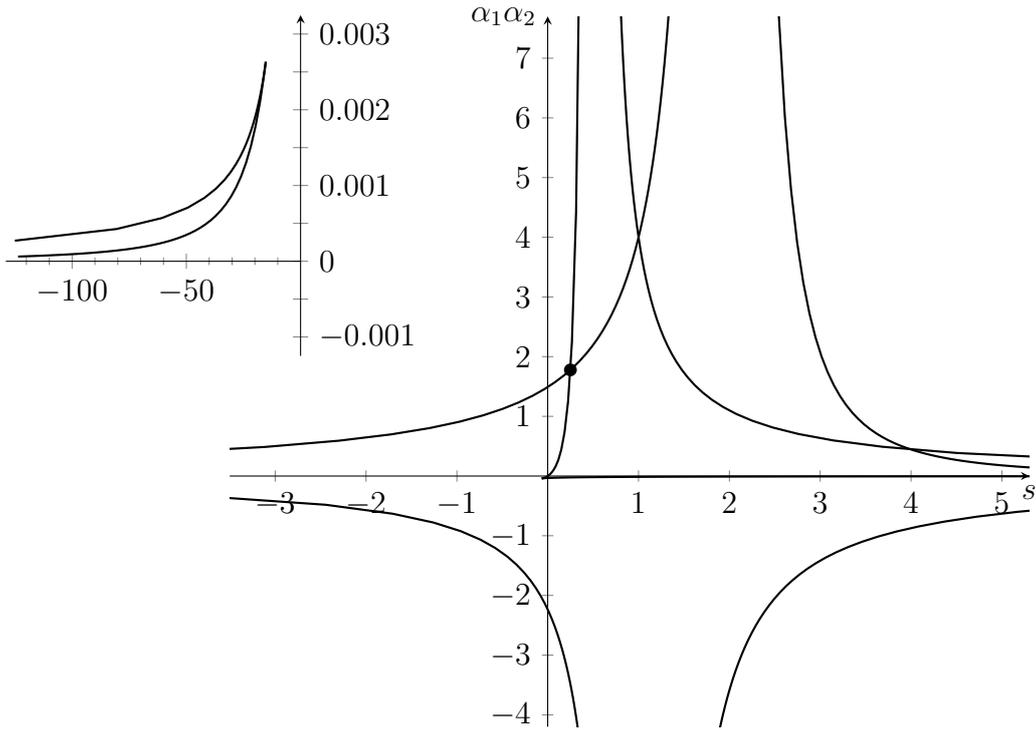

\begin{figure}
    \begin{tikzpicture}
    \begin{axis}[
      width=118mm,
      height=123.9mm,  
      axis lines = middle,
      axis equal image,
      xmin=-0.07,xmax=0.33,
      ymin=-0.08,ymax=0.34,
      xtick = {-0.05,0.05,0.1,0.15,0.2,0.25,0.3},
      xticklabels = {,,0.1,,0.2,,0.3},
      ytick = {-0.05,0.05,0.1,0.15,0.2,0.25,0.3},
      yticklabels = {,,0.1,,0.2,,0.3},
      xlabel={$s$},
      ylabel={$\alpha_{1}\alpha_{2}$},
      variable=t,
      ] 
      \addplot [
      domain=-0.19:0.75,
      ]({\FuncX},{\FuncY});
    \node[right] at (170,280) {$t<0$};
    \node[below] at (320,55)  {$t>0$};
    \end{axis}
  \end{tikzpicture}
\caption{Enlarged view around the origin.}
\label{figvier}
\end{figure}
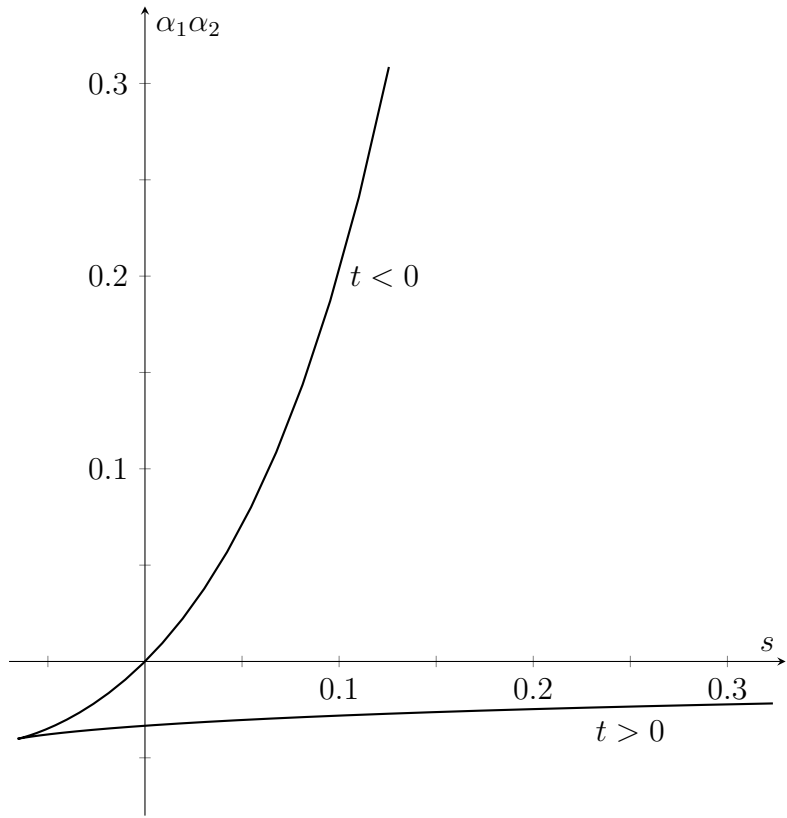

We remark that the simultaneous flop of all singularities is induced
by the global involution permuting $x_1$ and $x_2$, showing that in
this case the two twistor spaces are isomorphic, see also 
\cite[Thm.\ 8.1]{ho}.
\end{example}

\subsection{Quartics with 14 double points}
A quartic, which is given by an equation of the form $Q^2-L_1L_2L_3L_4=0$, in
general, has $12$ double points. 
The condition that it has a $13$th singular point, 
is a rather complicated one, at least in terms
of the coefficients of the planes $L_i$, as can be seen from the
explicit example with additional symmetry above.  

With one additional singular point, the situation becomes much easier.
A quartic surface with 14 double points has six tropes, and its
equation can be written in terms of them in the irrational form
\begin{equation} \label{irrational}
\sqrt{x_1x_2}+\sqrt{y_1y_2}+\sqrt{z_1z_2}=0\;,
\end{equation}
where $x_i, y_i, z_i$ are linear forms defining the tropes, see, 
for example, \cite[No. 55]{hud}, or in rational form as
$(x_1x_2)^2+(y_1y_2)^2+(z_1z_2)^2-2y_1y_2z_1z_2-2x_1x_2z_1z_2-2x_1x_2y_1y_2=0$.
By singling out four planes, we can write it in our preferred form
$(x_1x_2+y_1y_2-z_1z_2)^2-4x_1x_2y_1y_2=0$. The singular points come
in two types, eight being given by $x_i=y_j=z_k=0$, and six by
equations of the type $x_1=x_2=y_1y_2-z_1z_2=0$. If the product
$x_1x_2$ tends to zero, the quartic degenerates to a double quadric.
But this is exactly the quadric in our construction.

We start therefore with the quadric $Q\colon x_0x_3-x_1x_2=0$, parametrised by
$(x_0:x_1:x_2:x_3)=(s_0t_0: s_0t_1 : s_1t_0 : s_1t_1)$. On $Q$, we take a
discriminant curve consisting of two smooth conics and a pair of lines.
Two  lines  intersecting at a point 
$P = (a_0: a_1\semic b_0:b_1)$ on the quadric
are given by the intersection of $Q$ and the tangent plane
$a_1b_1x_0-a_1b_0x_1-a_0b_1x_2+a_0b_0x_3=0$ at the point $P$.
We write $K=K_1K_2L_0$ with $K_1$, $K_2$ general linear forms, and
we take $L_0$ as tangent plane $\la\mu x_0-\la x_1-\mu x_2 +x_3=0$
defined by the point  $(1: \la \semic 1:\mu)$. 
These data define an \KLB manifold. For our construction, 
we add a tangent plane
$L_3=a_1b_1x_0-a_1b_0x_1-a_0b_1x_2+a_0b_0x_3$.
In general, we get 14 double points, as Fig.~\ref{figvijf} illustrates.
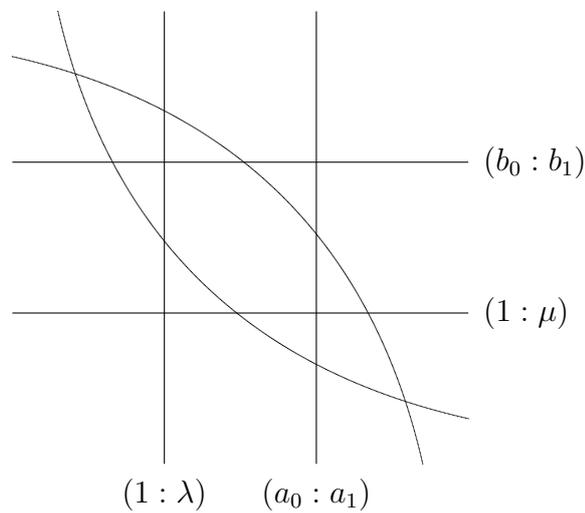
\begin{figure}
\unitlength=2cm
\begin{picture}(3,3.4)(0,-.3)
\put(0,1){\line(1,0){3}} \put(0,2){\line(1,0){3}}
\put(3.1,2){\makebox(0,0)[l]{$(b_0:b_1)$}}
\put(2,-.1){\makebox(0,0)[t]{$(a_0:a_1)$}}
\put(1,-.1){\makebox(0,0)[t]{$(1:\la)$}}
\put(3.1,1){\makebox(0,0)[l]{$(1:\mu)$}}
\put(1,0){\line(0,1){3}} \put(2,0){\line(0,1){3}}
\qbezier(0,2.7)(2.2,2.2)(2.7,0)
\qbezier(3,0.3)(.8,.8)(.3,3)
\end{picture}
\caption{Discriminant curve for a 14-nodal conic bundle.}
\label{figvijf}
\end{figure}
If the two conics are not tangent, as in the picture, we may
normalise the equation, and take 
$K=K_1K_2L_0=(x_0-px_3)(x_1-qx_2)(\la\mu x_0-\la x_1-\mu x_2 +x_3)$.
In general, we can also fix the intersection point of the two
lines, making  $\la=\mu=0$, but if the curve has additional symmetry,
which happens if the intersection point of the lines coincides with
one of the other two points, or if the  lines pass through both points,
then we have to consider a larger family.

After choosing the extra point on the quadric, we have two pairs
of lines. The intersection points of the two lines also
determine tangent planes, $L_1=\la b_1x_0-\la b_0x_1-b_1x_2+b_0x_3$
and $L_2=a_1\mu x_0-a_1x_1-a_0\mu x_2+a_0x_3$. 
The quadric $L_0L_3-L_1L_2=0$ passes through the four lines, so it is
an element of the pencil of quadrics through these lines.
We compute that its equation is, in fact, a multiple of $Q$:
\[
L_0L_3-L_1L_2= (a_1-\la a_0)(b_1-\mu b_0)(x_0x_3-x_1x_2)\;.
\]
So, if the point $P = (a_0: a_1\semic b_0:b_1)$ 
lies on one of the two
lines of $L_0$, the formula breaks down.
Actually, we excluded such cases in the construction
to avoid non-isolated singularities.  Otherwise we can write a
deformation of the form $Q^2-4\al1\al2K_1K_2L_0L_3$, with
$Q=L_0L_3-L_1L_2+\al1\al2 K_1K_2$. In the description of our family, 
we insisted that we kept the quadric unchanged. This can, of course, 
be achieved by a coordinate transformation, as long as the quadric
is non-singular, but it is easier to work with the present formula. 
To get rid of the term $(a_1-\la a_0)(b_1-\mu b_0)$, we replace $\al i$ by
$(a_1-\la a_0)(b_1-\mu b_0)\al i$ and divide. Then, we obtain formulas
which always hold.

We now write down the formula defining a versal deformation of
\KLB manifolds on the given stratum:
\begin{eqnarray*}
y_{1}y_{2}-K_1K_2L_0L_3y_{0}^{2}&=&0\;,\\
\al2y_{1}+\al1y_{2}-Qy_{0}&=&0\;,
\end{eqnarray*}
where
\begin{eqnarray*}
Q&=&x_0x_3-x_1x_2+(a_1-\la a_0)(b_1-\mu b_0)\al1\al2K_1K_2\;,\\
K_1&=&c_{10}x_0+c_{11}x_1+c_{12}x_2+c_{13}x_3\;,\\
K_2&=&c_{20}x_0+c_{21}x_1+c_{22}x_2+c_{23}x_3\;,\\
L_0&=&\la\mu x_0-\la x_1-\mu x_2 +x_3\;,\\
L_1&=&\la b_1x_0-\la b_0x_1-b_1x_2+b_0x_3\;,\\
L_2&=&a_1\mu x_0-a_1x_1-a_0\mu x_2+a_0x_3\;,\\
L_3&=&a_1b_1x_0-a_1b_0x_1-a_0b_1x_2+a_0b_0x_3\;.
\end{eqnarray*}
The general fibre is a double solid with branch surface
$Q^2-4\al1\al2K_1K_2L_0L_3=0$, with in general 14 double points.

If the point  $P = (a_0: a_1\semic b_0:b_1)$ lies on one of the 
two lines of $L_0$, the conic bundle has a singular line. In this case, the
line $L_0=L_3=0$ lies on the quadric $Q$ 
for all $\al1\al2$: if say $a_1=\la a_0$, then the line $L_0=L_3=0$ 
is given by $\la x_0-x_2=\la x_1 - x_3=0$. 
The  line is  then
a singular line of the quartic surface $Q^2-4\al1\al2K_1K_2L_0L_3=0$. So, 
presumably also in this case, our construction goes through, with appropriate
changes, but we have not checked details.

We note that the equations for the double solids only involve the
four products $\al1a_0$, $\al1a_1$, $\al2b_0$ and $\al2b_1$, which
can be considered as coordinates on our versal deformation of the
\KLB manifold. To describe the other fibres, which are not
double solids, and the total family, one needs the choice of the
extra plane, which gives the structure of the product ${\rm Bl}_0
\C^2 \times {\rm Bl}_0 \C^2$ transversal to 
$\Lambda$.

\subsection{Kummer surfaces}  
A similar description  is possible if the
general fibre is a double solid, branched over a Kummer surface.
We shall relate our family to the moduli space of Kummer surfaces.

In this case, there is a finite number of 
\KLB manifolds, which all are  small modifications of a unique
conic bundle. The discriminant of the bundle consists of 
three reducible conics, given as the intersection of the quadric 
with three tangent planes in general position.
The versal deformation has a four-dimensional base space $B$, with   
a $\C^*$-action. The quotient  $B/\C^*$  under the $\C^*$-action  is a
three-dimensional $A_1$-singularity.

A modern presentation of classical results on the 
moduli space of Kummer surfaces with level two structure
 can be found in  \cite[Ch.~3]{hunt}.
The moduli space  in question is nowadays known as Igusa quartic
$\ci_4$. It is a compactification of the Siegel modular threefold
for $\Gamma_2(2)$, and a resolution is the  non-singular Satake 
compactification.  The Igusa quartic is a rational variety,
which is  dual to the 10-nodal Segre cubic $\cs_3$. The duality map
$\cs_3 \dashrightarrow  \ci_4$ blows up the 10 $A_1$ singularities,
but blows down 15 quadric surfaces to singular lines on the quartic.

The Igusa quartic not only parametrises Kummer surfaces, but
also gives a direct construction of the parametrised objects: 
the tangent plane at a (general) point
intersects $\ci_4$ in a 16-nodal surface (15 as intersection
with the singular lines, plus one extra from the point of
tangency), a fact already mentioned by Hudson \cite[\S\ 80]{hud}.
For special sections, corresponding to boundary points of the
Satake compactification, one obtains  Pl\"ucker
surfaces, quartics with a double line and 8 isolated
double points in general (for a description
of these surfaces, see \cite[Art.\ 83]{Jessop}).

Dually, projecting the Segre cubic $\cs_3$ from a general point
onto a linear space gives a birationally equivalent double solid,
branched along a Kummer surface. The $16$ nodes arise from the 10 nodes 
of $\cs_3$ and  from six lines of the cubic lying in the tangent cone
at the projection point. More details can be found in Baker's book
\cite{baker}. The construction degenerates if the projection point
becomes one of the $10$ nodes. A neighbourhood of each of them
is isomorphic to the quotient $B/\C$  alluded to above.
Our family yields deformations of the \KLB manifold, together
with two divisors, specified by giving a point of the quadric $Q$.
We obtain this quadric by blowing up the node of  $B/\C$. We therefore
find a neighbourhood of the quadric $Q$ on a 
common resolution of $\ci_4$ and $\cs_3$ (which is provided by the
Satake compactification).

We proceed to give explicit equations, similar to Equation \eqref{irrational}
above, which will allow us to describe our family with $\C^*$-action.
We follow Baker \cite{baker}. 
We start by parametrising the Segre cubic $\cs_3$. 
The linear system of quadrics through
the five points in $\P^3$ (for which we take
the vertices of the coordinate tetrahedron
and the point $(1:1:1:1)$) defines a birational map to
$\cs_3$. It blows up
the five points, but the $10$ lines connecting the points are blown 
down to the singular points. 

We take as basis of the linear system
\[
\begin{split}
x&=z_1(z_0-z_2)\;, \\
y&=z_2(z_0-z_3)\;, \\
z&=z_3(z_0-z_1)\;, 
\end{split}
\qquad
\begin{split}
x'&=z_2(z_1-z_0)\;, \\
y'&=z_3(z_2-z_0)\;, \\
z'&=z_1(z_3-z_0)\;, 
\end{split}
\]
with coordinates $(z_0 : z_1 :z_2 :z_3)$ on $\P^3$.
In this way, we embed $\cs_3$ in a hyperplane in $\P^5$. The equations 
of the image are
\begin{multline*}
\hfil x+y+z+x'+y'+z'=0\;, \\
xyz+x'y'z'=0\;. \hfil
\end{multline*}
To determine the dual variety $\ci_4$, 
we compute the Jacobian matrix of these
two polynomials. We set $\xi=yz$, $\xi'=y'z'$, and so on. Using the
parametrisation of $\cs_3$, one checks that the following equation
in irrational form holds:
\[
\sqrt{(\eta-\zeta')(\eta'-\zeta)}+\sqrt{(\zeta-\xi')(\zeta'-\xi)}
 +\sqrt{(\xi-\eta')(\xi'-\eta)}=0\;.
\]
Writing $a=\eta-\zeta'$, and so on, we find the Igusa quartic $\ci_4$
embedded in the hyperplane $a+b+c+a'+b'+c'=0$ in
$\P^5$,  with equation in irrational form:
\[
\sqrt{aa'}+\sqrt{bb'}+\sqrt{cc'}=0\;.
\]
We remark that equations with even more symmetry are known, but
they are not suitable for our purpose. 
In the coordinates $(a,b,c,a',b',c')$, the parametrisation
of $\ci_4$ is given by
\[
\begin{split}
a&=(z_3-z_1)z_2(z_0-z_1)(z_0-z_3)\;, \\
a'&=(z_1-z_3)z_1z_3(z_0-z_2)\;,
\end{split}
\]
and the other variables by cyclic permutation (on $(123)$).

The tangent space of $\ci_4$ at the corresponding point
can be computed from the given parametrisation of
the Segre cubic by a suitable coordinate transformation, or directly
from the defining equations. The result is
\begin{multline*}
z_1z_3a+z_2z_1b+z_2z_3c+z_2(z_1+z_3-z_0)a'+z_3(z_2+z_1-z_0)b'
  +z_1(z_3+z_2-z_0)c' =0 \;,\\
a+b+c+a'+b'+c'=0\;.\hfil
\end{multline*}

We now look at a neighbourhood of $1$ of the $10$ quadrics on a 
common resolution of the Segre cubic and the Igusa quartic.
Let $\wt{\cs} \to \cs_3$ the resolution obtained by blowing up the
singular points, so the exceptional divisors are quadric surfaces.
We also get $\wt\cs$ by first blowing up the five  points in 
$\P^3$ (which already gives a small resolution) and then 
blowing up the strict transforms of the $10$ lines.

Specifically, we choose the quadric obtained
by blowing up the strict transform of the line $(z_0:z_1:0:0)$
in $\P^3$.
We first blow up the points $(1:0:0:0)$ and  $(0:1:0:0)$.
In $\P^3\times \P^2$, the blow-up is given by 
\[
\Rank \begin{pmatrix}z_{0}z_1 & z_2 & z_3 \\
                a_{01} & a_2 & a_3 \end{pmatrix}
  \leq 1 \;.
\]
We take the chart $a_{01}=1$. Next, we blow up the strict transform
$a_2=a_3=0$ of the line. We describe it with homogeneous
coordinates, that is, with a $\C^*$-action.
We explain this for  the blow up of the origin in $\C^2$. Consider coordinates
$(u,v,t)$ with action $\la \cdot (u,v,t) = (\la u,\la v,\la^{-1}
t)$. The quotient $\C^3//\C^*$ is just $\C^2$: the invariant
functions $x=ut$ and $y=vt$ are the coordinates. The quotient only
parametrises closed orbits. We take out the $t$-axis,
consisting of the
origin, which lies in the closure of all non-closed orbits,
and a  non-closed orbit:
consider the set $B\colon u=v=0$. 
The quotient $(\C^3\setminus
B)/\C^*$ is the blow-up of $\C^2$.

At this point, we can introduce our four-dimensional base space,
as the other ruling of the quadric can be treated in the same way.
This means also replacing $z_0$ and $z_1$.
We therefore set
\[
\begin{split}
z_0&=\al1 a_0\;, \\
z_1&=\al1 a_1\;,
\end{split}
\qquad
\begin{split}
a_2&=\al2 b_0\;, \\
a_3&=\al2 b_1\;.
\end{split}
\]
The coordinates $(a_0:a_1\semic b_0:b_1\semic \al1,\al2)$
describe the product ${\rm Bl}_0
\C^2 \times {\rm Bl}_0 \C^2$ of blow-ups of the origin in $\C^2$.
We substitute the above values
in the first equation for the tangent space, add a multiple of 
the second equation and divide out
common factors to obtain
\begin{multline*}
 \quad
\al1\al2\bigl(a_1b_0a+a_1b_1b+
(a_1b_0+a_1b_1-\al1\al2a_0a_1b_0b_1)c\\ {}+
(a_0b_0+a_1b_1-\al1\al2a_0a_1b_0b_1)a'
+(a_0b_1+a_1b_0-\al1\al2a_0a_1b_0b_1)b'\bigr)
  +c' =0\;,\quad 
\end{multline*}
from which we conclude that $c'=-\al1\al2c''$ for some expression $c''$
not involving $c'$.
A rational form of the quartic equation for the hyperplane section 
is
\[
(-\al1\al2cc''+aa'-bb')^2+4\al1\al2aa'cc''=0\;.
\]
We can therefore write a formula for the
deformation of the \KLB manifold as
\begin{equation}\label{kummer}
\begin{split}
y_{1}y_{2}+aa'cc''y_{0}^{2}&=0\;,\\
\al2y_{1}+\al1y_{2}-(-\al1\al2cc''+aa'-bb')y_{0}&=0\;,\\
a+b+c+a'+b'-\al1\al2c''&=0\;.
\end{split}
\end{equation}
The variable $c$ can be eliminated, if its coefficient
does not vanish, so certainly in a neighbourhood of the
quadric (for $\al1\al2$ small).
Equations \eqref{kummer} describe therefore a family 
over a neighbourhood of the
exceptional locus of ${\rm Bl}_0 \C^2 \times {\rm Bl}_0 \C^2 $.

From the other nine quadrics on $\ci_4$, only four are visible
in our chart: the strict transforms of the lines 
through $(1:1:1:1)$ and the vertices of the coordinate tetrahedron
in $\P^3$. For the line $z_0-z_3=z_1-z_3=0$, we find the locus
$1-\al1\al2a_1b_1=1-\al1\al2a_0b_1=0$ (so $a_0=a_1$) in our
base space. Using the explicit expression for the last equation of
\eqref{kummer}
\begin{multline*}
\quad
(1-\al1\al2a_1b_0)a+(1-\al1\al2a_1b_1)b+
(1-\al1\al2a_1(b_0+b_1)+\al1^2\al2^2a_0a_1b_0b_1)c\\{}+
(1-\al1\al2a_0b_0)(1-\al1\al2a_1b_1)a'
+(1-\al1\al2a_0b_1)(1-\al1\al2a_1b_0)b' =0 \;,\quad
\end{multline*}
we  obtain that it reduces to $a=0$.
 The Kummer surface degenerates to the
double quadric $(\al1\al2cc''+bb')^2=0$, but the fibre in the family
of Equations \eqref{kummer}
is reducible: $y_1y_2=0$ and 
$\al2y_{1}+\al1y_{2}+(\al1\al2cc''+bb')y_{0}=0$. These are two sections
of the $\P^2$-bundle over $\P^3$, tangent along a quadric.
So, only above the originally chosen quadric, at $\al1=\al2=0$, we
have \KLB manifolds.


\end{document}